\theoremstyle{thmstyleone}%
\newtheorem{theorem}{Theorem}
\newtheorem{lemma}[theorem]{Lemma}%
\newtheorem{corollary}[theorem]{Corollary}%
\theoremstyle{thmstyletwo}%
\newtheorem{remark}{Remark}%
\theoremstyle{thmstylethree}%
\newtheorem{definition}{Definition}%
\newtheorem{notation}{Notation}%
\def\L{\mathcal{L}}
\def\ldl{\mathsf{iK_d}}
\def\LDL{\mathbf{iK_d}}
\def\ldls{\mathsf{iK_{d*}}}
\def\LDLS{\mathbf{iK_{d*}}}
\def\GSTN{\mathbf{STL(N, H)}}
\def\GSTNS{\mathbf{STL(N)}}
\def\LDLSP{\mathbf{iK_{d*}^{+}}}
\def\LDLP{\mathbf{iK_{d}^{+}}}
\def\D{\mathcal{D}}
\def\IH{\mathrm{IH}}
\def\1{\emph{(1)}}
\def\2{\emph{(2)}}
\def\3{\emph{(3)}}
\def\4{\emph{(4)}}
\begin{document}

\title{A Cut-free Sequent Calculus for Basic Intuitionistic Dynamic Topological Logic}

\author[1]{Amirhossein Akbar Tabatabai}
\author[2]{Majid Alizadeh}
\author[2]{Alireza Mahmoudian}
\affil[1]{Bernoulli Institute, University of Groningen}
\affil[2]{School of Mathematics, Statistics and Computer Science, College of Science, University of Tehran}

\date{ }

\maketitle

\begin{abstract}
As part of a broader family of logics, \cite{ImSpace, OGHI} introduced two key logical systems: $\ldl$, which encapsulates the basic logical structure of dynamic topological systems, and $\ldls$, which provides a well-behaved yet sufficiently general framework for an abstract notion of implication. These logics have been thoroughly examined through their algebraic, Kripke-style, and topological semantics.
To complement these investigations with their missing proof-theoretic analysis, this paper introduces a cut-free G3-style sequent calculus for $\ldl$ and $\ldls$. Using these systems, we demonstrate that they satisfy the disjunction property and, more broadly, admit a generalization of Visser's rules. Additionally, we establish that $\ldl$ enjoys the Craig interpolation property and that its sequent system possesses the deductive interpolation property.\\

\noindent \textbf{Keywords:} sequent calculus, cut elimination, disjunction property, Visser's rules, Craig interpolation, deductive interpolation, dynamic topological systems
    
\end{abstract}

\section{Introduction}\label{sec1}

\subsection*{Abstract Implications}

The logical realm encompasses a wide variety of implications, ranging from Boolean and Heyting implications to sub-intuitionistic implications \cite{Vi2, visser1981propositional, Ru}, strict implications in provability logic \cite{visser1981propositional} and preservability logic \cite{Iem1, Iem2, LitViss}, and various types of conditionals \cite{NuteCross}, to name a few.
Efforts have been made to identify an overarching notion of \emph{abstract implication} that unifies some or all of these variants \cite{Celani-Jansana,Lattar}. Here, we adopt the abstract notion of implication proposed in \cite{ImSpace,OGHI}, interpreting implications as the logical mechanism to \emph{internalize} the meta-linguistic provability order $A \vdash B$ between propositions into the proposition $A \to B$ within the formal language itself.\footnote{For a detailed explanation of what we mean by internalization and the justification of the definition of implication, see \cite{ImSpace, OGHI}.}
\begin{definition}\label{dfn: Implication}
Let $\mathsf{A}=(A, \wedge, 1)$ be a bounded meet semi-lattice and $\leq$ be its meet semi-lattice order. A binary order-preserving operation $\to$ from $(A, \leq)^{op} \times (A, \leq)$ to $(A, \leq)$ is called an \emph{implication}, if we have
$a \to a=1$ and
$(a \to b) \wedge (b \to c) \leq a \to c$,
for any $a, b, c \in A$.
\end{definition}
This notion of implication is highly general, and its broad scope proves useful in establishing \emph{negative} results regarding the non-existence of certain types of implication—such as the non-triviality of geometric implications \cite{OnGeometric}. However, compared to Heyting implication, these abstract implications tend to be mathematically ill-behaved \cite{ImSpace, OGHI}. In \cite{ImSpace, OGHI}, it is suggested that the ill-behaved nature of abstract implication stems from the absence of an adjunction—a fundamental feature of Heyting implication. For a comprehensive argument, we refer the reader to \cite{OGHI}. Here, it suffices to emphasize that the adjunction of which Heyting implication is a part is nothing but a complete pair of introduction and elimination rules for the intuitionistic implication. Philosophically, having such a pair means having a complete determination of the proof-theoretic \emph{meaning} of intuitionistic implication, while mathematically, it leads to the elegant proof theory of intuitionistic logic.

To restore this missing adjunction, \cite{ImSpace, OGHI} introduced a new algebraic concept, called a $\nabla$-algebra:
 \begin{definition}\label{dfn:n-alg} 
 A \emph{$\nabla$-algebra} is a structure $(\mathsf{A}, \nabla, \to)$, where $\mathsf{A}$ is a bounded lattice and $\rightarrow$ and $\nabla$ are binary and unary operations on $\mathsf{A}$, respectively, such that:
  \[ 
  \nabla c \wedge a \le b \quad \text{iff} \quad c \le a \rightarrow b \]
 for any $a, b, c \in \mathsf{A}$.
  It follows from this definition that $\nabla$ is monotone and join-preserving. A $\nabla$-algebra is called \emph{normal} if $\nabla$ preserves all \emph{finite} meets. It is called \emph{Heyting}, if $\mathsf{A}$ is a Heyting algebra.
  \end{definition}
$\nabla$-algebras provide a unifying framework that generalizes both bounded lattices ($\nabla a = 0$ and $a \to b = 1$) and Heyting algebras ($\nabla a = a$ and $a \to b = a \supset b$, where $\supset$ denotes the Heyting implication). Additionally, they encompass \emph{temporal Heyting algebras}. To illustrate, in any Heyting $\nabla$-algebra, if we denote the Heyting implication by $\supset$ and define $\Box a$ as $\top \to a$, we obtain the identity $a \to b = \Box (a \supset b)$. Thus, the binary operation $\to$ reduces to the unary operator $\Box$, making a Heyting $\nabla$-algebra effectively a Heyting algebra enriched with the adjunction $\nabla \dashv \Box$. 

This structure naturally lends itself to a temporal interpretation: $\nabla a$ can be understood as ``$a$ was true at some point in the past" while $\Box a$ expresses that ``$a$ will be true at all points in the future." The adjunction $\nabla \dashv \Box$ then corresponds to the well-known relationship between the \emph{existential past} modality and the \emph{universal future} modality. This temporal perspective can be rigorously formulated within the framework of intuitionistic Kripke frames. For the connection with Heyting $\nabla$-algebras, see \cite{OGHI}.

Returning to implications, in \cite{ImSpace, OGHI}, it is proven that in any $\nabla$-algebra, the operator $\to$ is an implication in the sense of Definition \ref{dfn: Implication}, and any abstract implication can be represented as an implication of a normal $\nabla$-algebra, up to a correcting term. Therefore, one can argue that the presence of the adjunction in $\nabla$-algebras ensures a well-behaved family of implications, while the representation theorem guaranties the generality of this well-behaved family. Consequently, \emph{studying normal $\nabla$-algebras offers a mathematical strategy for analyzing all possible implications.} 

\subsection*{Dynamic Topological Systems}

A \emph{dynamic topological system} is a topological space interpreted as the \emph{state space} of a system, augmented with a continuous function that represents the system's \emph{dynamism} \cite{Akin}. To make this non-elementary notion more accessible to logical analysis, several modal approaches have been proposed \cite{artemov1997modal, kremer2004small, Fer}, using either $\mathsf{S4}$ or $\mathsf{IPC}$ to represent the state space, along with some modalities to capture the dynamism. Here, we adopt a basic yet conceptual approach, as outlined below.

First, note that it is beneficial to reformulate the definition of dynamic topological systems in a point-free manner—focusing on open subsets rather than points, since propositions correspond to open sets in a direct manner.
The point-free counterpart of a topological space is a \emph{locale} $\mathscr{X}$, which is a complete lattice (i.e., a lattice that has suprema and infima for all its subsets and is necessarily bounded) where the binary meet operation distributes over arbitrary joins. We can think of this lattice as the lattice of the open subsets of our point-free space. Similarly, the point-free analogue of a continuous function is a \emph{localic map}, which is a function $f^*: \mathscr{Y} \to \mathscr{X}$ between locales that preserves finite meets and arbitrary joins. The localic map  $f^*: \mathscr{Y} \to \mathscr{X}$ can be interpreted as the inverse image of the point-free continuous map $f$ from the point-free space $\mathscr{X}$ to the point-free space $\mathscr{Y}$. Combining these ideas, a \emph{dynamic locale} is a locale $\mathscr{X}$ equipped with a localic map $f^*: \mathscr{X} \to \mathscr{X}$.

This definition is clearly non-elementary, as it refers to arbitrary joins. To make it elementary and hence more logic-friendly, one should replace join-preservation with adjoint operators. Therefore, the fact that binary meet preserves arbitrary joins is transformed into the existence of a right adjoint for the binary meet—a connective known as the Heyting implication. Similarly, the condition that $f^*$ preserves arbitrary joins must be replaced with the right adjoint to $f^*$, denoted by $f_*$. This suggests the following structures as the algebraic and elementary version of a dynamic topological system: $(\mathsf{H}, f^*, f_*)$, where $\mathsf{H}$ is a Heyting algebra, $f^*, f_*: \mathsf{H} \to \mathsf{H}$, with $f^* \dashv f_*$ and $f^*$ preserves all finite meets. This structure is nothing but a normal Heyting $\nabla$-algebra, where $f^*=\nabla$ and $f_*=\Box$. Therefore, \emph{the study of normal Heyting $\nabla$-algebras corresponds to the algebraic study of dynamic topological systems.}

\subsection*{Logics $\ldl$ and $\ldls$}

Motivated by the role of $\nabla$-algebras in representing implications, generalizing (temporal) Heyting algebras, and formalizing dynamic topological systems in an elementary way, \cite{ImSpace,OGHI,OGHII} have studied various algebraic, topological, and logical aspects of $\nabla$-algebras in a series of papers.
In particular, \cite{ImSpace,OGHII} introduced two specific logics, namely the logic $\ldl$ for normal Heyting $\nabla$-algebras and its Heyting implication-free fragment, $\ldls$, for normal $\nabla$-algebras.\footnote{These logics are referred to as $\mathsf{STL(N, H)}$ and $\mathsf{STL(N)}$, respectively in \cite{OGHI}.} The former can be interpreted as the \emph{basic intuitionistic dynamic topological logic}, while the latter serves as a logic for sufficiently general yet well-behaved implications. In \cite{ImSpace,OGHII}, these systems were extensively investigated from algebraic and topological perspectives; however, a proof-theoretic approach remained absent due to the lack of a cut-free sequent calculus for these logics.

In this paper, we aim to fill this gap by introducing the cut-free sequent calculi $\LDL$ and $\LDLS$ for the logics $\ldl$ and $\ldls$, respectively. We then use these systems to demonstrate that both $\ldl$ and $\ldls$ enjoy the disjunction property and, more generally, admit a generalization of Visser's rules. Additionally, we establish that $\ldl$ satisfies the Craig interpolation property and that its sequent system possesses the deductive interpolation property.

The structure of the paper is as follows. In Section \ref{sec: preliminaries}, we recall some preliminary notions and the definition of a sequent system for the logics $\ldl$ and $\ldls$, as presented in \cite{ImSpace}.
In Section \ref{sec: LDL}, we introduce our cut-free sequent systems and investigate their basic properties, including inversion lemmas, the admissibility of contraction, and the deduction theorem. We also prove the equivalence between these systems and those introduced in \cite{ImSpace}, assuming the admissibility of cut in our systems.
In Section \ref{sec: CutAddmissible}, we establish the admissibility of cut as promised. Then, in Section \ref{sec: AdmissibleRules}, we use our cut-free systems to derive the disjunction property and the admissibility of certain generalizations of Visser's rule.
Finally, in Section \ref{sec: Interpolation}, we show that $\ldl$ satisfies the Craig interpolation property and that its sequent system enjoys the deductive interpolation property.\\

\noindent \textbf{Acknowledgments.}
We wish to thank Hiroakira Ono for his helpful suggestions. The first author also gratefully acknowledges the support of the Czech Academy of Sciences (RVO 67985840) and the GA\v{C}R grant 23-04825S.

\section{Preliminaries}\label{sec: preliminaries}

Consider the language $\mathcal{L} = \{\top, \bot, \wedge, \vee, \supset, \to, \nabla\}$, where $\nabla$ is a unary operator, $\supset$ and $\to$ are binary operators, and $\supset$ represents Heyting implication.
We also consider the fragment $\mathcal{L}_* = \mathcal{L} - \{\supset\}$, which excludes Heyting implication.
\emph{$\mathcal{L}$-formulas} and \emph{$\mathcal{L}_*$-formulas}, as well as the corresponding notion of \emph{subformula}, are defined in the usual way. When writing $A \in \mathcal{L}$, we mean that $A$ is an $\mathcal{L}$-formula, and similarly for $\mathcal{L}_*$. Furthermore, unless explicitly stated otherwise, the term ``formula" always refers to an $\mathcal{L}$-formula.
We denote by $V$ the set of all atomic propositions, and by $V(A)$ the set of atoms occurring in the formula $A$.  Additionally, we use the shorthand $\Box A$ to denote $\top \to A$.

Capital Greek letters such as $\Gamma$, $\Sigma$, $\Delta$, $\Pi$, $\Phi$, and $\Lambda$ are used to represent finite multisets of formulas. Capital Roman letters such as $A$, $B$, and $C$ are reserved for individual formulas. Lowercase Roman letters, such as $p$ and $q$, are used for atomic propositions, while $l$, $m$, and $n$ are reserved for natural numbers. Whenever clear from context, we write $A$ to denote the singleton $\{A\}$.
A single comma (``$ , $”) denotes multiset union. Therefore, $\Gamma, A$ represents $\Gamma \cup \{A\}$. To avoid ambiguity, we may explicitly use the symbol $\cup$ when necessary. We write $\Gamma^n$ to denote the $n$-fold union of the multiset $\Gamma$ with itself. For example, $\Gamma, A^2$ stands for $\Gamma \cup \{A, A\}$.
Repeated applications of an operator, such as $\nabla$, on a formula $A$ are denoted by $\nabla^n A$. Similarly, $\nabla^n \Gamma$ represents the multiset $\{\nabla^n A \mid A \in \Gamma\}$. The notation $V(\Gamma)$ refers to $\bigcup_{\gamma \in \Gamma} V(\gamma)$.
Furthermore, $\bigwedge \Gamma$ (resp. $\bigvee \Gamma$) denotes $\bigwedge_{\gamma \in \Gamma} \gamma$ (resp. $\bigvee_{\gamma \in \Gamma} \gamma$). By convention, we set $\bigwedge \varnothing = \top$ and $\bigvee \varnothing = \bot$.

Let $\mathfrak{L} \in \{\mathcal{L}, \mathcal{L}_*\}$. A \emph{sequent over $\mathfrak{L}$} is an expression of the form $S = (\Gamma \Rightarrow \Delta)$, where $\Gamma$ and $\Delta$ are multisets of $\mathfrak{L}$-formulas, and $\Delta$ contains at most one $\mathfrak{L}$-formula. The multiset $\Gamma$ is called \emph{the antecedent} or \emph{the left-side} of $S$, while $\Delta$ is referred to as \emph{the succedent} or \emph{the right-side} of $S$.
A \emph{rule $R$ over $\mathfrak{L}$} is an expression of the form:
\begin{prooftree}
  \AXC{$\{S_i\}_{i \in I}$}
  \RightLabel{\small $R$}
  \UIC{$S$}
\end{prooftree}
where $S_i$'s and $S$ are sequents over $\mathfrak{L}$. The sequents $\{S_i\}_{i \in I}$ are called the \emph{premises}, and the sequent $S$ is referred to as the \emph{conclusion} of $R$.  
By an \emph{axiom}, we mean a rule with no premises, i.e., $I = \varnothing$. Occasionally, we designate a specific formula in the conclusion of a rule as its \emph{principal formula}. This formula is indicated by \uwave{underlining}. An \emph{instance} of a rule is defined in the usual way.

By a \emph{sequent calculus $G$ over $\mathfrak{L}$}, we mean a set of rules over $\mathfrak{L}$. A sequent calculus $G$ is called \emph{analytic} if, in every rule of $G$, the formulas in the premises are subformulas of the formulas in the conclusion. For any set $\mathcal{S} \cup \{S\}$ of sequents, a \emph{proof-tree} for $S$ from the assumptions in $\mathcal{S}$ is defined in the usual way: it is a tree constructed according to the rules in $G$, with the root labeled by $S$ and the leaves labeled by either the axioms of $G$ or sequents from $\mathcal{S}$. Proof-trees are denoted by calligraphic letters such as $\mathcal{D}$ and $\mathcal{D}'$, unless stated otherwise.
The \emph{height} of a proof-tree $\mathcal{D}$ is defined as the length of its longest branch. Note that if $\mathcal{D}$ is an axiom, its height is zero. We denote the height of a proof-tree $\mathcal{D}$ by $h(\mathcal{D})$. Moreover, we write $\mathcal{S} \vdash_h^G S$ to indicate that $S$ has a proof-tree in $G$ from the assumptions in $\mathcal{S}$ with a height of at most $h$. By $\mathcal{S} \vdash_G S$, we mean $\mathcal{S} \vdash_h^G S$ for some $h \in \mathbb{N}$. When $\mathcal{S} = \varnothing$, we denote $\mathcal{S} \vdash_h^G S$ and $\mathcal{S} \vdash_G S$ by  $G \vdash_h S$ and $G \vdash S$, respectively. We say that $G$ proves $S$ if $G \vdash S$. We define the set of all formulas $A$ such that $G \vdash \, \Rightarrow A$ as \emph{the logic of} $G$. 
A rule $R$ is called \emph{admissible} in $G$, if for any instance of $R$, if $G$ proves all the premises of the instance, it also proves its conclusion.

Define the sequent calculus $\GSTN$ over the language $\L$ as the set of rules presented in Figure \ref{GSTN}.
\begin{figure}[h]
\begin{center}
 \begin{tabular}{c c c}
 \AxiomC{}
  \RightLabel{\footnotesize$ Id$}
 \UnaryInfC{$A \Rightarrow A$}
 \DisplayProof 
 &
\small \AxiomC{}
\small \RightLabel{\footnotesize$L \bot$}
\small \UnaryInfC{$ \bot \Rightarrow $}
 \DisplayProof 
&
\small  \AxiomC{}
\small \RightLabel{\footnotesize$R \top$}
\small \UnaryInfC{$ \Rightarrow \top$}
 \DisplayProof
 \\[3ex]
\end{tabular}
 
\begin{tabular}{cc}
\small \AxiomC{$\Gamma \Rightarrow \Delta$}
\small  \RightLabel{\footnotesize$ Lw$}
\small \UnaryInfC{$\Gamma, A \Rightarrow \Delta$}
 \DisplayProof 
 &
\small \AxiomC{$\Gamma \Rightarrow$}
\small \RightLabel{\footnotesize$Rw$}
\small \UnaryInfC{$\Gamma \Rightarrow A $}
 \DisplayProof 
\\[3ex]
 \end{tabular}
\begin{tabular}{cc}
\small  \AxiomC{$\Gamma, A, A \Rightarrow \Delta$}
\small \RightLabel{\footnotesize$Lc$}
\small \UnaryInfC{$\Gamma, A \Rightarrow \Delta$}
 \DisplayProof 
 &
 \small \AxiomC{$\Gamma \Rightarrow A$}
\small \AxiomC{$\Sigma, A \Rightarrow \Delta$}
\small \RightLabel{\footnotesize$cut$} 
 \BinaryInfC{$\Sigma, \Gamma \Rightarrow  \Delta$}
 \DisplayProof
 \\[3ex]
 \end{tabular}
 \begin{tabular}{ccc}
\small \AxiomC{$\Gamma, A \Rightarrow \Delta$}
\small \RightLabel{\footnotesize$L \wedge_1$} 
\small \UnaryInfC{$\Gamma, A \wedge B \Rightarrow \Delta$}
 \DisplayProof 
&
\small \AxiomC{$\Gamma, B \Rightarrow \Delta$}
\small \RightLabel{\footnotesize$L \wedge_2$} 
\small \UnaryInfC{$\Gamma, A \wedge B \Rightarrow \Delta$}
 \DisplayProof
&
\small \AxiomC{$\Gamma \Rightarrow A$}
\small \AxiomC{$\Gamma \Rightarrow B$}
\small \RightLabel{\footnotesize$R \wedge$} 
 \BinaryInfC{$\Gamma \Rightarrow A \wedge B$}
 \DisplayProof
  \\[3ex]
   \end{tabular}
 \begin{tabular}{ccc}
\small \AxiomC{$\Gamma, A \Rightarrow \Delta$}
\small \AxiomC{$\Gamma, B \Rightarrow \Delta$}
\small \RightLabel{\footnotesize$L \vee$} 
 \BinaryInfC{$\Gamma, A \vee B \Rightarrow \Delta$}
 \DisplayProof
 &
\small \AxiomC{$\Gamma \Rightarrow A$}
\small \RightLabel{\footnotesize$R \vee_1$} 
\small \UnaryInfC{$\Gamma \Rightarrow A \vee B$}
 \DisplayProof
&
\small  \AxiomC{$\Gamma \Rightarrow B$}
\small \RightLabel{\footnotesize$R \vee_2$} 
\small \UnaryInfC{$\Gamma \Rightarrow A \vee B$}
 \DisplayProof
 \\[3ex]
\end{tabular}

\begin{tabular}{cc} 
\small \AxiomC{$\Gamma \Rightarrow \Delta$}
\small \RightLabel{\footnotesize$N$} 
\small \UnaryInfC{$\nabla \Gamma \Rightarrow \nabla \Delta$}
 \DisplayProof
 &
\small \AxiomC{$\Gamma \Rightarrow A$}
\small \AxiomC{$\Gamma, B \Rightarrow \Delta$}
\small \RightLabel{\footnotesize$L\!\to$} 
\small \BinaryInfC{$\Gamma, \nabla(A \to B) \Rightarrow \Delta$}
 \DisplayProof

\small \AxiomC{$\nabla \Gamma, A \Rightarrow B$}
\small \RightLabel{\footnotesize$R\!\to$} 
\small \UnaryInfC{$\Gamma \Rightarrow A \to B$}
 \DisplayProof\\[4ex]
\end{tabular}

\begin{tabular}{cc} 
\small \AxiomC{$\Gamma \Rightarrow A$}
\small \AxiomC{$\Gamma, B \Rightarrow \Delta$}
\small \RightLabel{\footnotesize$L\!\supset$} 
\small \BinaryInfC{$\Gamma, A \supset B \Rightarrow \Delta$}
 \DisplayProof
 &
 \small \AxiomC{$\Gamma, A \Rightarrow B$}
\small \RightLabel{\footnotesize$R\!\supset$} 
\small \UnaryInfC{$\Gamma \Rightarrow A \supset B$}
 \DisplayProof
 \\[3ex]
\end{tabular}
\caption{The sequent calculus $\GSTN$}
\label{GSTN}
\end{center}
\end{figure}
Note that the rules $(Id)$, $(L \bot)$, and $(R \top)$ are the axioms of the system. We refer to the rules $(Lw)$, $(Rw)$, $(Lc)$, and $(cut)$ as the \emph{structural rules}, and the rules $(L \wedge_1)$, $(L \wedge_2)$, $(R \wedge)$, $(L \vee)$, $(R \vee_1)$, $(R \vee_2)$, $(L\!\rightarrow)$, $(R\!\rightarrow)$, $(L\!\supset)$, $(R\!\supset)$, and $(N)$ as the \emph{logical rules} of the calculus $\GSTN$. 
The calculus $\GSTNS$ over $\L_*$ is defined similarly to $\GSTN$, except that it does not include the rules $(L\!\supset)$ and $(R\!\supset)$, as $\supset$ is not available in the language $\L_*$. \emph{Basic intuitionistic dynamic topological logic}, denoted by $\ldl$, is the logic of the system $\GSTN$. We also denote the logic of the restricted system $\GSTNS$ by $\ldls$. It is shown in \cite{ImSpace, OGHII} that both $\ldl$ and $\ldls$ are sound and complete with respect to the class of all dynamic topological systems through their canonical interpretation.

The calculus $\GSTNS$, in its generalized substructural form, was first introduced in \cite{ImSpace} and later expanded to $\GSTN$ in \cite{OGHII}, where the corresponding logics were denoted by $\mathsf{STL(N)}$ and $\mathsf{STL(N, H)}$, respectively. The abbreviation $\GSTNS$ (resp. $\GSTN$) stands for Space-Time Logic with Normality (and Heyting implication). In this work, we have chosen to rename the logics as $\ldl$ and $\ldls$ to better emphasize their intuitionistic and dynamic nature, as well as their $K$-style format. 

Note that restricting the language $\mathcal{L}$ to $\mathcal{L}_p = \mathcal{L} - \{\to, \nabla\}$ and using all the rules in Figure \ref{GSTN}, except for $(L\to)$, $(R\to)$, and $(N)$, results in the well-known system $\mathbf{LJ}$. The logic of $\mathbf{LJ}$ is the intuitionistic propositional logic denoted by $\mathsf{IPC}$. Moreover, notice that
in the calculus $\GSTN$, one can simply define the implication $A \to B$ by $\Box (A \supset B)$, effectively reducing the binary operator $\to$ to a unary operator $\Box$. More precisely, let $\mathcal{L}_{\Box}$ be the language obtained by removing the $\to$ operator from $\mathcal{L}$ and adding the unary operator $\Box$, i.e., $\mathcal{L}_{\Box} = \mathcal{L} - \{\to\} \cup \{\Box\}$. Then, define the system $\GSTN_{\Box}$ over the language $\mathcal{L}_{\Box}$ as the calculus consisting of all the rules in Figure \ref{GSTN}, with the modification that the rules $(L\!\to)$ and $(R\!\to)$ are replaced by the following rules:
\vspace{5pt}
\begin{center}
\begin{tabular}{cc} 
\AxiomC{$\Gamma, A \Rightarrow \Delta$}
\RightLabel{\footnotesize$L\Box$} 
\UnaryInfC{$\Gamma, \nabla \Box A \Rightarrow \Delta$}
 \DisplayProof \qquad \quad
&
\AxiomC{$\nabla \Gamma \Rightarrow A$}
\RightLabel{\footnotesize$R\Box$} 
\UnaryInfC{$\Gamma \Rightarrow \Box A$}
 \DisplayProof\\[2ex]
\end{tabular}
\end{center}
By interpreting $A \to B$ as $\Box (A \supset B)$ and $\Box A$ as $\top \to A$, one can easily simulate $\GSTN$ in $\GSTN_{\Box}$ and vice versa. This observation may lead the reader to argue that, to define $\ldl$, it would be preferable to work with the unary operator $\Box$ rather than the binary operator $\to$. While this is indeed a better formalization in cases where the language includes the Heyting implication, the situation is different when working over $\mathcal{L}_*$. In this setting, the operators $\Box$ and $\to$ are not necessarily inter-definable. Since our primary interest lies in implications represented by normal $\nabla$-algebras, we opt to use $\to$ in both languages to maintain a unified and simpler presentation.

\section{A $\nabla$-analytic system for $\ldl$ and $\ldls$} \label{sec: LDL}

The calculus $\GSTN$ is not analytic. Besides the usual issue with the $(cut)$ rule, another problematic rule is $(R\!\rightarrow)$, where the formulas in $\nabla \Gamma$ may not be subformulas of any formula in the conclusion. This extra $\nabla$ in the premise of $(R\!\rightarrow)$ can be addressed by relaxing the analyticity criterion to consider subformulas ``up to some occurrences of $\nabla$", which would still allow proof-theoretic methods to apply. However, as is standard, we still require to eliminate the $(cut)$ rule—an impossible task in this setting.
For example, the sequent $\nabla(p \vee q) \Rightarrow \nabla p \vee \nabla q$ is provable in $\GSTN$ (see Lemma \ref{lem:l-nabla-dist}), yet it has no cut-free proof, given the structure of the rules in $\GSTN$. 

To address this issue, we introduce two cut-free sequent calculi, $\LDL$ and $\LDLS$, and aim to establish that they correspond to the logics $\ldl$ and $\ldls$, respectively. To this end, we will investigate several properties of these calculi, including inversion lemmas and the admissibility of the contraction rule. These results will allow us to demonstrate the admissibility of $(cut)$ in these calculi, thereby proving their equivalence to $\GSTN$ and $\GSTNS$, respectively. Ultimately, this confirms that $\ldl$ and $\ldls$ are the logics of the calculi $\LDL$ and $\LDLS$, respectively.

Define $\LDL$ as the sequent calculus over $\L$ presented in Figure \ref{LDL}.
\begin{figure}[h]
\begin{center}
 \begin{tabular}{c c c}
\footnotesize \AxiomC{}
\footnotesize  \RightLabel{\footnotesize$ Id^p$}
\footnotesize \UnaryInfC{$p \Rightarrow p$}
 \DisplayProof 
 &
\footnotesize \AxiomC{}
\footnotesize \RightLabel{\footnotesize$L \bot$}
\footnotesize \UnaryInfC{$ \uwave{\bot} \Rightarrow $}
 \DisplayProof 
&
\footnotesize \AxiomC{}
\footnotesize \RightLabel{\footnotesize$R \top$}
\footnotesize \UnaryInfC{$ \Rightarrow \uwave{\top}$}
 \DisplayProof
 \\[3ex]
\end{tabular}
 
\begin{tabular}{cc}
\footnotesize \AxiomC{$\Gamma \Rightarrow \Delta$}
\footnotesize \RightLabel{\footnotesize$ LW$}
\footnotesize \UnaryInfC{$\Gamma, \Sigma \Rightarrow \Delta$}
 \DisplayProof 
 &
\footnotesize \AxiomC{$\Gamma \Rightarrow$}
\footnotesize \RightLabel{\footnotesize$Rw$}
\footnotesize \UnaryInfC{$\Gamma \Rightarrow A $}
 \DisplayProof 
\\[4ex]
 \end{tabular}
 \begin{tabular}{cc}
\footnotesize \AxiomC{$\Gamma, \nabla^n A, \nabla^n B \Rightarrow \Delta$}
\footnotesize \RightLabel{\footnotesize$L \wedge^n$} 
\footnotesize \UnaryInfC{$\Gamma, \uwave{\nabla^n(A \wedge B)} \Rightarrow \Delta$}
 \DisplayProof
 &
\footnotesize \AxiomC{$\Gamma \Rightarrow A$}
\footnotesize \AxiomC{$\Gamma \Rightarrow B$}
\footnotesize \RightLabel{\footnotesize$R \wedge$} 
 \BinaryInfC{$\Gamma \Rightarrow \uwave{A \wedge B}$}
 \DisplayProof
  \\[5ex]
\end{tabular}

\begin{tabular}{ccc}
\footnotesize \AxiomC{$\Gamma, \nabla^n A \Rightarrow \Delta$}
\footnotesize \AxiomC{$\Gamma, \nabla^n B \Rightarrow \Delta$}
\footnotesize \RightLabel{\footnotesize$L \vee^n$} 
 \BinaryInfC{$\Gamma, \uwave{\nabla^n(A \vee B)} \Rightarrow \Delta$}
 \DisplayProof
 &
\footnotesize \AxiomC{$\Gamma \Rightarrow A$}
\footnotesize \RightLabel{\footnotesize$R \vee_1$} 
\footnotesize \UnaryInfC{$\Gamma \Rightarrow \uwave{A \vee B}$}
 \DisplayProof
&
\footnotesize \AxiomC{$\Gamma \Rightarrow B$}
\footnotesize \RightLabel{\footnotesize$R \vee_2$} 
\footnotesize \UnaryInfC{$\Gamma \Rightarrow \uwave{A \vee B}$}
 \DisplayProof
 \\[5ex]
\end{tabular}

\begin{tabular}{cc} 
\footnotesize \AxiomC{$\Gamma, \nabla^{n+1}(A \to B) \Rightarrow \nabla^n A$}
\footnotesize \AxiomC{$\Gamma, \nabla^{n+1}(A \to B), \nabla^n B \Rightarrow \Delta$}
\footnotesize \RightLabel{\footnotesize$L\!\to^n$} 
\footnotesize \BinaryInfC{$\Gamma, \uwave{\nabla^{n+1}(A \to B)} \Rightarrow \Delta$}
 \DisplayProof 
& 
\footnotesize \AxiomC{$\nabla \Gamma, A \Rightarrow B$}
\footnotesize \RightLabel{\footnotesize$R\!\to$} 
\footnotesize \UnaryInfC{$\Gamma \Rightarrow \uwave{A \to B}$}
 \DisplayProof\\[5ex]
\end{tabular}

\begin{tabular}{cc} 
\footnotesize \AxiomC{$\Gamma, \nabla^n(A \supset B) \Rightarrow \nabla^n A$}
\footnotesize \AxiomC{$\Gamma, \nabla^n B \Rightarrow \Delta$}
\footnotesize \RightLabel{\footnotesize$L\!\supset^n$} 
\footnotesize \BinaryInfC{$\Gamma, \uwave{\nabla^n(A \supset B)} \Rightarrow \Delta$}
 \DisplayProof
 &
\footnotesize \AxiomC{$\Gamma, A \Rightarrow B$}
\footnotesize \RightLabel{\footnotesize$R\!\supset$} 
\footnotesize \UnaryInfC{$\Gamma \Rightarrow \uwave{A \supset B}$}
 \DisplayProof
 \\[5ex]
\end{tabular}

\begin{tabular}{c} 
\footnotesize \AxiomC{$\Gamma \Rightarrow \Delta$}
\footnotesize \RightLabel{\footnotesize$N$} 
\footnotesize \UnaryInfC{$\nabla \Gamma \Rightarrow \nabla \Delta$}
 \DisplayProof\\[3ex]
\end{tabular}
\caption{The sequent calculus $\LDL$. In the axiom $(Id^p)$, the letter $p$ is an atom. }
\label{LDL}
\end{center}
\end{figure}
Note that the rules $(Id^p)$, $(L \bot)$, and $(R \top)$ are the axioms of $\LDL$. We refer to the rules $(L \wedge^n)$, $(R \wedge)$, $(L \vee^n)$, $(R \vee_1)$, $(R \vee_2)$, $(L \!\rightarrow^n)$, $(R\! \rightarrow)$, $(L \!\supset^n)$, and $(R\!\supset)$ as the \emph{logical rules}, and $(LW)$ and $(Rw)$ as the \emph{structural rules} of the calculus $\LDL$. Note that if a rule does not have an underlined formula, it does not have a principal formula. Therefore, not all rules have a principal formula.
We also define $\LDLS$ over the language $\L_*$ as the sequent calculus consisting of all the axioms and rules in Figure \ref{LDL}, except the rules $(L\!\supset^n)$ and $(R\!\supset)$. Recall that the connective $\supset$ does not appear in the language $\L_*$.

\begin{remark}\label{rem:ldl-gstn}
Observe the distinctions between $\GSTN$ and $\LDL$. In $\LDL$, the $(cut)$ and $(Lc)$ rules are omitted; its $(Id^p)$ axiom is a simplified version of $(Id)$, restricted to atomic formulas, and its $(LW)$ rule allows introducing multiple formulas in the conclusion—a process achievable in $\GSTN$ through repeated applications of $(Lw)$.
Furthermore, $\LDL$ differs in the rules $(L \wedge^n)$, $(L \vee^n)$, $(L \!\rightarrow^n)$, and $(L \!\supset^n)$, which permit an arbitrary number of $\nabla$'s on the principal formulas, distributing them across immediate subformulas in the premises. The form of these premises also differs slightly. For instance, the rule $(L \wedge^0)$ includes both $A$ and $B$ in its premise, while in $\GSTN$, $(L \wedge_1)$ and $(L \wedge_2)$ each involve only one of them. However, by employing $(LW)$, the rules $(L \wedge_1)$, $(L \wedge_2)$, $(L\vee)$, $(L \!\rightarrow)$, and $(L \!\supset)$ in $\GSTN$ are readily derivable in $\LDL$.
Additionally, note that the rules $(R \wedge)$, $(R \vee)$, and $(R \supset)$, along with $(L \wedge^0)$, $(L \vee^0)$, and $(L \!\supset^0)$, correspond to the standard propositional rules in the system $\mathbf{G3i}$ as defined in \cite{troelstra2000basic}.
\end{remark}

\subsection{Some Properties of $\LDL$ and $\LDLS$}
In this subsection, we prove some fundamental properties of the systems $\LDL$ and $\LDLS$, as outlined earlier. To begin, we demonstrate the provability of some useful sequents within these systems, which will be required for subsequent discussions.

\begin{lemma}\label{lem:id-adm}
Let $\Gamma$, $\Sigma$ and $\Delta$ be multisets of formulas and $A$ and $B$ be formulas. Then:
  \begin{description}
     \item[$(i)$]
     $\LDL \vdash A \Rightarrow A$.
    \item[$(ii)$]
    $\LDL \vdash A , \nabla (A \rightarrow B) \Rightarrow B$.
   \item[$(iii)$] 
    $\Gamma, A \Rightarrow \Delta \vdash_{\LDL} \Gamma, \nabla \Box A \Rightarrow \Delta$.
     \item[$(iv)$] 
    $\Gamma \Rightarrow A \vdash_{\LDL} \Box \Gamma \Rightarrow \Box A$.
     \item[$(v)$] 
     $\nabla \Gamma, \Sigma, A \Rightarrow B \vdash_{\LDL} \Gamma, \Box \Sigma \Rightarrow A \rightarrow B$.
  \end{description}
  All claims also hold for the language $\L_*$ and the system $\LDLS$.
\end{lemma}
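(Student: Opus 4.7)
The plan is to prove (i) by induction on the complexity of $A$, and then derive (ii)--(v) by direct applications of the $\LDL$-rules, using (i) freely and iterating (iii) in (iv) and (v).

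For (i), the base cases $A = p$, $A = \top$, $A = \bot$ follow from $(Id^p)$, $(R\top) + (LW)$, and $(L\bot) + (Rw)$ respectively. The case $A = \nabla B$ is handled by applying $(N)$ to the induction hypothesis $B \Rightarrow B$. The binary cases $A = B \wedge C$, $A = B \vee C$, and $A = B \supset C$ follow the usual G3-style pattern: combine the two induction hypotheses via the right-introduction rule, then close with the corresponding left-rule at $n = 0$, inserting $(LW)$ as needed. The case $A = B \to C$ is the most delicate: apply $(R\!\to)$ to reduce the goal to $\nabla(B \to C), B \Rightarrow C$, then apply $(L\!\to^n)$ with $n = 0$ and principal formula $\nabla(B \to C)$; the two resulting premises $B, \nabla(B \to C) \Rightarrow B$ and $B, \nabla(B \to C), C \Rightarrow C$ are immediate from the induction hypotheses on $B$ and $C$ together with $(LW)$.

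For (ii), apply $(L\!\to^n)$ with $n = 0$ directly to the goal $A, \nabla(A \to B) \Rightarrow B$; both premises reduce to (i) and $(LW)$. For (iii), apply $(L\!\to^n)$ with $n = 0$, taking the two subformulas of the implication to be $\top$ and $A$ respectively, so that the principal formula is $\nabla(\top \to A) = \nabla \Box A$; the first premise $\Gamma, \nabla \Box A \Rightarrow \top$ follows from $(R\top) + (LW)$, and the second premise $\Gamma, \nabla \Box A, A \Rightarrow \Delta$ is the hypothesis augmented by $(LW)$.

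For (iv), apply $(R\!\to)$ to reduce $\Box \Gamma \Rightarrow \top \to A$ to $\nabla \Box \Gamma, \top \Rightarrow A$; starting from the hypothesis $\Gamma \Rightarrow A$, iterate (iii) once per element of $\Gamma$ to replace each $\gamma$ by $\nabla \Box \gamma$, then use $(LW)$ to insert $\top$. For (v), analogously, apply $(R\!\to)$ to reduce $\Gamma, \Box \Sigma \Rightarrow A \to B$ to $\nabla \Gamma, \nabla \Box \Sigma, A \Rightarrow B$, and iterate (iii) once per element of $\Sigma$ on the hypothesis $\nabla \Gamma, \Sigma, A \Rightarrow B$. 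The main obstacle, such as it is, is the $A \to B$ subcase of (i): unlike the other logical cases, the premise of $(R\!\to)$ inserts a $\nabla$ in the antecedent, and one must notice that this extra $\nabla$ can be absorbed by invoking $(L\!\to^n)$ at exactly $n = 0$. Everything else is routine bookkeeping with $(LW)$ and the G3-style rules. The $\L_*$/$\LDLS$ version needs no new ideas, since the $\supset$-subcase of (i) never arises in $\L_*$ and the remaining arguments use only rules shared by $\LDL$ and $\LDLS$.
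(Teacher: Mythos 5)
Your proposal is correct and follows essentially the same route as the paper: induction on $A$ for (i) with the $(R\!\to)$-then-$(L\!\to^0)$ combination for the implication case, $(L\!\to^0)$ with $\top$ as first component for (iii), and iterated applications of (iii) followed by $(R\!\to)$ for (iv) and (v). The only difference is presentational (you describe the derivations top-down rather than displaying the proof-trees), and the appeal to conservativity for the $\LDLS$ version is likewise consistent with the paper.
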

\begin{proof}
	We only prove the case for $\LDL$. The other is identical. For $(i)$, we proceed by induction on the structure of the formula $A$. If $A$ is an atom, $\bot$, or $\top$, we obtain $(A \Rightarrow A)$ by using the axioms $(Id^p)$, $(L\bot)$, and $(R\top)$, along with the rules $(LW)$ or $(Rw)$ of $\LDL$. If $A = B \circ C$, where $\circ \in \{\wedge, \vee, \supset\}$, the claim follows directly from the induction hypothesis for $B$ and $C$. We will provide a detailed explanation of the proof for the cases $A = B \rightarrow C$ and $A = \nabla B$.

  For $A = B \rightarrow C$, let $\D_B$ and $\D_C$ be the proof-trees obtained from the induction hypothesis for $B$ and $C$, respectively. We now consider the following proof-tree:
\begin{prooftree}
  \AXC{$\D_B$} \noLine
  \UIC{$B \Rightarrow B$} \RightLabel{$LW$}
  \UIC{$\nabla (B \rightarrow C), B \Rightarrow B$}
  \AXC{$\D_C$} \noLine
  \UIC{$C \Rightarrow C$} \RightLabel{$LW$}
  \UIC{$\nabla (B \rightarrow C), C \Rightarrow C$}
  \RightLabel{$L \!\rightarrow^0$}
  \BIC{$\nabla (B \rightarrow C), B \Rightarrow C$}
  \RightLabel{$R \!\rightarrow$}
  \UIC{$B \rightarrow C \Rightarrow B \rightarrow C$}
\end{prooftree}

For $A = \nabla B$, we can prove $(\nabla B \Rightarrow \nabla B)$ by applying the rule $(N)$ to the proof of $(B \Rightarrow B)$ obtained from the induction hypothesis.

 For $(ii)$, consider the following proof-tree:
    \begin{prooftree}
      \AXC{}
      \RightLabel{$(i)$}
      \UIC{$A \Rightarrow A$}
      \RightLabel{$LW$}
      \UIC{$A, \nabla (A \rightarrow B) \Rightarrow A$}
    
      \AXC{}
      \RightLabel{$(i)$}
      \UIC{$B \Rightarrow B$}
      \RightLabel{$LW$}
      \UIC{$A, \nabla (A \rightarrow B), B \Rightarrow B$}
    
      \RightLabel{$L\!\rightarrow^0$}
      \BIC{$A , \nabla (A \rightarrow B) \Rightarrow B$}
    \end{prooftree} 
where $(i)$ means using part $(i)$. For $(iii)$, consider the following proof-tree:
 \begin{prooftree}
 \AXC{$ $}
  \RightLabel{$R\top$}
    \UIC{$\Rightarrow \top$}
    \RightLabel{$LW$}
    \UIC{$\Gamma, \nabla \Box A \Rightarrow \top$}
    \AXC{$\Gamma, A \Rightarrow \Delta$}
    \RightLabel{$LW$}
    \UIC{$\Gamma, \nabla \Box A, A \Rightarrow \Delta$}
    \RightLabel{$L\!\rightarrow^0$}
    \BIC{$\Gamma, \nabla \Box A \Rightarrow \Delta$}
  \end{prooftree}
For $(iv)$, consider the following proof-tree:
\begin{prooftree}
        \AXC{$\Gamma \Rightarrow A$}
        \RightLabel{$(iii)$} \doubleLine
        \UIC{$\nabla \Box \Gamma \Rightarrow A$}
        \RightLabel{$LW$}
        \UIC{$\nabla \Box \Gamma, \top \Rightarrow A$}
        \RightLabel{$R \!\rightarrow$}
        \UIC{$\Box \Gamma \Rightarrow \Box A$}
      \end{prooftree}
where by $(iii)$, we mean repeated applications of part $(iii)$.  
Finally, for $(v)$, consider the following proof-tree:
\begin{prooftree}
        \AXC{$\nabla \Gamma, \Sigma, A \Rightarrow B$}
        \RightLabel{$(iii)$} \doubleLine
        \UIC{$\nabla \Gamma, \nabla \Box \Sigma, A \Rightarrow B$}
        \RightLabel{$R\!\rightarrow$}
        \UIC{$\Gamma, \Box \Sigma \Rightarrow A \rightarrow B$}
      \end{prooftree}
where again $(iii)$ means repeated applications of part $(iii)$.
\end{proof}

The next property to consider is \emph{$\nabla$-analyticity}.
The systems $\LDL$ and $\LDLS$ are not analytic. For example, in the premise of the rule $(R\!\rightarrow)$, the formulas in $\nabla \Gamma$ are not necessarily subformulas of any formula in the conclusion. Similarly, the formulas $\nabla^n A$ and $\nabla^n B$ in the rule $(L \wedge^n)$ are not necessarily subformulas of the conclusion, if $n \geq 1$. 
However, we can describe these systems as \emph{$\nabla$-analytic}, i.e., in every premise, each formula is of the form $\nabla^n A$ for some $n \geq 0$, where $A$ is a subformula of a formula in the conclusion. In other words, every formula in any premise is a subformula of a formula in the conclusion, up to additional $\nabla$ operators. 

$\nabla$-analyticity captures nearly all of the power associated with analyticity. For instance, a direct implication of the $\nabla$-analyticity of $\LDL$ is the \emph{conservativity} of the calculus $\LDL$ over the calculus $\LDLS$:

\begin{lemma}\label{obs:ldl-cons-ext}
$\LDLS \vdash_h \Gamma \Rightarrow \Delta$ iff $\LDL \vdash_h \Gamma \Rightarrow \Delta$, for any sequent $\Gamma \Rightarrow \Delta$ over $\L_*$.
\end{lemma}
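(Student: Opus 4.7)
The forward direction is immediate: every rule of $\LDLS$ is literally a rule of $\LDL$ (the two systems differ only in that $\LDL$ has the extra rules $(L\!\supset^n)$ and $(R\!\supset)$), so any $\LDLS$-derivation of height $h$ is already an $\LDL$-derivation of height $h$. The real content is the converse, and my plan is to exploit the $\nabla$-analyticity of $\LDL$ just stated in the paragraph before the lemma.

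The proof will proceed by induction on the height $h$ of the given $\LDL$-derivation of $\Gamma \Rightarrow \Delta$, where $\Gamma,\Delta$ are built from $\L_*$. For $h = 0$ the derivation is an instance of $(Id^p)$, $(L\bot)$, or $(R\top)$, each of which is an axiom of $\LDLS$ as well. For the inductive step, I inspect the last rule $R$ applied. The key observation is that the principal formulas of $(L\!\supset^n)$ and $(R\!\supset)$ are $\nabla^n(A\supset B)$ and $A\supset B$ respectively, and both contain an occurrence of $\supset$. Since the conclusion $\Gamma \Rightarrow \Delta$ lies in $\L_*$, it contains no $\supset$, so $R$ cannot be either of these two rules; hence $R$ is already a rule of $\LDLS$.

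It remains to check that the premises of $R$ are themselves $\L_*$-sequents, so that the induction hypothesis applies to each of them at height $\le h-1$. This is exactly the content of $\nabla$-analyticity: every formula appearing in any premise of any rule of $\LDL$ has the shape $\nabla^m C$, where $C$ is a subformula of a formula occurring in the conclusion. A direct inspection of the rules in Figure~\ref{LDL} confirms this (the only mildly non-obvious case is $(R\!\to)$, where the premise $\nabla\Gamma, A \Rightarrow B$ contains $\nabla\Gamma$, but each such formula is indeed a subformula-up-to-$\nabla$ of a conclusion formula). Since the class of $\L_*$-formulas is closed under taking subformulas and under prefixing by $\nabla$, every formula in every premise of $R$ belongs to $\L_*$. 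Applying the induction hypothesis to each premise and then re-applying $R$, which now lies in $\LDLS$, produces an $\LDLS$-derivation of $\Gamma \Rightarrow \Delta$ of height at most $h$.

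The only potential obstacle is making sure $\nabla$-analyticity is not accidentally broken by some rule I have overlooked; since the rules are few and explicit this is a finite check rather than a conceptual difficulty. No height-preserving transformations or cut-elimination-style surgery are needed — the argument is essentially the observation that a $\nabla$-analytic derivation of an $\L_*$-sequent must be entirely expressible in $\L_*$, and therefore cannot use any rule whose principal formula falls outside $\L_*$.
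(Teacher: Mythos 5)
Your proof is correct and follows essentially the same route as the paper: the forward direction is immediate from the inclusion of the rule sets, and the converse is the $\nabla$-analyticity argument showing that no formula containing $\supset$ (and hence no instance of $(L\!\supset^n)$ or $(R\!\supset)$) can occur anywhere in a derivation of an $\L_*$-sequent. The paper states this more tersely without the explicit height induction, but your formalization—including the observation that $\L_*$ is closed under subformulas and $\nabla$-prefixing, which is what makes the induction go through height-preservingly—is exactly the content the paper is appealing to.
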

\begin{proof}
The left-to-right direction is straightforward, as $\LDL$ includes $\LDLS$. For the converse, we leverage the $\nabla$-analyticity of $\LDL$. Since the sequent $\Gamma \Rightarrow \Delta$ contains no occurrence of the connective $\supset$, this connective cannot appear anywhere in its proof. Consequently, the proof does not involve the rules $(L \!\supset^n)$ or $(R \!\supset)$. As a result, the derivation remains entirely within the system $\LDLS$, where these rules are not present.
\end{proof}

By applying Lemma \ref{obs:ldl-cons-ext}, many properties can be established by first proving them for $\LDL$, after which the conservativity of $\LDL$ over $\LDLS$ ensures their validity in $\LDLS$ as well. This approach will be utilized repeatedly in the remainder of this section.

\begin{lemma}\label{lem:arrow-bot}
The following hold:
\begin{description}
    \item[$(i)$]
    $\LDL \vdash \, \Rightarrow A \rightarrow B$ if and only if $\LDL \vdash A \Rightarrow B$, for any $A, B \in \mathcal{L}$.
    \item[$(ii)$] 
    $\LDL \not \vdash \{ A_i \to B_i \}_{i \in I} \Rightarrow \,$, for any finite set $I$ and any family $\{A_i, B_i\}_{i \in I} \subseteq \mathcal{L}$. 
\end{description}
The same also holds for $\LDLS$.
\end{lemma}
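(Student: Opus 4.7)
The plan is to prove $(ii)$ first, since the $I = \varnothing$ case of $(ii)$ is needed in the harder direction of $(i)$. Both parts proceed by induction on the height of a hypothetical proof and rest on the same structural observation: every principal formula of every left rule of $\LDL$ has outermost symbol in $\{\bot, \wedge, \vee, \supset, \nabla\}$ rather than $\to$, and among the right rules only $(R\!\to)$ can produce a succedent whose outermost operator is $\to$.

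For $(ii)$, I would suppose $\LDL \vdash_h \{A_i \to B_i\}_{i \in I} \Rightarrow\,$ and rule out every possible last rule. The axioms fail because their conclusions either have non-empty succedents or require $\bot$ on the left. All right rules and $(Rw)$ produce a non-empty succedent. Rule $(N)$ would force every antecedent formula to begin with $\nabla$, but each $A_i \to B_i$ begins with $\to$. Each left logical rule would require a principal formula beginning with $\nabla$ (for $n \geq 1$, and always in the case of $(L\!\to^n)$, whose principal formula is $\nabla^{n+1}(A \to B)$) or with one of $\wedge, \vee, \supset$ (for $(L\wedge^0)$, $(L\vee^0)$, $(L\!\supset^0)$), none of which matches $A_i \to B_i$. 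Only $(LW)$ remains; its premise is $\{A_j \to B_j\}_{j \in J} \Rightarrow\,$ for some $J \subseteq I$, contradicting the induction hypothesis at strictly smaller height.

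For $(i)$, the right-to-left direction is a single application of $(R\!\to)$ with $\Gamma = \varnothing$, using that $\nabla \varnothing, A \Rightarrow B$ is literally $A \Rightarrow B$. For the converse, I would induct on the height of a proof of $\,\Rightarrow A \to B$. The axioms cannot conclude this sequent since $A \to B$ is not $\top$ and the antecedent is empty; right rules other than $(R\!\to)$ cannot, because their succedents have a different outermost operator; rule $(N)$ cannot, because $A \to B$ does not begin with $\nabla$; and all left logical rules cannot, because the antecedent is empty. Rule $(Rw)$ would require $\,\Rightarrow\,$ to be derivable, which is excluded by $(ii)$ with $I = \varnothing$. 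This leaves $(LW)$ (necessarily with $\Sigma = \varnothing$, reducing by the induction hypothesis to a shorter proof of the same sequent) and $(R\!\to)$ (whose premise is exactly $A \Rightarrow B$, as required). The corresponding statements for $\LDLS$ follow immediately from Lemma~\ref{obs:ldl-cons-ext}, since every sequent appearing in the arguments lies in $\L_*$.

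The main delicacy is the case analysis in $(ii)$: one must recall that the principal formula of $(L\!\to^n)$ is $\nabla^{n+1}(A \to B)$ and therefore always begins with $\nabla$, so a bare $\to$-formula at the top of the antecedent can never be principal in this rule. Once this point is in hand, everything else is routine case-checking.
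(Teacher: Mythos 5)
Your proposal is correct and follows essentially the same route as the paper: both arguments proceed by inspecting which rules of $\LDL$ can end a proof of the given sequent, with an induction on height for part \emph{(ii)} and a reduction of \emph{(i)} to the premise of $(R\!\to)$, followed by an appeal to Lemma \ref{obs:ldl-cons-ext} for $\LDLS$. Your version is in fact slightly more careful than the paper's, since you explicitly dispatch the degenerate $(LW)$ instance and use \emph{(ii)} with $I = \varnothing$ to rule out $(Rw)$ in the forward direction of \emph{(i)}, steps the paper leaves to an ``easy inspection.''
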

\begin{proof}
We only prove the case for $\LDL$. The other is a consequence of Lemma \ref{obs:ldl-cons-ext}. For $(i)$, if $A \Rightarrow B$ is provable, proving $\Rightarrow A \to B$ is straightforward by applying the rule $(R\!\to)$. For the converse, if $\Rightarrow A \to B$ is provable by the proof-tree $\mathcal{D}$, then an easy inspection of the form of the rules in $\LDL$ reveals that the last rule of $\mathcal{D}$ must be $(R\!\to)$, which implies the provability of $A \Rightarrow B$.

For $(ii)$, by induction on $\mathcal{D}$, we prove that $\mathcal{D}$ cannot be a proof-tree for a sequent in the form $\{ A_i \to B_i \}_{i \in I} \Rightarrow \,$ in $\LDL$. Clearly $\mathcal{D}$ cannot be an axiom. Hence, the base case trivially holds. For the induction step, the only possible last rule for $\mathcal{D}$ is $(LW)$. Hence, there is a shorter proof of $\mathcal{D}$ for the sequent $\{A_i \to B_i\}_{i \in I'} \Rightarrow$, for some $I' \subseteq I$. This is impossible by the induction hypothesis.
\end{proof}

The following is the \emph{inversion lemma} for certain rules of $\LDL$ and $\LDLS$. We will use this lemma to prove the admissibility of $(Lc)$ in $\LDL$ and $\LDLS$.

\begin{lemma}[Inversion]\label{lem:inv} For any natural numbers $h$ and $n$, multisets  $\Gamma$ and $\Delta$ of formulas and formulas $A$ and $B$, we have the following:
	\begin{description}
		\item[$(i)$]
        If $\LDL \vdash_h \Gamma, \nabla^n (A \wedge B) \Rightarrow \Delta$ then $\LDL \vdash_h \Gamma, \nabla^n A, \nabla^n B \Rightarrow \Delta$.
		\item[$(ii)$]
        If $\LDL \vdash_h \Gamma, \nabla^n (A \vee B) \Rightarrow \Delta$ then $\LDL \vdash_h \Gamma, \nabla^n A \Rightarrow \Delta$ and $\LDL \vdash_h \Gamma, \nabla^n B \Rightarrow \Delta$.
  	\item[$(iii)$]
        If $\LDL \vdash_h \Gamma, \nabla^n (A \supset B) \Rightarrow \Delta$ then $\LDL \vdash_h \Gamma, \nabla^n B \Rightarrow \Delta$.
	\end{description}
The parts $(i)$ and $(ii)$ also hold for $\LDLS$.
\end{lemma}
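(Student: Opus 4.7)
The plan is to prove all three parts simultaneously by induction on the height $h$ of the derivation, keeping $n$, $\Gamma$, $\Delta$, $A$, and $B$ universally quantified so that the induction hypothesis can be invoked at shifted values of $n$. The three parts are essentially independent, since their principal connectives differ, but the case analyses are structurally identical, so bundling them is convenient. The base case is vacuous: none of the axioms $(Id^p)$, $(L\bot)$, $(R\top)$ admits a sequent whose antecedent contains a formula of the form $\nabla^n(A \wedge B)$, $\nabla^n(A \vee B)$, or $\nabla^n(A \supset B)$, since the axiom antecedents consist of at most a single atom or $\bot$.

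For the inductive step, I would case-split on the last rule $R$. When $R$ is a left-introduction rule whose principal formula is exactly the formula being inverted --- namely $(L\wedge^n)$ for part $(i)$, $(L\vee^n)$ for part $(ii)$, or $(L\supset^n)$ for part $(iii)$ --- the conclusion is immediate from the premise(s): for $(i)$ the single premise is already $\Gamma, \nabla^n A, \nabla^n B \Rightarrow \Delta$; for $(ii)$ the two premises are exactly the two target sequents; and for $(iii)$ the right premise is already $\Gamma, \nabla^n B \Rightarrow \Delta$, the left premise being irrelevant. When $R$ operates on a side formula, so that its principal formula is distinct from the one being inverted, I would apply the induction hypothesis at the same $n$ to each premise and reapply $R$ to recover the conclusion at height $h$. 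The $(LW)$ case requires a small sub-split: either the inverted formula already lies in the premise's antecedent (apply IH, then reapply $(LW)$), or it belongs to the freshly weakened multiset (apply $(LW)$ directly to the premise, with the weakening multiset modified so that one occurrence of $\nabla^n(A \wedge B)$ is replaced by $\nabla^n A, \nabla^n B$, and analogously for the other two parts).

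The delicate cases are $(N)$ and $(R\!\rightarrow)$. For $(N)$, the entire antecedent of the conclusion has the form $\nabla \Gamma'$, so for $\nabla^n(A \wedge B)$ to appear there one must have $n \geq 1$ and $\nabla^{n-1}(A \wedge B) \in \Gamma'$; applying IH at parameter $n-1$ to the premise and then reapplying $(N)$ yields the desired sequent at height $h$. For $(R\!\rightarrow)$, the premise has the form $\nabla \Sigma, C \Rightarrow D$, so any occurrence of $\nabla^n(A \wedge B)$ in the conclusion's antecedent appears as $\nabla^{n+1}(A \wedge B)$ in the premise; IH is invoked at $n+1$ and $(R\!\rightarrow)$ is reapplied. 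The same bookkeeping treats $(R\!\supset)$ for part $(iii)$, though there the antecedent context is preserved, which reduces it to the routine side-formula pattern.

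The main obstacle I anticipate is precisely this parameter shift in $n$: an induction statement with $n$ fixed would fail on $(N)$ and $(R\!\rightarrow)$, and the whole argument relies on stating the lemma uniformly over all $n \in \mathbb{N}$. Once the induction is correctly parametrized, every case reduces to routine bookkeeping. The statement for $\LDLS$ can then be obtained either by repeating the same argument with the $\supset$-rules simply absent, or, more economically, as an immediate consequence of the conservativity result in Lemma~\ref{obs:ldl-cons-ext}.
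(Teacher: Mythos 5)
Your proposal is correct and follows essentially the same route as the paper's proof: induction on the height $h$ with $n$ quantified so that the induction hypothesis can be applied at $n-1$ in the $(N)$ case and at $n+1$ in the $(R\!\to)$ case, the principal-formula and weakening cases handled directly, all remaining rules commuted past, and the $\LDLS$ version obtained from the conservativity lemma. The only cosmetic difference is that you treat the three parts in one bundled induction while the paper writes out part $(iii)$ and declares the others analogous.
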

\begin{proof}
We only prove the case for $\LDL$. The other is a consequence of Lemma \ref{obs:ldl-cons-ext}. We will explain the proof for $(iii)$; the other two parts are similar. For $(iii)$, we use an induction on $h$. It is clear that $h$ cannot be $0$, as $\Gamma, \nabla^n (A \supset B) \Rightarrow \Delta$ does not match the form of any axiom and thus cannot be derived as one. Therefore, $h > 0$ which means that the proof of $\Gamma, \nabla^n (A \supset B) \Rightarrow \Delta$ ends with a non-axiomatic rule. We now investigate each of these rules. If the proof-tree ends with the rule $(L \!\supset^n)$ and $\nabla^n (A \supset B)$ is the principal formula, the claim follows directly. If the last rule is $(LW)$ and $\nabla^n (A \supset B)$ is in the newly added formulas by the weakening, the proof is straightforward. In any other case, simply commute the last rule with the sequent(s) derived from the induction hypothesis.
Here, we will focus on the cases for $(N)$ and $(R \!\rightarrow)$, as these are the only cases where the context on the left changes:\\

\noindent $\bullet$ Suppose the last rule is $(N)$. Therefore, $n \geq 1$, and the last rule of the proof-tree of height at most $h > 0$ is in the form:
  \begin{prooftree}
    \AXC{$\Gamma', \nabla^{n-1} (A \supset B) \Rightarrow \Delta'$} \RightLabel{$N$}
    \UIC{$\nabla \Gamma', \nabla^n (A \supset B) \Rightarrow \nabla \Delta'$}
  \end{prooftree}
where $\Gamma = \nabla \Gamma'$ and $\Delta = \nabla \Delta'$, for some multisets $\Gamma'$ and $\Delta'$. By the induction hypothesis for the premise, we have a proof-tree for $\Gamma', \nabla^{n-1} B \Rightarrow \Delta'$ of height at most $h-1$. Using $(N)$, we then obtain a proof-tree for $\nabla \Gamma', \nabla^n B \Rightarrow \nabla \Delta'$, which has height at most $h$.\\

\noindent $\bullet$ Suppose the last rule is $(R \!\rightarrow)$. Therefore, $\Delta = \{C \rightarrow D\}$, for some formulas $C$ and $D$, and the last rule of the proof-tree of height at most $h > 0$ is in the form:
  \begin{prooftree}
    \AXC{$\nabla \Gamma, \nabla^{n+1} (A \supset B), C \Rightarrow D$}
    \RightLabel{$R \!\rightarrow$}
    \UIC{$\Gamma, \nabla^n (A \supset B) \Rightarrow C \rightarrow D$}
  \end{prooftree}
By the induction hypothesis, we have a proof-tree for $\nabla \Gamma, \nabla^{n+1} B, C \Rightarrow D$ of height at most $h-1$. By applying the rule $(R \!\rightarrow)$, we obtain a proof-tree for $\Gamma, \nabla^n B \Rightarrow C \rightarrow D$ of height at most $h$.
\end{proof}

\begin{theorem}[Height-preserving admissibility of $(Lc)$]\label{thm:lc-adm} If $\LDL \vdash_h \Gamma, A, A \Rightarrow \Delta$ then $\LDL \vdash_h \Gamma, A \Rightarrow \Delta$, for any multisets $\Gamma$ and $\Delta$ of formulas, any formula $A$ and any $h \geq 0$. Restricting to $\mathcal{L}_*$-formulas, the same also holds for $\LDLS$.
\end{theorem}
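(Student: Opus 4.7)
The plan is to prove the theorem by induction on the height $h$ of the derivation $\mathcal{D}$ of $\Gamma, A, A \Rightarrow \Delta$ in $\LDL$. The base case $h = 0$ is vacuous: each axiom of $\LDL$, namely $(Id^p)$, $(L\bot)$, and $(R\top)$, has an antecedent consisting of at most one formula, so the sequent $\Gamma, A, A \Rightarrow \Delta$, having at least two antecedent formulas, cannot be an instance of any axiom.

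For the inductive step, I would perform a case analysis on the last rule $R$ of $\mathcal{D}$. If $R$ either has no principal formula (as with $(N)$, $(Rw)$, $(LW)$) or has a principal formula distinct from $A$ (for instance a right rule, or a left rule whose principal formula sits in $\Gamma$), then both occurrences of $A$ persist in the premises of $R$, possibly with an extra $\nabla$ prepended in the cases of $(N)$ and $(R\!\to)$. In each such situation I apply the induction hypothesis to each premise, contracting either $A$ itself or $\nabla A$, and then reapply $R$ to obtain $\Gamma, A \Rightarrow \Delta$ in height at most $h$. The rule $(LW)$ needs a small subcase split on whether zero, one, or two copies of $A$ sit in the premise's antecedent versus in the weakened multiset $\Sigma$, but in every subcase a single reapplication of $(LW)$, preceded if necessary by the induction hypothesis, does the job.

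The substantive cases are those where $A$ is the principal formula of the last rule. For $(L \wedge^n)$ with $A = \nabla^n(B \wedge C)$, one copy of $A$ is consumed by the rule and decomposed into $\nabla^n B, \nabla^n C$ in the premise, while the other copy remains in the context. Applying Lemma \ref{lem:inv} part $(i)$ to this remaining copy of $A$ produces a premise with two copies each of $\nabla^n B$ and $\nabla^n C$, both of which I then contract by two applications of the induction hypothesis, after which a single application of $(L \wedge^n)$ recovers $\Gamma, A \Rightarrow \Delta$ in height at most $h$. The cases of $(L \vee^n)$ and $(L\!\supset^n)$ follow the same template, invoking parts $(ii)$ and $(iii)$ of Lemma \ref{lem:inv} respectively; for $(L\!\supset^n)$, only the second premise needs inversion, since the first premise already retains both copies of $A$ and the induction hypothesis applies directly.

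The apparent obstacle is $(L\!\to^n)$ with $A = \nabla^{n+1}(B \to C)$, since no inversion lemma for $\to$ is available (and none can be, by the treatment of $(R\!\to)$ and the behaviour of $\nabla$ in the rule). What rescues the proof is the deliberate design of $(L\!\to^n)$: the principal formula is duplicated in both premises of the rule schema, so both copies of $A$ already appear in each premise of $\mathcal{D}$. A direct application of the induction hypothesis to each premise contracts one copy of $A$ at height at most $h-1$, and reapplication of $(L\!\to^n)$ yields $\Gamma, A \Rightarrow \Delta$ in height at most $h$, with no inversion step required. Finally, the corresponding statement for $\LDLS$ on $\mathcal{L}_*$-sequents follows immediately from the $\LDL$ version together with the conservativity statement of Lemma \ref{obs:ldl-cons-ext}, so no separate induction is needed.
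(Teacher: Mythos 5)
Your proposal is correct and follows essentially the same route as the paper: induction on height, direct application of the induction hypothesis when no occurrence of $A$ is principal, Lemma \ref{lem:inv} to invert the surviving copy in the cases $(L\wedge^n)$, $(L\vee^n)$, and the second premise of $(L\!\supset^n)$, and reliance on the built-in duplication of the principal formula in the premises of $(L\!\to^n)$ (and the first premise of $(L\!\supset^n)$) to avoid needing an inversion lemma for $\to$. The only quibble is the phrase ``an extra $\nabla$ prepended'' for the $(N)$ case, where the copies in the premise in fact carry one \emph{fewer} $\nabla$ than in the conclusion; since the induction hypothesis applies to an arbitrary formula, this does not affect the argument.
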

\begin{proof}
We only prove the case for $\LDL$. The other is a consequence of Lemma \ref{obs:ldl-cons-ext}. For $\LDL$, we use induction on $h$. For $h = 0$, the sequent $\Gamma, A, A \Rightarrow \Delta$ must be an axiom, which is impossible upon inspecting the form of the axioms. For $h > 0$, we investigate different cases based on the last rule of the proof of $\Gamma, A, A \Rightarrow \Delta$. If the last rule is $(LW)$ or $(N)$, the claim follows easily. The cases where none of the occurrences of $A$ are principal are also straightforward, as it suffices to apply the same rule to the proof-trees provided by the induction hypothesis.
Therefore, suppose one occurrence of $A$ is principal, and the last rule is either $(L \wedge^n)$, $(L \vee^n)$, $(L \!\rightarrow^n)$, or $(L \!\supset^n)$. In this case, we have the following scenarios:\\

		\noindent $\bullet$ For $(L \wedge ^n)$, we have $A = \nabla^n (B \wedge C)$, for some formulas $B$ and $C$ and the proof is in the form:

		\begin{prooftree}
			\AXC{$\D_0$} \noLine
			\UIC{$\Gamma, \nabla^n B, \nabla^n C, \nabla^n (B \wedge C) \Rightarrow \Delta$}
			\RightLabel{$L \wedge ^n$}
			\UIC{$\Gamma, \nabla^n (B \wedge C), \nabla^n (B \wedge C) \Rightarrow \Delta$}		
		\end{prooftree}
Then, by Lemma \ref{lem:inv}, we obtain a proof for $\Gamma, \nabla^n B, \nabla^n B, \nabla^n C, \nabla^n C \Rightarrow \Delta$ of height at most $h-1$. Using the induction hypothesis twice, we then obtain a proof for $\Gamma, \nabla^n B, \nabla^n C \Rightarrow \Delta$ of height at most $h-1$. Finally, applying $(L \wedge^n)$ to this proof results in the desired proof with height at most $h$.\\
	
		\noindent $\bullet$ For $(L \vee^n)$, we have $A = \nabla^n (B \vee C)$, for some formulas $B$ and $C$ and the proof is in the form:
		\begin{prooftree}
			\AXC{$\D_0$} \noLine
			\UIC{$ \Gamma, \nabla^n B, \nabla^n (B \vee C) \Rightarrow \Delta$}
			\AXC{$\D_1$} \noLine
			\UIC{$\Gamma, \nabla^n C, \nabla^n (B \vee C) \Rightarrow \Delta$}
			\RightLabel{$L \vee ^n$}
			\BIC{$ \Gamma, \nabla^n (B \vee C), \nabla^n (B \vee C) \Rightarrow \Delta$}		
		\end{prooftree}
		By Lemma \ref{lem:inv} for $\D_0$ and $\D_1$, we obtain proofs for $\Gamma, \nabla^n B, \nabla^n B \Rightarrow \Delta$ and $\Gamma, \nabla^n C, \nabla^n C \Rightarrow \Delta$ of heights at most $h-1$. By the induction hypothesis, we also have proofs for $\Gamma, \nabla^n B \Rightarrow \Delta$ and $\Gamma, \nabla^n C \Rightarrow \Delta$ of heights at most $h-1$. Finally, by applying $(L \vee ^n)$ to these proofs, we obtain the desired proof with height at most $h$.\\
	
		\noindent $\bullet$ For $(L \!\rightarrow^n)$, we have $A = \nabla^{n+1} (B \rightarrow C)$, for some formulas $B$ and $C$. The proof of $\Gamma, \nabla^{n+1} (B \rightarrow C), \nabla^{n+1} (B \rightarrow C) \Rightarrow \Delta$ has the premises 
        \[
        \Gamma, \nabla^{n+1} (B \rightarrow C), \nabla^{n+1} (B \rightarrow C) \Rightarrow \nabla^n B
        \]
        and 
        \[
        \Gamma, \nabla^n C, \nabla^{n+1} (B \rightarrow C), \nabla^{n+1} (B \rightarrow C) \Rightarrow \Delta.
        \]
        By the induction hypothesis, we have proofs for $\Gamma, \nabla^{n+1} (B \rightarrow C) \Rightarrow \nabla^n B$ and $\Gamma, \nabla^n C, \nabla^{n+1} (B \rightarrow C) \Rightarrow \Delta$ with height at most $h-1$. Finally, by applying the rule $(L \!\rightarrow^n)$ to these proofs, we obtain the desired proof with height at most $h$.\\

		\noindent $\bullet$ For $(L\!\supset^n)$, we have $A = \nabla^n (B \supset C)$, for some formulas $B$ and $C$ and the last rule is in the form:
		\begin{prooftree}
			\AXC{$\D_0$} \noLine
			\UIC{$\Gamma, \nabla^n (B \supset C), \nabla^n (B \supset C) \Rightarrow \nabla^n B$}
			\AXC{$\D_1$} \noLine
			\UIC{$\Gamma, \nabla^n C, \nabla^n (B \supset C) \Rightarrow \Delta$}
			\RightLabel{$L\!\supset^n$}
			\BIC{$\Gamma, \nabla^n (B \supset C), \nabla^n (B \supset C) \Rightarrow \Delta$}
		\end{prooftree}
		By Lemma \ref{lem:inv} for $\D_1$, we obtain a proof for $\Gamma, \nabla^n C, \nabla^n C \Rightarrow \Delta$ of height at most $h-1$. By the induction hypothesis, we have proofs for $\Gamma, \nabla^n (B \supset C) \Rightarrow \nabla^n B$ and $\Gamma, \nabla^n C \Rightarrow \Delta$, both with heights at most $h-1$. Finally, by applying the rule $(L \!\supset^n)$, we obtain the desired proof with height at most $h$.
\end{proof}

\begin{theorem}\label{thm:cut-adm}\emph{(Admissibility of cut)}
  If $\LDL \vdash \Gamma \Rightarrow A$ and $\LDL \vdash \Sigma, A \Rightarrow \Delta$, then $\LDL \vdash \Gamma, \Sigma \Rightarrow \Delta$. The same also holds for $\LDLS$.
\end{theorem}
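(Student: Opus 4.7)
The plan is a standard double induction: primary on the complexity of the cut formula $A$ (counting $\nabla$ as a connective), secondary on the sum $h_1 + h_2$ of the heights of the given derivations $\D_1 : \Gamma \Rightarrow A$ and $\D_2 : \Sigma, A \Rightarrow \Delta$. By Lemma~\ref{obs:ldl-cons-ext}, it suffices to prove the claim for $\LDL$; the statement for $\LDLS$ then follows since cuts on $\L_*$-formulas do not introduce $\supset$.

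We proceed by case analysis on the last rules of $\D_1$ and $\D_2$. Axiom cases ($Id^p$, $L\bot$, $R\top$) and cases where $A$ is first introduced by a weakening ($LW$ or $Rw$) are immediate. When $A$ is not principal in the last rule of one of the derivations, the cut commutes upward through that rule, strictly decreasing $h_1 + h_2$ and triggering the secondary IH. Extra attention is required for the rules $(N)$ and $(R\!\to)$, whose premises wrap the antecedent in $\nabla$: when both $\D_1$ and $\D_2$ end in $(N)$ with $A = \nabla A'$, we cut the two inner premises on $A'$ (strictly simpler, so the primary IH applies) and reapply $(N)$ to the result.

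The genuinely two-sided principal cases arise only when $A$'s outermost connective is $\wedge$, $\vee$, or $\supset$. Indeed, no rule of $\LDL$ makes a formula with outermost connective $\nabla$ principal on the right; and $(L\!\to^n)$ always makes $\nabla^{n+1}(B \to C)$ principal, never $B \to C$ itself, so cuts on $A = B \to C$ always fall under the non-principal case. Each of these three principal cases follows the standard G3 recipe: the primary IH cuts on the strictly simpler immediate subformulas of $A$ (for the $\supset$ case also invoking the secondary IH once to reduce the height of $\D_2$ against the same cut formula), and the height-preserving contraction of Theorem~\ref{thm:lc-adm} collapses the resulting duplicated antecedents.

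The main obstacle is the asymmetric sub-case where $A = \nabla^n X$ with $n \geq 1$, the last rule of $\D_2$ is $(L\wedge^n)$, $(L\vee^n)$, $(L\!\to^n)$ or $(L\!\supset^n)$ making $A$ principal on the left, while the last rule of $\D_1$ is $(N)$, so $A$ is not formally principal on the right. A blind commutation through $(N)$ does not match the right-principal reduction because $\D_2$'s last rule does not peel a $\nabla$. To resolve this we plan to establish, as a height-preserving auxiliary proved by induction on the height of $\D_1$, a right-side inversion property: for instance, if $\Gamma \Rightarrow \nabla^n(Y \wedge Z)$ is derivable then so are $\Gamma \Rightarrow \nabla^n Y$ and $\Gamma \Rightarrow \nabla^n Z$, with analogous statements for the other connectives. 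Combined with Lemma~\ref{lem:inv} applied on the right, the primary IH then fires on the strictly simpler cut formulas $\nabla^n Y, \nabla^n Z$, and Theorem~\ref{thm:lc-adm} absorbs duplications, closing the induction.
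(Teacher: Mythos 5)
Your overall architecture (double induction, case split on principality) is the standard one, but the two devices you rely on to make it close do not work in this calculus, and both failures trace to the same source: the rules $(N)$ and $(R\!\to)$ change the cut formula from $A$ to $\nabla A$ when you try to permute the cut upward through the \emph{right} premise. Concretely, if $\D_2$ ends with $(R\!\to)$ and $A$ is not principal, its premise is $\nabla \Sigma, \nabla A, B \Rightarrow C$; a standard cut of $\Gamma \Rightarrow A$ against this sequent is not an instance of your rule, and repairing it by first applying $(N)$ to $\D_1$ to get $\nabla\Gamma \Rightarrow \nabla A$ both increases the height of the left premise and, in your measure where $\nabla$ counts as a connective, strictly \emph{increases} the complexity of the cut formula. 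So neither induction parameter decreases and the commutation does not go through; your proposal only treats the special case where both derivations end in $(N)$. The second failure is the proposed right-inversion lemma for the asymmetric principal case: its ``analogous statement'' for disjunction would have to read ``if $\Gamma \Rightarrow \nabla^n(Y \vee Z)$ is derivable then $\Gamma \Rightarrow \nabla^n Y$ or $\Gamma \Rightarrow \nabla^n Z$ is,'' and this is false --- $\nabla^n(p \vee q) \Rightarrow \nabla^n(p \vee q)$ is derivable (Lemma \ref{lem:id-adm}$(i)$ plus $(N)$) while neither $\nabla^n(p\vee q) \Rightarrow \nabla^n p$ nor $\nabla^n(p\vee q) \Rightarrow \nabla^n q$ is. Since $(L\vee^n)$ is one of the rules you list in your ``main obstacle'' case, that case remains open.

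The paper's proof resolves both problems at once by proving a \emph{generalized} cut: from $\Pi \Rightarrow A$ and $\Phi, \nabla^n A \Rightarrow \Lambda$ infer $\Phi, \nabla^n \Pi \Rightarrow \Lambda$, by induction on the lexicographic pair $(\rho(A), h(\D_1)+h(\D_2))$ where the rank $\rho$ is transparent to $\nabla$, i.e.\ $\rho(\nabla B)=\rho(B)$. The exponent $n$ absorbs every $\nabla$ produced by permuting through $(N)$ or $(R\!\to)$ in $\D_2$ (the cut formula becomes $\nabla^{n+1}A$ at unchanged rank and smaller height), and when $\D_1$ ends with $(N)$ on $A = \nabla B$ the cut descends to $B$ against $\nabla^{n+1}B$, again at unchanged rank. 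Iterating this strips all outer $\nabla$'s into $n$ until $\D_1$ ends with a genuine right rule, at which point the principal--principal reduction against $(L\wedge^n)$, $(L\vee^n)$, $(L\!\to^{n-1})$, or $(L\!\supset^n)$ fires on strictly lower-rank subformulas. No right-inversion is needed. If you want to salvage your write-up, you should replace the plain cut by this $\nabla$-offset version (or an equivalent device) and switch to a complexity measure that ignores $\nabla$; as it stands, the $\vee$ case and the $(N)/(R\!\to)$ commutations on the right premise are genuine gaps.
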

\begin{proof}
We postpone the proof to Section \ref{sec: CutAddmissible}.
\end{proof}

\begin{notation}
Thanks to Lemma \ref{lem:id-adm} and Theorems \ref{thm:lc-adm} and \ref{thm:cut-adm}, we can now assume that the axiom $(Id)$ and the rules $(Lc)$ and $(cut)$ are available in $\LDL$ and $\LDLS$. From this point onward, we will use these rules in the proof-trees of $\LDL$ and $\LDLS$ without further explanation.
\end{notation}

\subsection{Equivalence of $\GSTN$ and $\LDL$}

In this subsection, we aim to show that $\LDL$ (resp. $\LDLS$) and $\GSTN$ (resp. $\GSTNS$) are equivalent, meaning they prove precisely the same set of sequents. This will establish that the logics $\ldl$ and $\ldls$ are the logics of $\LDL$ and $\LDLS$, respectively. 
To this end, we first need to establish the admissibility of the rules $(L \wedge ^n)$, $(L \vee ^n)$, $(L \!\rightarrow^n)$, and $(L \!\supset^n)$ of $\LDL$ (resp. $\LDLS$) within $\GSTN$ (resp. $\GSTNS$).

\begin{lemma}\label{lem:l-nabla-dist} For any $n \geq 0$ and any formulas $A, B \in \L$, we have the following:
	\begin{description}
		\item[$(i)$] 
        $\GSTN \vdash \nabla^n (A \vee B) \Rightarrow \nabla^n A \vee \nabla^n B$.
	
		\item[$(ii)$]
        $\GSTN \vdash \nabla^n (A \wedge B) \Rightarrow \nabla^n A \wedge \nabla^n B$. 
	
		\item[$(iii)$] 
        $\GSTN \vdash \nabla^n (A \rightarrow B) \Rightarrow \nabla^n A \rightarrow \nabla^n B$.
		
        \item[$(iv)$]
        $\GSTN \vdash \nabla^n (A \supset B) \Rightarrow \nabla^n A \supset \nabla^n B$.
	\end{description}
  Moreover, $(i)$, $(ii)$ and $(iii)$ also hold for $\GSTNS$, when $A, B \in \L_*$.
\end{lemma}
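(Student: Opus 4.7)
The plan is to dispatch the four items separately. Items $(ii)$, $(iii)$, and $(iv)$ are straightforward monotonicity arguments that each follow from one base sequent and iterated applications of $(N)$; item $(i)$, the join-preservation claim, is the real work, and $(cut)$ is indispensable here, as the excerpt already signals by pointing out that $\nabla(p \vee q) \Rightarrow \nabla p \vee \nabla q$ has no cut-free proof.

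For $(ii)$ I would take $A \wedge B \Rightarrow A$ and $A \wedge B \Rightarrow B$ (each by one application of $(L \wedge_i)$ to $(Id)$), iterate $(N)$ $n$ times on each, and combine the two conclusions with $(R \wedge)$. For $(iii)$ I would first derive the base sequent $A, \nabla(A \to B) \Rightarrow B$ by $(L\!\to)$ applied to $A \Rightarrow A$ and $B \Rightarrow B$, then apply $(N)$ $n$ times to reach $\nabla^n A, \nabla^{n+1}(A \to B) \Rightarrow \nabla^n B$, and close with $(R\!\to)$ to introduce the arrow on the right. Item $(iv)$ is structurally identical but Heyting: $(L\!\supset)$ on $A \Rightarrow A$ and $B \Rightarrow B$ gives $A, A \supset B \Rightarrow B$, then $(N)$ and $(R\!\supset)$ finish. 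Since $(ii)$ and $(iii)$ use only rules available in $\GSTNS$, their arguments transfer verbatim to $\GSTNS$ when $A, B \in \L_*$.

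For $(i)$ the obstacle is that $(N)$ only lets us push $\nabla$ onto a single succedent formula, whereas we need to spread it across a disjunction; the trick is to route through $\Box C = \top \to C$ and exploit the provable sequent $\nabla \Box C \Rightarrow C$. I would first handle $n = 1$. Starting from $\nabla A \Rightarrow \nabla A$, apply $(R \vee_1)$, weaken $\top$ into the antecedent by $(Lw)$, and then invoke $(R\!\to)$ with $\Gamma = \{A\}$ to obtain $A \Rightarrow \top \to (\nabla A \vee \nabla B)$; symmetrically derive $B \Rightarrow \top \to (\nabla A \vee \nabla B)$; $(L \vee)$ then yields $A \vee B \Rightarrow \Box(\nabla A \vee \nabla B)$. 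A single $(N)$ turns this into $\nabla(A \vee B) \Rightarrow \nabla \Box(\nabla A \vee \nabla B)$, and cutting against $\nabla \Box C \Rightarrow C$ (itself obtained by $(L\!\to)$ applied to $\Rightarrow \top$ and $C \Rightarrow C$, with $C := \nabla A \vee \nabla B$) produces the desired $\nabla(A \vee B) \Rightarrow \nabla A \vee \nabla B$. For arbitrary $n$ I would induct on $n$: the base $n = 0$ is $(Id)$, and the step takes the induction hypothesis, applies $(N)$ to get $\nabla^{n+1}(A \vee B) \Rightarrow \nabla(\nabla^n A \vee \nabla^n B)$, and then cuts with the $n = 1$ result instantiated at $\nabla^n A$ and $\nabla^n B$. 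Every rule used in the argument for $(i)$ lies in $\GSTNS$, so the transfer to $\GSTNS$ is automatic.
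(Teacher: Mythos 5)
Your proposal is correct and follows essentially the same route as the paper's own proof: items $(ii)$--$(iv)$ via one base sequent plus iterated $(N)$ and the corresponding right rule, and item $(i)$ via $A \vee B \Rightarrow \top \to (\nabla A \vee \nabla B)$, one application of $(N)$, a cut against $\nabla(\top \to C) \Rightarrow C$, and an induction on $n$ that cuts the $n$-case against the $1$-case. No gaps.
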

\begin{proof}
We will only prove the case for $\GSTN$, as the other case follows identically. For $(i)$, let us first consider the following proof-tree in $\GSTN$:
  \begin{prooftree}
    \AXC{}
    \RightLabel{$R \top$}
    \UIC{$\Rightarrow \top$}
    \RightLabel{$Lw$}
    \UIC{$\nabla (\top \rightarrow C) \Rightarrow \top$}
  
    \AXC{}
    \RightLabel{$Id$}
    \UIC{$C \Rightarrow C$}
    \RightLabel{$Lw$}
    \UIC{$\nabla (\top \rightarrow C), C \Rightarrow C$}
  
    \RightLabel{$L \!\rightarrow$}
    \BIC{$\nabla (\top \rightarrow C) \Rightarrow C$}
  \end{prooftree}
Now, substitute $C = \nabla A \vee \nabla B$, call the resulting proof-tree $\mathcal{D}$ and consider the following proof-tree:
  \begin{prooftree}
    \AXC{}
    \RightLabel{$Id$}
    \UIC{$\nabla A \Rightarrow \nabla A$}
    \RightLabel{$R\vee_1$}
    \UIC{$\nabla A \Rightarrow \nabla A \vee \nabla B$}
    \RightLabel{$Lw$}
    \UIC{$\nabla A , \top \Rightarrow \nabla A \vee \nabla B$}
    \RightLabel{$R \!\rightarrow$}
    \UIC{$A \Rightarrow \top \rightarrow (\nabla A \vee \nabla B)$}
  
    \AXC{}
    \RightLabel{$Id$}
    \UIC{$\nabla B \Rightarrow \nabla B$}
    \RightLabel{$R\vee_2$}
    \UIC{$\nabla B \Rightarrow \nabla A \vee \nabla B$}
    \RightLabel{$Lw$}
    \UIC{$\nabla B , \top \Rightarrow \nabla A \vee \nabla B$}
    \RightLabel{$R \!\rightarrow$}
    \UIC{$B \Rightarrow \top \rightarrow (\nabla A \vee \nabla B)$}
  
    \RightLabel{$L\vee$}
    \BIC{$A \vee B \Rightarrow \top \rightarrow (\nabla A \vee \nabla B)$}
    \RightLabel{$N$}
    \UIC{$\nabla (A \vee B) \Rightarrow \nabla (\top \rightarrow (\nabla A \vee \nabla B))$}
  
    \AXC{$\D$}
  
    \RightLabel{$cut$}
    \BIC{$\nabla (A \vee B) \Rightarrow \nabla A \vee \nabla B$}
  \end{prooftree}
Call this proof-tree $\D_1$. Construct $\D_{n+1}$ inductively as follows:
 {\small
  \begin{prooftree}
    \AXC{$\D_n$}
    \noLine
    \UIC{$\nabla^n (A \vee B) \Rightarrow \nabla^n A \vee \nabla^n B$}
    \RightLabel{$N$}
    \UIC{$\nabla^{n+1} (A \vee B) \Rightarrow \nabla (\nabla^n A \vee \nabla^n B)$}
  
    \AXC{$\D_1$}
    \noLine
    \UIC{$\nabla (\nabla^n A \vee \nabla^n B) \Rightarrow \nabla^{n+1} A \vee \nabla^{n+1} B$}
    
    \RightLabel{$cut$}
    \BIC{$\nabla^{n+1} (A \vee B) \Rightarrow \nabla^{n+1} A \vee \nabla^{n+1} B$}
  \end{prooftree}
  }
This complete the proof of $(i)$.
  For $(ii)$, $(iii)$ and $(iv)$, consider the following proof-trees:
  \begin{prooftree}
    \AXC{}
    \RightLabel{$Id$}
    \UIC{$A \Rightarrow A$}
    \RightLabel{$L \wedge_1$}
    \UIC{$A \wedge B \Rightarrow A$}
    \RightLabel{$N^{(*)}$} \doubleLine
    \UIC{$\nabla^n (A \wedge B) \Rightarrow \nabla^n A$}
  
    \AXC{}
    \RightLabel{$Id$}
    \UIC{$B \Rightarrow B$}
    \RightLabel{$L \wedge_2$}
    \UIC{$A \wedge B \Rightarrow B$}
    \RightLabel{$N^{(*)}$} \doubleLine	
    \UIC{$\nabla^n (A \wedge B) \Rightarrow \nabla^n B$}
    
    \RightLabel{$R \wedge$}
    \BIC{$\nabla^n (A \wedge B) \Rightarrow \nabla^n A \wedge \nabla^n B$}
  \end{prooftree}

  \begin{prooftree}
    \AXC{}
    \RightLabel{$Id$}
    \UIC{$A \Rightarrow A$}
  
    \AXC{}
    \RightLabel{$Id$}
    \UIC{$B \Rightarrow B$}
    \RightLabel{$Lw$}
    \UIC{$A, B \Rightarrow B$}
  
    \RightLabel{$L \!\rightarrow$}
    \BIC{$\nabla (A \rightarrow B) , A \Rightarrow B$}
    \RightLabel{$N^{(*)}$} \doubleLine
    \UIC{$\nabla^{n+1} (A \rightarrow B) , \nabla^n A \Rightarrow \nabla^n B$}
    \RightLabel{$R \!\rightarrow$}
    \UIC{$\nabla^n (A \rightarrow B) \Rightarrow \nabla^n A \rightarrow \nabla^n B$}
  \end{prooftree}

  \begin{prooftree}
    \AXC{}
    \RightLabel{$Id$}
    \UIC{$A \Rightarrow A$}

    \AXC{}
    \RightLabel{$Id$}
    \UIC{$B \Rightarrow B$}
    \RightLabel{$Lw$}
    \UIC{$A, B \Rightarrow B$}
  
    \RightLabel{$L \!\supset$}
    \BIC{$A \supset B , A \Rightarrow B$}
    \RightLabel{$N^{(*)}$} \doubleLine
    \UIC{$\nabla^n (A \supset B) , \nabla^n A \Rightarrow \nabla^n B$}
    \RightLabel{$R \!\supset$}
    \UIC{$\nabla^n (A \supset B) \Rightarrow \nabla^n A \supset \nabla^n B$}
  \end{prooftree}
where $N^{(*)}$ denotes multiple uses of the rule $N$.
\end{proof}

\begin{theorem}\label{thm:ldl-eq-gstn}
The following hold:
    \begin{description}
      \item[$(i)$]
      $\GSTN \vdash \Gamma \Rightarrow \Delta$ iff $\LDL \vdash \Gamma \Rightarrow \Delta$, for any sequent $\Gamma \Rightarrow \Delta$ over $\L$.
      \item[$(ii)$]
       $\GSTNS \vdash \Gamma \Rightarrow \Delta$ iff $\LDLS \vdash \Gamma \Rightarrow \Delta$, for any sequent $\Gamma \Rightarrow \Delta$ over $\L_*$.
  \end{description}
  \end{theorem}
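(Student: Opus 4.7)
The plan is to prove each direction of (i) by induction on the length of the given derivation, showing that every axiom and rule of one calculus is derivable (or admissible) in the other; (ii) will then follow by an identical argument restricted to $\mathcal{L}_*$, since none of the simulations introduces a $\supset$ that was not already present in the endsequent.

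For the direction from $\LDL$ to $\GSTN$, the easy cases are immediate: $(Id^p)$ is an instance of $(Id)$; the rules $(R\wedge)$, $(R\vee_1)$, $(R\vee_2)$, $(R\!\to)$, $(R\!\supset)$, together with $(N)$ and $(Rw)$, appear verbatim in $\GSTN$; and $(LW)$ is simulated by iterating $(Lw)$. The substantive step is the simulation of the four $\nabla$-indexed left rules $(L\wedge^n)$, $(L\vee^n)$, $(L\!\to^n)$, $(L\!\supset^n)$. My plan here is, in each case, to invoke the appropriate clause of Lemma \ref{lem:l-nabla-dist}, which supplies a $\GSTN$-proof of a sequent that pushes $\nabla^n$ past the principal connective; then apply the corresponding unindexed $\GSTN$-rule to the premise(s); and finally close with one application of $(cut)$ together with at most one contraction. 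For instance, to simulate $(L\wedge^n)$ I would first derive $\Gamma, \nabla^n A \wedge \nabla^n B \Rightarrow \Delta$ from the premise $\Gamma, \nabla^n A, \nabla^n B \Rightarrow \Delta$ via $(L\wedge_1)$, $(L\wedge_2)$ and $(Lc)$, and then cut with Lemma \ref{lem:l-nabla-dist}$(ii)$ to reach $\Gamma, \nabla^n(A\wedge B) \Rightarrow \Delta$. The other three $\nabla$-indexed rules are handled analogously using parts $(i)$, $(iii)$, $(iv)$ of Lemma \ref{lem:l-nabla-dist}.

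For the opposite direction, I plan to rely on the three previously established admissibility results in $\LDL$: Lemma \ref{lem:id-adm}$(i)$ yields the admissibility of the general identity $(Id)$, Theorem \ref{thm:lc-adm} yields $(Lc)$, and Theorem \ref{thm:cut-adm} yields $(cut)$. The remaining $\GSTN$-rules reduce to $\LDL$-rules essentially as indicated in Remark \ref{rem:ldl-gstn}: each of $(L\wedge_1)$, $(L\wedge_2)$, $(L\vee)$, $(L\!\to)$, $(L\!\supset)$ is obtained by first using $(LW)$ to pad the premises with the missing immediate subformulas and then applying the corresponding $n=0$ rule $(L\wedge^0)$, $(L\vee^0)$, $(L\!\to^0)$, or $(L\!\supset^0)$; the remaining rules and axioms are either shared by the two calculi verbatim or, in the case of $(Lw)$, an immediate specialization of $(LW)$.

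The main obstacle is precisely the simulation of the $\nabla$-indexed left rules of $\LDL$ inside $\GSTN$---this is exactly the gap those rules were designed to cover, given that cut-elimination fails for $\GSTN$. Lemma \ref{lem:l-nabla-dist} is the ingredient that bridges this gap, and it is itself built from both $(cut)$ and $(N)$ in $\GSTN$. With these observations in place, part (ii) is obtained by replaying the same argument in the $\supset$-free fragment: all simulations stay inside $\LDLS$ and $\GSTNS$ when applied to sequents over $\mathcal{L}_*$, and the conservativity recorded in Lemma \ref{obs:ldl-cons-ext} guarantees that nothing is lost by staying within the restricted language.
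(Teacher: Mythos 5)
Your proposal is correct and follows essentially the same route as the paper: the direction from $\GSTN$ to $\LDL$ rests on Lemma \ref{lem:id-adm}, Theorem \ref{thm:lc-adm}, Theorem \ref{thm:cut-adm} and Remark \ref{rem:ldl-gstn}, while the converse simulates each $\nabla$-indexed left rule by applying the unindexed $\GSTN$-rule to the premises and then cutting against the corresponding distribution sequent of Lemma \ref{lem:l-nabla-dist}, followed by a contraction. The only cosmetic difference is your choice of $(L\wedge^n)$ rather than the paper's $(L\!\to^n)$ as the worked example.
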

  \begin{proof}
 We prove only $(i)$, as the proof of $(ii)$ follows similarly.
   For $(i)$, by induction on the proof-tree for $\Gamma \Rightarrow \Delta$, the claim is easy to prove in both directions. For the left-to-right direction, as observed in Remark \ref{rem:ldl-gstn}, it suffices to show the provability of the axiom $(Id)$ and the admissibility of the rules $(Lc)$ and $(cut)$ in $\LDL$. These are established in Lemma \ref{lem:id-adm}, Theorem \ref{thm:lc-adm}, and Theorem \ref{thm:cut-adm}, respectively. For the other direction, we essentially use Lemma \ref{lem:l-nabla-dist} whenever needed. We will explain the case where the proof-tree for $\Gamma \Rightarrow \Delta$ in $\LDL$ ends with the rule $(L \!\rightarrow^n)$. The other cases are similar. In this case, by the induction hypothesis, we have proof-trees $\D_1$ and $\D_2$ for $\Gamma, \nabla^{n+1} (A \rightarrow B) \Rightarrow \nabla^n A$ and $\Gamma, \nabla^{n+1} (A \rightarrow B), \nabla^n B \Rightarrow \Delta$ in $\GSTN$. We also have a proof-tree $\D'_0$ in $\GSTN$ for $\nabla^n (A \rightarrow B) \Rightarrow \nabla^n A \rightarrow \nabla^n B$, by Lemma \ref{lem:l-nabla-dist} part $(iii)$. Now, consider the following proof-trees $\D'$ and $\D_3$:
  
    \begin{prooftree}
      \AXC{$\D'_0$}
      \noLine
      \UIC{$\nabla^n (A \rightarrow B) \Rightarrow \nabla^n A \rightarrow \nabla^n B$}
      \RightLabel{$N$} \LeftLabel{$\D' \quad$}
      \UIC{$\nabla^{n+1} (A \rightarrow B) \Rightarrow \nabla (\nabla^n A \rightarrow \nabla^n B)$}
    \end{prooftree}
  
    \begin{prooftree}
      \AXC{$\D_1$}
      \noLine
      \UIC{$\Gamma, \nabla^{n+1} (A \rightarrow B) \Rightarrow \nabla^n A$}

      \AXC{$\D_2$}
      \noLine
      \UIC{$\Gamma, \nabla^{n+1} (A \rightarrow B), \nabla^n B \Rightarrow \Delta$}
      \RightLabel{$L \!\rightarrow$} \LeftLabel{$\D_3 \quad$}
      \BIC{$\Gamma, \nabla^{n+1} (A \rightarrow B), \nabla (\nabla^n A \rightarrow \nabla^n B) \Rightarrow \Delta$}
    \end{prooftree}
By applying the cut rule on $\D'$ and $\D_3$, and performing one application of $(Lc)$, we obtain the desired sequent in $\GSTN$:
    \begin{prooftree}
      \AXC{$\D'$}
      \AXC{$\D_3$}
      \RightLabel{$cut$}
      \BIC{$\Gamma, (\nabla^{n+1} (A \rightarrow B))^2 \Rightarrow \Delta$}
      \RightLabel{$Lc$}
      \UIC{$\Gamma, \nabla^{n+1} (A \rightarrow B) \Rightarrow \Delta$}
    \end{prooftree}
  \end{proof}

\begin{corollary}\label{LogicAndCalculi}
  $\ldl$ and $\ldls$ are the logics of $\LDL$ and $\LDLS$, respectively. Moreover, $\ldl$ is conservative over $\ldls$.
\end{corollary}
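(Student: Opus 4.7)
The plan is to observe that this corollary is essentially a bookkeeping consequence of the two main technical results already established: Theorem \ref{thm:ldl-eq-gstn}, which asserts the deductive equivalence of $\GSTN$ with $\LDL$ and of $\GSTNS$ with $\LDLS$, and Lemma \ref{obs:ldl-cons-ext}, which gives the $\nabla$-analytic conservativity of $\LDL$ over $\LDLS$ on $\L_*$-sequents. No new arguments are needed; the task is solely to unwind the definitions.

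For the first assertion, I would recall from Section~\ref{sec: preliminaries} that the logic of a calculus $G$ is defined as the set of formulas $A$ with $G \vdash \, \Rightarrow A$. Thus by definition $\ldl = \{A \in \L \mid \GSTN \vdash \, \Rightarrow A\}$, and an immediate appeal to Theorem \ref{thm:ldl-eq-gstn}(i), specialised to sequents of the form $\Rightarrow A$, rewrites this set as $\{A \in \L \mid \LDL \vdash \, \Rightarrow A\}$, which is by definition the logic of $\LDL$. Part $(ii)$ of the same theorem handles $\ldls$ and $\LDLS$ in an identical manner.

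For the conservativity statement, I would take an arbitrary $\L_*$-formula $A$ and chain four equivalences: $A \in \ldl$ iff $\LDL \vdash \, \Rightarrow A$ (by the first assertion), iff $\LDLS \vdash \, \Rightarrow A$ (by Lemma \ref{obs:ldl-cons-ext}, whose hypothesis that the sequent lies over $\L_*$ is met because $A \in \L_*$), iff $A \in \ldls$ (again by the first assertion).

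I do not anticipate any genuine obstacle here, since all the substantive work, namely cut-elimination, the simulation of $\GSTN$-rules inside $\LDL$ via Lemma \ref{lem:l-nabla-dist}, and the conservativity lemma, has already been carried out upstream. The only point worth double-checking when writing it up is that Lemma \ref{obs:ldl-cons-ext} really applies at the crucial step of the conservativity argument, and this is automatic because $\Rightarrow A$ with $A \in \L_*$ trivially qualifies as a sequent over $\L_*$.
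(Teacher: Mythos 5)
Your proposal is correct and follows exactly the paper's own argument: the first assertion is obtained by specialising Theorem \ref{thm:ldl-eq-gstn} to sequents of the form $\Rightarrow A$, and the conservativity claim is derived by combining that with Lemma \ref{obs:ldl-cons-ext}. Nothing is missing.
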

\begin{proof}
Since $\ldl$ (resp. $\ldls$) is the logic of $\GSTN$ (resp. $\GSTNS$), the first part directly follows from Theorem \ref{thm:ldl-eq-gstn}. The second part then follows from the conservativity of $\LDL$ over $\LDLS$ (Lemma \ref{obs:ldl-cons-ext}) combined with the first part.
\end{proof}

\subsection{Deduction Theorem} 
Let $\mathbf{LJ}^+$ be the calculus $\mathbf{LJ}$ extended with the cut rule, and recall the \emph{deduction theorem} stating the equivalence between $\Rightarrow A \vdash_{\mathbf{LJ}^+} \, \Rightarrow B$ and $\vdash_{\mathbf{LJ}} A \Rightarrow B$. This form of the deduction theorem does not hold for $\LDL$ and $\LDLS$. For example, using the rule $(N)$, it is easy to see that $\Rightarrow p \vdash_{\LDL} \, \Rightarrow \nabla p$. However, by examining the form of the rules of $\LDL$, it becomes clear that the sequent $p \Rightarrow \nabla p$ cannot be provable in $\LDL$. In this section, we present a modified version of the deduction theorem that holds for $\LDL$ and $\LDLS$. First, we need the following definition:

\begin{definition}
  The set of \emph{variants} of a formula $A$, is defined inductively as follows:
  \begin{description}
      \item[$(i)$]
      $A$ is a variant of $A$, and
     \item[$(ii)$]
     if $B$ is a variant of $A$, then $\nabla B$ and $\Box B$ are also variants of $A$.
  \end{description}
  To put it simply, the variants of a formula $A$ are all possible combinations of $\nabla$ and $\Box$ applied to $A$.
\end{definition}

Let $\LDLP$ (resp. $\LDLSP$) be the calculus $\LDL$ (resp. $\LDLS$) extended by the cut rule. Note that the cut admissibility in $\LDL$ (resp. $\LDLS$) only shows that the provable sequents in $\LDLP$ (resp. $\LDLSP$) and $\LDL$ (resp. $\LDLS$) are the same if the proofs have no assumptions. As usual, in the presence of assumptions, these systems are different, and the task of any deduction theorem is to connect the provability of these two systems in such a general situation:
\begin{theorem}\label{thm:deduction}(Deduction Theorem)
  For any multisets $\Gamma$ and $\Delta$ of formulas and any formula $A$, the following are equivalent:
  \begin{description}
      \item[$(i)$] 
$\Rightarrow A \vdash_{\LDLP} \Gamma \Rightarrow \Delta$.
        \item[$(ii)$] 
$\LDL \vdash \Gamma, \Sigma_A \Rightarrow \Delta$, for a set $\Sigma_A$ of the variants of $A$.     
  \end{description}
A similar claim also holds, replacing $\LDL$ with $\LDLS$ and $\LDLP$ with $\LDLSP$.
\end{theorem}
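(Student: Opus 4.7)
The plan is to prove the two directions separately. Direction (ii) $\Rightarrow$ (i) is routine; direction (i) $\Rightarrow$ (ii) proceeds by induction on proof height and contains one genuinely delicate case.

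For (ii) $\Rightarrow$ (i), the key observation is that from the assumption $\Rightarrow A$ one can derive $\,\Rightarrow V$ in $\LDL$ for every variant $V$ of $A$. This is an easy induction on the structure of $V$: the case $V = \nabla V'$ uses the rule $(N)$ applied with empty antecedent, and the case $V = \Box V'$ uses Lemma \ref{lem:id-adm}$(iv)$ applied with $\Gamma = \varnothing$. Given the assumed cut-free proof $\LDL \vdash \Gamma, \Sigma_A \Rightarrow \Delta$ with $\Sigma_A = \{V_1, \ldots, V_k\}$, iterated cuts in $\LDLP$ against the derived sequents $\,\Rightarrow V_i$ then yield $\Gamma \Rightarrow \Delta$ from $\,\Rightarrow A$.

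For (i) $\Rightarrow$ (ii), I would induct on the height of a $\LDLP$-proof of $\Gamma \Rightarrow \Delta$ from $\{\Rightarrow A\}$. If the leaf is an axiom of $\LDL$, take $\Sigma_A = \varnothing$; if it is the assumption $\,\Rightarrow A$ itself, take $\Sigma_A = \{A\}$, using Lemma \ref{lem:id-adm}$(i)$ to obtain $A \Rightarrow A$ in $\LDL$. In the inductive step, most rules are handled directly: apply the induction hypothesis to each premise, unify the resulting variant sets via $(LW)$ by taking their union as the new $\Sigma_A$, and re-apply the rule. The cut case uses cut admissibility from Theorem \ref{thm:cut-adm}; for the rule $(N)$, if the induction hypothesis yields $\LDL \vdash \Gamma', \Sigma_A \Rightarrow \Delta'$, then applying $(N)$ gives $\LDL \vdash \nabla \Gamma', \nabla \Sigma_A \Rightarrow \nabla \Delta'$, and $\nabla \Sigma_A$ remains a set of variants of $A$ by closure of variants under $\nabla$.

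The main obstacle is the $(R\!\to)$ case, where a naive re-application of $(R\!\to)$ fails: the rule requires the antecedent of its premise to consist entirely of $\nabla$-prefixed formulas together with the principal $A'$, whereas the variants in $\Sigma_A$ need not have this form. Here Lemma \ref{lem:id-adm}$(v)$ is decisive: from the induction hypothesis $\LDL \vdash \nabla \Gamma', A', \Sigma_A \Rightarrow B$ applied to the premise of $(R\!\to)$, part $(v)$ directly yields $\LDL \vdash \Gamma', \Box \Sigma_A \Rightarrow A' \to B$. Taking $\Sigma'_A = \Box \Sigma_A$ preserves the property of being a set of variants of $A$, since variants are closed under $\Box$. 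The argument for $\LDLS$ and $\LDLSP$ is obtained by simply dropping the cases involving $\supset$.
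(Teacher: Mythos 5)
Your proposal is correct and follows essentially the same route as the paper: the direction from \emph{(ii)} to \emph{(i)} by deriving each variant from $\Rightarrow A$ via $(N)$ and Lemma \ref{lem:id-adm}\emph{(iv)} and then cutting, and the direction from \emph{(i)} to \emph{(ii)} by induction on the proof, with the union-of-variant-sets treatment for ordinary rules, $\nabla\Sigma_A$ for $(N)$, and Lemma \ref{lem:id-adm}\emph{(v)} with $\Box\Sigma_A$ for the delicate $(R\!\to)$ case. No gaps.
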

\begin{proof}
We only prove the case for $\LDL$. The other is identical. Note that in $(ii)$, we can also use the cut rule in the proofs in $\LDL$ as it is admissible and we have no assumptions. To prove $(ii)$ from $(i)$, we use an induction on a proof-tree $\mathcal{D}$ of $\, \Rightarrow A \vdash_{\LDLP} \Gamma \Rightarrow \Delta$.
If $\Gamma \Rightarrow \Delta$ is an instance of the axioms of $\LDL$, the claim is trivial by setting $\Sigma_A = \varnothing$. If $\Gamma \Rightarrow \Delta$ is the sequent $\Rightarrow A$, itself, then set $\Sigma_A = \{ A \}$. The claim is proved by Lemma \ref{lem:id-adm}, part $(i)$.
If $\mathcal{D}$ is not an axiom, it ends with one of the rules in $\LDL$ or the rule $(cut)$. There are two cases to consider: either the last rule is in $\{(N), (R\!\to)\}$ or it is not:\\

 \noindent $\bullet$ $(N)$: As $\D$ ends with $(N)$, we have $\Gamma=\nabla \Gamma'$ and $\Delta = \nabla \Delta'$, for some multisets $\Gamma'$ and $\Delta'$. Therefore, $\D$ is in the form:
  \begin{prooftree}
    \AXC{$\D'$} \noLine
    \UIC{$\Gamma' \Rightarrow \Delta'$} 
    \RightLabel{$N$}
    \UIC{$\nabla \Gamma' \Rightarrow \nabla \Delta'$}
  \end{prooftree}
By the induction hypothesis, there is a set $\Sigma'_A$ of the variants of $A$ and a proof-tree $\D_2$ to witness $\LDL \vdash \Gamma', \Sigma'_A \Rightarrow \Delta'$. Now, consider the following proof-tree:
  \begin{prooftree}
    \AXC{$\D_2$} \noLine
    \UIC{$\Sigma'_A, \Gamma' \Rightarrow \Delta'$}
   \RightLabel{$N$}
    \UIC{$\nabla \Sigma'_A, \nabla \Gamma' \Rightarrow \nabla \Delta'$}
  \end{prooftree}
Define $\Sigma_A=\nabla \Sigma'_A$. It is clear that $\Sigma_A$ is also a set of the variants of $A$ and $\LDL \vdash \Gamma, \Sigma_A \Rightarrow \Delta$.\\

\noindent $\bullet$ $(R \!\rightarrow$): As $\D$ ends with $(R \!\rightarrow)$, we have $\Delta = \{B \rightarrow C\}$, for some formulas $B$ and $C$, and $\D$ has the following form:
  \begin{prooftree}
    \AXC{$\D'$} \noLine
    \UIC{$\nabla \Gamma, B \Rightarrow C$}
    \RightLabel{$R \!\rightarrow$}
    \UIC{$\Gamma \Rightarrow B \rightarrow C$}
  \end{prooftree}
  By the induction hypothesis, there is a set $\Sigma'_A$ of the variants of $A$ such that $\LDL \vdash \Sigma'_A, \nabla \Gamma, B \Rightarrow C$. Then, by Lemma \ref{lem:id-adm}, Part $(v)$, we have $\LDL \vdash \Box \Sigma'_A, \Gamma \Rightarrow B \to C$.
Define $\Sigma_A=\Box \Sigma'_A$. It is clear that $\Sigma_A$ is also a set of the variants of $A$ and $\LDL \vdash \Gamma, \Sigma_A \Rightarrow \Delta$. \\

In the second case, the strategy is as follows: Apply the induction hypothesis to the premise(s) of the last rule to find $\Sigma_A'$ (resp. $\Sigma_A'$ and $\Sigma_A''$) if the rule has one premise (resp. two premises). Then, it is enough to set $\Sigma_A$ as $\Sigma_A'$ (resp. $\Sigma_A' \cup \Sigma_A''$). Using $(LW)$ and then the rule itself, it is easy to prove the claim. We explain this strategy for the rule $(L\!\rightarrow^n)$, and the case for the rest, including the $(cut)$ rule, follows similarly.\\

\noindent $\bullet$ $(L\!\to^n)$: As $\D$ ends with $(L \!\rightarrow^n)$, we have $\Gamma = \Gamma', \nabla^{n+1} (B \rightarrow C)$, for some multiset $\Gamma'$ and some formulas $B$ and $C$. Thus, $\D$ is in the form:
  \begin{prooftree}
    \noLine
    \AXC{$\D'$}
    \UIC{$\Gamma', \nabla^{n+1} (B \rightarrow C) \Rightarrow \nabla^n B$}
    \noLine
    \AXC{$\D''$}
    \UIC{$\Gamma', \nabla^{n+1} (B \rightarrow C), \nabla^n C \Rightarrow \Delta$}
    \RightLabel{$L \!\rightarrow ^n$}
    \BIC{$\Gamma', \nabla^{n+1} (B \rightarrow C) \Rightarrow \Delta$}
  \end{prooftree}
By the induction hypothesis, there are sets $\Sigma_A'$ and $\Sigma_A''$ of the variants of $A$ such that $\LDL \vdash \Sigma_A', \Gamma', \nabla^{n+1} (B \rightarrow C) \Rightarrow \nabla^n B$ and 
\[
\LDL \vdash \Sigma_A'', \Gamma', \nabla^{n+1} (B \rightarrow C), \nabla^n C \Rightarrow \Delta.
\]
Let $\D'_2$ and $\D''_2$ be the corresponding proof-trees, respectively. Now, consider the following proof-tree:
  { \footnotesize
  \begin{prooftree}
    \noLine
    \AXC{$\D_2'$}
    \UIC{$\Sigma'_A, \Gamma', \nabla^{n+1} (B \rightarrow C) \Rightarrow \nabla^n B$}
    \RightLabel{$LW$} 
    \UIC{$\Sigma'_A, \Sigma''_A, \Gamma', \nabla^{n+1} (B \rightarrow C) \Rightarrow \nabla^{n} B$}

    \noLine
    \AXC{$\D_2''$}
    \UIC{$\Sigma''_A, \Gamma', \nabla^{n+1} (B \rightarrow C), \nabla^n C \Rightarrow \Delta$}
    \RightLabel{$LW$} 
    \UIC{$\Sigma'_A, \Sigma''_A, \Gamma', \nabla^{n+1} (B \rightarrow C), \nabla^n C \Rightarrow \Delta$}
    \RightLabel{$L \!\rightarrow ^{n}$}
    \BIC{$\Sigma'_A, \Sigma''_A, \Gamma', \nabla^{n+1} (B \rightarrow C) \Rightarrow \Delta$}
  \end{prooftree}
  } 
\noindent Setting $\Sigma_A=\Sigma'_A \cup \Sigma''_A$, we proved the claim.

To prove $(i)$ from $(ii)$, first, using the rule $(N)$ and Lemma \ref{lem:id-adm} Part $(iv)$, it is clear that for any variant $B$ of $A$, we have $\, \Rightarrow A \vdash_\LDL \, \Rightarrow B$. Now, suppose $\LDL \vdash \Gamma, \Sigma_A \Rightarrow \Delta$. Then, $\, \Rightarrow A \vdash_\LDL \, \Rightarrow B$, for any $B \in \Sigma_A$. Therefore, by applying the cut rule, we reach $\, \Rightarrow A \vdash_{\LDLP} \Gamma \Rightarrow \Delta$.
\end{proof}

\section{Admissibility of $cut$ in $\LDL$}\label{sec: CutAddmissible}
In this section, we establish the admissibility of the cut rule in $\LDL$ by proving Theorem \ref{thm:cut-adm}. Using Lemma \ref{obs:ldl-cons-ext}, this also implies the cut admissibility in $\LDLS$. For technical reasons, we prove the admissibility of the following more general rule:
 \begin{prooftree}
     \AXC{$\Pi \Rightarrow A$}
     \AXC{$\Phi, \nabla^n A \Rightarrow \Lambda$}
     \BIC{$\Phi, \nabla^n \Pi \Rightarrow \Lambda$}
   \end{prooftree}
where $n \geq 0$ is an arbitrary natural number. Clearly, in the special case $n = 0$, the rule reduces to the standard cut rule. To establish the admissibility of this generalized cut rule, we first need to define the rank of a formula.

\begin{definition}
  The rank of a formula $A$, denoted by $\rho(A)$, is defined recursively as follows:
  \[ \rho(A) = \begin{cases}
  1 & \quad  A \in V \cup \{ \bot, \top \} \\
  \rho(B) & \quad  A = \nabla B \\
  max(\rho(B), \rho(C)) + 1 & \quad  A = B \circ C \quad (\circ \in \{ \land, \lor, \rightarrow, \supset \})
  \end{cases} \]
  Notice that $\nabla$ does not increase the rank of a formula.
\end{definition}


\begin{theorem}(Generalized cut admissibility)
  If $\LDL \vdash \Pi \Rightarrow A$ and $\LDL \vdash \Phi, \nabla^n A \Rightarrow \Lambda$, then $\LDL \vdash \Phi, \nabla^n \Pi \Rightarrow \Lambda$.
\end{theorem}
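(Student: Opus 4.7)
The plan is a standard nested induction adapted to the generalized statement, whose very formulation is designed to accommodate the rules that shift the $\nabla$-level of the cut formula. The primary induction is on the rank $\rho(A)$ of the cut formula, and the secondary induction is on $h(\mathcal{D}_1) + h(\mathcal{D}_2)$, where $\mathcal{D}_1$ and $\mathcal{D}_2$ are the given derivations of $\Pi \Rightarrow A$ and $\Phi, \nabla^n A \Rightarrow \Lambda$, respectively. Crucially, $n$ is a free parameter — not part of either measure — because $\rho(\nabla A) = \rho(A)$, so re-indexing $\nabla$'s leaves the rank invariant.

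I would then perform case analysis on the last rules of $\mathcal{D}_1$ and $\mathcal{D}_2$. Axiom cases reduce by weakening via $(LW)$, possibly with $(Rw)$. Whenever the cut formula $\nabla^n A$ is not principal in the last rule of $\mathcal{D}_2$ (and does not interact with the context through $(N)$ or $(R\!\to)$), I would permute the cut upward across that rule and close by the secondary IH, and symmetrically for $\mathcal{D}_1$; this dispenses with all purely propositional rules on side formulas, plus $(LW)$ and $(Rw)$. The three rules requiring a shift of $n$ are the interesting ones: (a) if $\mathcal{D}_1$ ends with $(N)$, then $\Pi = \nabla \Pi'$, $A = \nabla A'$, and I cut the premise $\Pi' \Rightarrow A'$ with $\Phi, \nabla^{n+1} A' \Rightarrow \Lambda$ at level $n+1$ (same rank, smaller $h(\mathcal{D}_1)$); (b) if $\mathcal{D}_2$ ends with $(N)$, then $n \geq 1$ and one cuts at level $n-1$ and reapplies $(N)$; (c) if $\mathcal{D}_2$ ends with $(R\!\to)$, the premise has the form $\nabla \Phi, \nabla^{n+1} A, B \Rightarrow C$, so cut at level $n+1$ and reapply $(R\!\to)$.

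The genuinely reducing principal-principal cases then reduce rank. The exemplar is $A = B \to C$ with $\mathcal{D}_1$ ending in $(R\!\to)$ from $\nabla \Pi, B \Rightarrow C$ and $\mathcal{D}_2$ ending in $(L\!\to^{n-1})$ (forcing $n \geq 1$) from $\Phi, \nabla^n(B\to C) \Rightarrow \nabla^{n-1} B$ and $\Phi, \nabla^n(B\to C), \nabla^{n-1} C \Rightarrow \Lambda$. I would first apply the secondary IH to eliminate $\nabla^n(B \to C)$ from both right-hand premises by cutting with $\Pi \Rightarrow B \to C$, obtaining $\Phi, \nabla^n \Pi \Rightarrow \nabla^{n-1} B$ and $\Phi, \nabla^n \Pi, \nabla^{n-1} C \Rightarrow \Lambda$. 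Next I would apply $(N)$ $(n{-}1)$ times to $\nabla \Pi, B \Rightarrow C$, yielding $\nabla^n \Pi, \nabla^{n-1} B \Rightarrow \nabla^{n-1} C$, and chain two cuts on $\nabla^{n-1} B$ and $\nabla^{n-1} C$. Because $\rho(B), \rho(C) < \rho(B \to C)$ and $\nabla$ does not change rank, these are covered by the primary IH. The resulting duplication of $\nabla^n \Pi$ in the antecedent is absorbed by the height-preserving admissibility of contraction (Theorem~\ref{thm:lc-adm}). The analogous reductions for $\wedge, \vee, \supset$ are parallel but simpler (no shifts of $n$ are forced), and the atomic base case $A \in V \cup \{\top, \bot\}$ is handled directly via $(Id^p)$, $(L\bot)$, $(R\top)$ together with $(LW)$.

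The principal obstacle is precisely the bookkeeping of the $\nabla$-level: the level $n$ shifts upward under $(R\!\to)$ in $\mathcal{D}_2$ and under $(N)$ in $\mathcal{D}_1$, and downward under $(N)$ in $\mathcal{D}_2$, so one cannot take $n$ as an inductive measure. Keeping $n$ as a free parameter while measuring only rank and height sum is what makes the induction go through, and it is the reason the \emph{generalized} form of cut — not merely $(cut)$ at $n = 0$ — is the right statement to prove by induction. Once this structure is in place, every individual case reduction is essentially routine and uses only the admissibility results already established: Lemma~\ref{lem:inv}, Theorem~\ref{thm:lc-adm}, and the rule $(N)$ itself.
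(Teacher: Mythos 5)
Your proposal is correct and follows essentially the same strategy as the paper: a primary induction on the rank $\rho(A)$ (with $\nabla$ rank-invariant) and a secondary induction on $h(\mathcal{D}_1)+h(\mathcal{D}_2)$, with $n$ kept as a free parameter, the same case split by principality, the same $\nabla$-level shifts for $(N)$ and $(R\!\to)$, and the same reduction of the principal $(R\!\to,L\!\to^{n-1})$ case via two lower-rank cuts absorbed by admissible contraction. The only cosmetic difference is that you lift $\nabla\Pi, B \Rightarrow C$ by explicit applications of $(N)$ before cutting, whereas the paper lets the level parameter of the generalized cut do that work.
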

\begin{proof}
Consider the lexicographic well-ordering on $\mathbb{N}^2$. By induction on this order, we show that for any pair $(r, h)$ of natural numbers, any formula $A$ of rank $r$ and any proof-trees $\D_1$ and $\D_2$ for
$\Pi \Rightarrow A$ and $\Phi, \nabla^n A \Rightarrow \Lambda$, respectively, if $h(\D_1) + h(\D_2) \leq h$, then there is a proof in $\LDL$ for $\Phi, \nabla^n \Pi \Rightarrow \Lambda$.

Let us prove the claim for $(r, h)$ and let $\D_1$ and $\D_2$ be proof-trees for
$\Pi \Rightarrow A$ and $\Phi, \nabla^n A \Rightarrow \Lambda$, respectively. For simplicity, we use the following notation. Let $\D_1'$ and $\D_2'$ be proof-trees for $\Pi' \Rightarrow A'$ and $\Phi', \nabla^{n'} A' \Rightarrow \Lambda'$ such that either $\rho(A') < \rho(A)$ or $\rho(A') = \rho(A)$ but $h(\D_1') + h(\D_2') <  h(\D_1) + h(\D_2)$. Then, as the induction hypothesis is applicable on $\D_1'$ and $\D_2'$, we can have a proof for $\Phi', \nabla^{n'} \Pi' \Rightarrow \Lambda'$. We denote this proof by:

\begin{prooftree}
  \AXC{$\D_1'$} \noLine
  \UIC{$\Pi' \Rightarrow A'$}
  \AXC{$\D_2'$} \noLine
  \UIC{$\Phi', \nabla^{n'} A' \Rightarrow \Lambda'$}
  \RightLabel{$\IH_{A'}$}
  \BIC{$ \Phi', \nabla^{n'} \Pi' \Rightarrow \Lambda'$}
\end{prooftree}
To prove the claim for $(r, h)$, we investigate all possible last rules for $\D_1$ and $\D_2$, and we partition all these cases into three parts:
  \begin{enumerate}
    \item The cases where $A$ is not principal in the last rule of $\D_1$. 

    \item The cases where $A$ is principal in the last rule of $\D_1$, but $\nabla^n A$ is not principal in the last rule of $\D_2$.

    \item The cases where $A$ is principal in the last rule of $\D_1$ and $\nabla^n A$ is principal in the last rule of $\D_2$.
  \end{enumerate}
In the first two parts, the general strategy is to apply the induction hypothesis to the immediate subtrees of $\mathcal{D}_1$ and $\mathcal{D}_2$, using the cut formula with the same rank as $A$. In the final part, however, the induction hypothesis is also applied to formulas with ranks smaller than that of $A$. Below, we provide a detailed explanation of the key cases in each part.

  Part \1. It is clear that $\mathcal{D}_1$ must either be one of the axioms $(Id^p)$ or $(L \bot)$, or it must end with one of the following rules: $(Rw)$, $(LW)$, $(L \wedge^r)$, $(L \vee^r)$, $(L \!\rightarrow^r)$, $(L \!\supset^r)$, or $(N)$. The case where $\mathcal{D}_1$ ends with $(Id^p)$ is straightforward. The case for $(L \bot)$ is impossible, as the succedent of $\Pi \Rightarrow A$ must be non-empty. If $\mathcal{D}_1$ ends with $(Rw)$, then $\Pi \Rightarrow$ is provable. Thus, using $(N)$, $(LW)$, and $(Rw)$, one can readily derive $\Phi, \nabla^n \Pi\Rightarrow \Delta$. For the remaining cases, we proceed with the following argument:\\

  \noindent $\bullet$ ($L W$): As $\D_1$ ends with $(LW)$, we have $\Pi = \Pi_1, \Pi_2$, for some multisets $\Pi_1$ and $\Pi_2$. Then $\D_1$ is in the form:
  \begin{prooftree}
		\noLine
		\AXC{$\D_1'$}
		\UIC{$\Pi_1 \Rightarrow A$}
		
		\RightLabel{$LW$}
		\UIC{$\Pi_1, \Pi_2 \Rightarrow A$}
  \end{prooftree}
  Now, consider the following proof-tree:
  \begin{prooftree}
    \noLine
    \AXC{$\D_1'$}
    \UIC{$\Pi_1 \Rightarrow A$}

    \noLine
    \AXC{$\D_2$}
    \UIC{$\Phi, \nabla^n A \Rightarrow \Lambda$}

    \RightLabel{$\IH_A$}
    \BIC{$\Phi, \nabla^n \Pi_1 \Rightarrow \Lambda$}

    \RightLabel{$LW$}
    \UIC{$\Phi, \nabla^n \Pi_1, \nabla^{n} \Pi_2 \Rightarrow \Lambda$}
  \end{prooftree}
which is what we wanted. \quad\\
  
  \noindent $\bullet$ ($L \wedge^r$): As $\D_1$ ends with $(L \wedge^r)$, we have $\Pi = \Gamma, \nabla^r (B \wedge C)$, for some multiset $\Gamma$ and some formulas $B$ and $C$. Then $\D_1$ is in the form:
  \begin{prooftree}
		\noLine
		\AXC{$\D_1'$}
		\UIC{$\Gamma, \nabla^r B, \nabla^r C \Rightarrow A$}
		
		\RightLabel{$L \wedge ^r$}
		\UIC{$\Gamma, \nabla^r (B \wedge C) \Rightarrow A$}
  \end{prooftree}
  Now, consider the following proof-tree:
  \begin{prooftree}
    \noLine
    \AXC{$\D_1'$}
    \UIC{$\Gamma, \nabla^r B, \nabla^r C \Rightarrow A$}

    \noLine
    \AXC{$\D_2$}
    \UIC{$\Phi, \nabla^n A\Rightarrow \Lambda$}

    \RightLabel{$\IH_A$}
    \BIC{$\Phi, \nabla^n \Gamma, \nabla^{n+r} B, \nabla^{n+r} C \Rightarrow \Lambda$}

    \RightLabel{$L \wedge ^{n+r}$}
    \UIC{$\Phi, \nabla^n \Gamma, \nabla^{n+r} (B \wedge C) \Rightarrow \Lambda$}
  \end{prooftree}
which is what we wanted. \quad\\

  \noindent $\bullet$ $(L\vee^r)$: As $\D_1$ ends with $(L \vee^r)$, we have $\Pi = \Gamma, \nabla^r (B \vee C)$, for some multiset $\Gamma$ and some formulas $B$ and $C$, and $\D_1$ is in the form:
	\begin{prooftree}
    \noLine
    \AXC{$\D_1'$}
    \UIC{$\Gamma, \nabla^r B \Rightarrow A$}
    
    \noLine
    \AXC{$\D_1''$}
    \UIC{$\Gamma, \nabla^r C \Rightarrow A$}
    
    \RightLabel{$L \vee ^r$}
    \BIC{$\Gamma, \nabla^r (B \vee C) \Rightarrow A$}
	\end{prooftree}
	Now, consider the following proof-tree:
	\begin{prooftree}
		\noLine
		\AXC{$\D_1'$}
		\UIC{$\Gamma, \nabla^r B \Rightarrow A$}
		
		\noLine
		\AXC{$\D_2$}
		\UIC{$\Phi, \nabla^n A \Rightarrow \Lambda$}
		
		\RightLabel{$\IH_A$}
		\BIC{$\Phi, \nabla^n \Gamma, \nabla^{n+r} B \Rightarrow \Lambda$}

		\noLine
		\AXC{$\D_1''$}
		\UIC{$\Gamma, \nabla^r C \Rightarrow A$}
		
		\noLine
		\AXC{$\D_2$}
		\UIC{$\Phi, \nabla^n A \Rightarrow \Lambda$}
		
		\RightLabel{$\IH_A$}
		\BIC{$\Phi, \nabla^n \Gamma, \nabla^{n+r} C \Rightarrow \Lambda$}

		\RightLabel{$L \vee ^{n+r}$}
		\BIC{$\Phi, \nabla^n \Gamma, \nabla^{n+r} (B \vee C) \Rightarrow \Lambda$}
	\end{prooftree}
which is what we wanted. \quad\\

  \noindent $\bullet$ $(L\!\to^r)$: As $\D_1$ ends with $(L \!\rightarrow^r)$, we have $\Pi = \Gamma, \nabla^{r+1} (B \rightarrow C)$, for some multiset $\Gamma$ and some formulas $B$ and $C$. Thus, $\D_1$ is in the form:
  \begin{prooftree}
    \noLine
    \AXC{$\D_1'$}
    \UIC{$\Gamma, \nabla^{r+1} (B \rightarrow C) \Rightarrow \nabla^r B$}
    \noLine
    \AXC{$\D_1''$}
    \UIC{$\Gamma, \nabla^{r+1} (B \rightarrow C), \nabla^r C \Rightarrow A$}
    \RightLabel{$L \!\rightarrow ^r$}
    \BIC{$\Gamma, \nabla^{r+1} (B \rightarrow C) \Rightarrow A$}
  \end{prooftree}
  Consider the following proof-tree:
  { \scriptsize
  \begin{prooftree}
    \noLine
    \AXC{$\D_1'$}
    \UIC{$\Gamma, \nabla^{r+1} (B \rightarrow C) \Rightarrow \nabla^r B$}
    \RightLabel{$N^{(*)}$} \doubleLine
    \UIC{$\nabla^n \Gamma, \nabla^{n+r+1} (B \rightarrow C) \Rightarrow \nabla^{n+r} B$}
    \RightLabel{$LW$}
    \UIC{$\Phi, \nabla^n \Gamma, \nabla^{n+r+1} (B \rightarrow C) \Rightarrow \nabla^{n+r} B$}

    \noLine
    \AXC{$\D_1''$}
    \UIC{$\Gamma, \nabla^{r+1} (B \rightarrow C), \nabla^r C \Rightarrow A$}
    \noLine
    \AXC{$\D_2$}
    \UIC{$\Phi, \nabla^n A \Rightarrow \Lambda$}
    \RightLabel{$\IH_A$}
    \BIC{$\Phi, \nabla^n \Gamma, \nabla^{n+r+1} (B \rightarrow C), \nabla^{n+r} C \Rightarrow \Lambda$}

    \RightLabel{$L \!\rightarrow ^{n+r}$}
    \BIC{$\Phi, \nabla^n \Gamma, \nabla^{n+r+1} (B \rightarrow C) \Rightarrow \Lambda$}
  \end{prooftree}
  } 
  \noindent where $N^{(*)}$ means multiple use of the rule $N$.  \quad\\

  \noindent $\bullet$ The case $(L\!\supset^r)$ is handled similarly to the case $(L\!\to^r)$. \quad\\

  \noindent $\bullet$ $(N)$: As $\D_1$ ends with $(N)$, we have $\Pi=\nabla \Gamma$ and $A = \nabla B$, for some multiset $\Gamma$ and some formula $ B$. Therefore, $\D_2$ proves $\Phi, \nabla^n (\nabla B) \Rightarrow \Lambda$ and $\D_1$ is in the form:
  \begin{prooftree}
    \AXC{$\D_1'$} \noLine
    \UIC{$\Gamma \Rightarrow B$} \RightLabel{$N$}
    \UIC{$\nabla \Gamma \Rightarrow \nabla B$}
  \end{prooftree}
  Consider the following proof-tree:
  \begin{prooftree}
    \AXC{$\D_1'$} \noLine
    \UIC{$\Gamma \Rightarrow B$}
    \AXC{$\D_2$} \noLine
    \UIC{$\Phi, \nabla^{n+1} B \Rightarrow \Lambda$}
    \RightLabel{$\IH_B$}
    \BIC{$\Phi, \nabla^{n+1} \Gamma \Rightarrow \Lambda$}
  \end{prooftree}
which is what we wanted. Note that $\rho(A) = \rho(B)$, and since the height of $\mathcal{D}'_1$ is smaller than that of $\mathcal{D}_1$, the induction hypothesis can be applied. \quad\\

  Part \2. In this part, we consider the cases where $\nabla^n A$ is not principal in the last rule of $\mathcal{D}_2$, while $A$ is principal in the last rule of $\mathcal{D}_1$. Consequently, $\mathcal{D}_1$ is either the axiom $(R\top)$ or it ends with one of the following rules: $(R \wedge)$, $(R \vee_1)$, $(R \vee_2)$, $(R \!\rightarrow)$, or $(R \!\supset)$. Note that this implies $A$ is neither an atom nor a formula of the form $\nabla B$. In this part, we fix $\mathcal{D}_1$ and apply the induction hypothesis to proof-trees with smaller height than $\mathcal{D}_2$.

  First, observe that $\mathcal{D}_2$ cannot be the axiom $(Id^p)$, because if it were, then $\nabla^n A$ would be an atom. This implies $n = 0$ and $A$ is an atom, which contradicts the fact that $A$ cannot be an atom. Second, note that $\mathcal{D}_2$ cannot be the axiom $(L \bot)$, because if it were, then $\nabla^n A$ would have to be $\bot$, making it principal, which is not the case. Third, $\mathcal{D}_2$ cannot be the axiom $(R \top)$, because the left-hand side of this axiom is empty, whereas the antecedent of $\Phi, \nabla^n A \Rightarrow \Lambda$ cannot be empty. Therefore, we conclude that $\mathcal{D}_2$ is not an axiom.

 For the sake of brevity, we will only elaborate on the cases where $\mathcal{D}_2$ ends with the rules $(L \wedge^r)$, $(R \vee_1)$, $(R \!\rightarrow)$, or $(N)$. The remaining cases are either straightforward or analogous.\\

  \noindent $\bullet$ $(L\wedge^r)$: As $\D_2$ ends with $(L \wedge^r)$ and the principal formula is different from $\nabla^n A$, we have $\Phi = \Sigma, \nabla^r (B \wedge C)$, for some multiset $\Sigma$ and some formulas $B$ and $C$. Moreover, $\D_2$ is in the form:
  \begin{prooftree}
    \AXC{$\D_2'$} \noLine
    \UIC{$\Sigma, \nabla^r B, \nabla^r C, \nabla^n A \Rightarrow \Lambda$}
    \RightLabel{$L \wedge ^r$}
    \UIC{$\Sigma, \nabla^r (B \wedge C), \nabla^n A \Rightarrow \Lambda$}
  \end{prooftree}
  By the induction hypothesis on $\D_1$ and $\D_2'$, we have  $\Sigma, \nabla^r B, \nabla^r C, \nabla^n \Pi \Rightarrow \Lambda$. By the rule $(L \wedge^r)$, we have $ \Sigma, \nabla^r (B \wedge C), \nabla^n \Pi \Rightarrow \Lambda$ which is what we wanted.\\

  \noindent $\bullet$ $(R \vee_1$): As $\D_2$ ends with $(R \vee_1)$, we have $\Lambda = \{B \vee C\}$, for some formulas $B$ and $C$, and $\D_2$ is in the form:
  \begin{prooftree}
    \AXC{$\D_2'$} \noLine
    \UIC{$\Phi, \nabla^n A \Rightarrow B$}
    \RightLabel{$R \vee_1$}
    \UIC{$\Phi, \nabla^n A \Rightarrow B \vee C$}
  \end{prooftree}
  Again, use the induction hypothesis for $\D_1$ and $\D_2'$ to get $\Phi, \nabla^n \Pi \Rightarrow B$. Then apply the rule $(R \vee_1)$ to reach the desired sequent.\\

  \noindent $\bullet$ $(R \!\rightarrow$): As $\D_2$ ends with $(R \!\rightarrow)$, we have $\Lambda = \{B \rightarrow C\}$, for some formulas $B$ and $C$, and $\D_2$ has the following form:
  \begin{prooftree}
    \AXC{$\D_2'$} \noLine
    \UIC{$\nabla \Phi, \nabla^{n+1} A, B \Rightarrow C$}
    \RightLabel{$R \!\rightarrow$}
    \UIC{$\Phi, \nabla^n A \Rightarrow B \rightarrow C$}
  \end{prooftree}
  By the induction hypothesis for $\D_1$ and $\D_2'$, we have $\nabla \Phi, \nabla^{n+1} \Pi, B \Rightarrow C$. Then, we can apply the rule $(R \!\rightarrow)$ to get the desired sequent $ \Phi, \nabla^n \Pi \Rightarrow B \rightarrow C$.\\

  \noindent $\bullet$ $(N)$: As $\D_2$ ends with $(N)$, we have $\Phi, \nabla^n A=\nabla \Gamma$ and $\Lambda=\nabla \Delta$, for some multisets $\Gamma$ and $\Delta$ and $\D_2$ has the following form:
  \begin{prooftree}
    \AXC{$\D_2'$} \noLine
    \UIC{$\Gamma \Rightarrow \Delta$}
    \RightLabel{$N$}
    \UIC{$\nabla \Gamma \Rightarrow \nabla \Delta$}
  \end{prooftree}
Note that $\nabla^n A \in \nabla \Gamma$. We claim that $n \geq 1$, because if $n = 0$, then $A \in \nabla \Gamma$, which implies that $A = \nabla B$ for some formula $B$. However, we have already established that this is impossible. Therefore, since $n \geq 1$, it follows that $\nabla^{n-1} A \in \Gamma$. Thus, we can express $\Gamma$ as $\Sigma, \nabla^{n-1} A$, where $\Sigma$ is a multiset. Therefore, $\D'_2$ proves $\Sigma, \nabla^{n-1} A \Rightarrow \Delta$. By the induction hypothesis for $\mathcal{D}_1$ and $\mathcal{D}_2'$, we have $\Sigma, \nabla^{n-1} \Pi \Rightarrow \Delta$. Applying $(N)$, we obtain $\nabla \Sigma, \nabla^n \Pi \Rightarrow \nabla \Delta$, which is the desired sequent.\\

  Part\3. In this part, we assume that $A$ is principal in the last rule of $\mathcal{D}_1$ and $\nabla^n A$ is principal in the last rule of $\mathcal{D}_2$. Clearly, the last rules in both $\mathcal{D}_1$ and $\mathcal{D}_2$ must match in order for $A$ and $\nabla^n A$ to be their principal formulas. Thus, if $R_1$ and $R_2$ are the last rules in $\mathcal{D}_1$ and $\mathcal{D}_2$, the only possible pairs for $(R_1, R_2)$ are $(R \wedge, L \wedge^n)$, $(R \vee_1, L \vee^n)$, $(R \vee_2, L \vee^n)$, $(R \!\supset, L \!\supset^n)$, or, if $n \geq 1$, $(R \!\rightarrow, L \!\rightarrow^{n-1})$. Note that the number $n$ in these rules corresponds to the same number $n$ in $\nabla^n A$. We now investigate each case separately in the following:\\

  \noindent $\bullet$ $(R \wedge, L \wedge ^n)$: As $\D_1$ ends with $(R \wedge)$ and $\D_2$ ends with $(L \wedge^n)$, we have $A=B \wedge C$, for some formulas $B$ and $C$ and $\D_1$ and $\D_2$ are in the form:
  \begin{prooftree}
    \noLine
    \AXC{$\D_1'$}
    \UIC{$\Pi \Rightarrow B$}
    \noLine
    \AXC{$\D_1''$}
    \UIC{$\Pi \Rightarrow C$}
    \RightLabel{$R \wedge$}
    \BIC{$\Pi \Rightarrow B \wedge C$}
    
    \noLine
    \AXC{$\D_2'$}
    \UIC{$\Phi, \nabla^n B, \nabla^n C \Rightarrow \Lambda$}
    \RightLabel{$L \wedge ^n$}
    \UIC{$\Phi, \nabla^n (B \wedge C) \Rightarrow \Lambda$}
    
    \noLine
    \BIC{}
  \end{prooftree}
  Now, consider the following proof-tree:
  \begin{prooftree}
    \AXC{$\D_1''$}\noLine
    \UIC{$\Pi \Rightarrow C$}
    \AXC{$\D_1'$}\noLine
    \UIC{$\Pi \Rightarrow B$}
    \AXC{$\D_2'$}\noLine
    \UIC{$\Phi, \nabla^n B, \nabla^n C \Rightarrow \Lambda$}
    \RightLabel{$\IH_B$}
    \BIC{$\Phi, \nabla^n \Pi, \nabla^n C \Rightarrow \Lambda$}
    \RightLabel{$\IH_C$}
    \BIC{$\Phi, (\nabla^n \Pi)^2 \Rightarrow \Lambda$}
    \RightLabel{$Lc^{(*)}$}\doubleLine
    \UIC{$\Phi, \nabla^n \Pi \Rightarrow \Lambda$}
  \end{prooftree}
  where $Lc^{(*)}$ means a repeated application of the rule $(Lc)$, which is admissible in $\LDL$ by Theorem \ref{thm:lc-adm}.  Notice that both $B$ and $C$ have a lower rank than $A$. Therefore, regardless of the height of the proof-trees, the use of the induction hypothesis is permitted.\\
  
    \noindent $\bullet$ $(R \vee_1, L \vee^n)$: As $\D_1$ ends with $(R \vee_1)$ and $\D_2$ ends with $(L \vee^n)$, we have $A = B \vee C$, for some formulas $B$ and $C$ and $\D_1$ and $\D_2$ are in the form:
  \begin{prooftree}
    \noLine
    \AXC{$\D_1'$}
    \UIC{$\Pi \Rightarrow B$}
    \RightLabel{$R \vee_1$}
    \UIC{$\Pi \Rightarrow B \vee C$}
    \noLine
    \AXC{$\D_2'$}
    \UIC{$\Phi, \nabla^n B \Rightarrow \Lambda$}
    \noLine
    \AXC{$\D_2''$}
    \UIC{$\Phi, \nabla^n C \Rightarrow \Lambda$}
    \RightLabel{$L \vee ^n$}
    \BIC{$\Phi, \nabla^n (B \vee C) \Rightarrow \Lambda$}
    \noLine
    \BIC{}
  \end{prooftree}
  Then, using the induction hypothesis for $B$, we have
  \begin{prooftree}
    \AXC{$\D_1'$}\noLine
    \UIC{$\Pi \Rightarrow B$}
    \AXC{$\D_2'$}\noLine
    \UIC{$\Phi, \nabla^n B \Rightarrow \Lambda$}
    \RightLabel{$\IH_B$}
    \BIC{$\Phi, \nabla^n \Pi \Rightarrow \Lambda$}
  \end{prooftree}
which is what we wanted. The proof of the case $(R \vee_2, L \vee^n)$ is similar.\\
  
  \noindent $\bullet$ $(R \!\rightarrow, L \!\rightarrow ^{n-1})$: As $\D_1$ and $\D_2$ end with $(R \!\rightarrow)$ and $(L \!\rightarrow^{n-1})$, respectively, we have $A = B \rightarrow C$, for some formulas $B$ and $C$, and $\D_1$ and $\D_2$ are in the form:
  { \scriptsize
  \begin{prooftree}
    \noLine
    \AXC{$\D_1'$}
    \UIC{$\nabla \Pi, B \Rightarrow C$}
    \RightLabel{$R \!\rightarrow$}
    \UIC{$\Pi \Rightarrow B \rightarrow C$}
    \noLine
    \AXC{$\D_2'$}
    \UIC{$\Phi, \nabla^n (B \rightarrow C) \Rightarrow \nabla^{n-1} B$}
    \noLine
    \AXC{$\D_2''$}
    \UIC{$\Phi, \nabla^n (B \rightarrow C), \nabla^{n-1} C \Rightarrow \Lambda$}
    \RightLabel{$L \!\rightarrow^{n-1}$}
    \BIC{$\Phi,  \nabla^n (B \rightarrow C) \Rightarrow \Lambda$}
    \noLine
    \BIC{}
  \end{prooftree}
  }
\noindent Consider the following proof-tree and call it $\D$:
  \begin{prooftree}
    \AXC{$\D_1'$} \noLine
    \UIC{$\nabla \Pi, B \Rightarrow C$}
  
    \AXC{$\D_1$} \noLine
    \UIC{$\Pi \Rightarrow B \rightarrow C$}
    \AXC{$\D_2''$} \noLine
    \UIC{$\Phi, \nabla^n (B \rightarrow C), \nabla^{n-1} C \Rightarrow \Lambda$}
    \RightLabel{$\IH_{B \rightarrow C}$}
    \BIC{$\Phi, \nabla^n \Pi, \nabla^{n-1} C \Rightarrow \Lambda$} \RightLabel{$\IH_C$}
    \BIC{$\Phi, (\nabla^n \Pi)^2, \nabla^{n-1} B \Rightarrow \Lambda$}
  \end{prooftree}
Notice that the first use of the induction hypothesis is for $\mathcal{D}_1$ and $\mathcal{D}_2''$, which is permitted because $\mathcal{D}_2''$ has a smaller height. The second use is also allowed since $C$ has a lower rank than $A$. Now, consider the following proof-tree:
  \begin{prooftree}
    \AXC{$\D_1$} \noLine
    \UIC{$\Pi \Rightarrow B \rightarrow C$}
    \AXC{$\D_2'$} \noLine
    \UIC{$\Phi, \nabla^n (B \rightarrow C) \Rightarrow \nabla^{n-1} B$}
    \RightLabel{$\IH_{B \rightarrow C}$}
    \BIC{$\Phi, \nabla^n \Pi \Rightarrow \nabla^{n-1} B$}
  
    \AXC{$\D$}
  
    \RightLabel{$\IH_{\nabla^{n-1} B}$}
    \BIC{$\Phi^2, (\nabla^n \Pi)^3 \Rightarrow \Lambda$}
  
    \RightLabel{$Lc^{(*)}$} \doubleLine
    \UIC{$\Phi, \nabla^{n} \Pi \Rightarrow \Lambda$}
  \end{prooftree}
Here, $Lc^{(*)}$ refers to a repeated application of the rule $(Lc)$. As before, the first application of the induction hypothesis is allowed because the height of $\mathcal{D}_2'$ is smaller than the height of $\mathcal{D}_2$. For the second application, observe that the rank of $\nabla^{n-1} B$ is equal to the rank of $B$, which is smaller than the rank of $A$.\\
  
 \noindent $\bullet$ $(R \!\supset, L \!\supset^n)$: As $\D_1$ and $\D_2$ end with $(R \!\supset)$ and $(L \!\supset^n)$, respectively, we have $A = B \supset C$, for some formulas $B$ and $C$, and $\D_1$ and $\D_2$ are in the form:
  \begin{prooftree}
    \noLine
    \AXC{$\D_1'$}
    \UIC{$\Pi, B \Rightarrow C$}
    \RightLabel{$R \!\supset$}
    \UIC{$\Pi \Rightarrow B \supset C$}
    \noLine
    \AXC{$\D_2'$}
    \UIC{$\Phi, \nabla^n (B \supset C) \Rightarrow \nabla^n B$}
    \noLine
    \AXC{$\D_2''$}
    \UIC{$\Phi, \nabla^n C \Rightarrow \Lambda$}
    \RightLabel{$L \!\supset^n$}
    \BIC{$\Phi,  \nabla^n (B \supset C) \Rightarrow \Lambda$}
    \noLine
    \BIC{}
  \end{prooftree}
  Let $\D$ be the following proof-tree:
  \begin{prooftree}
    \AXC{$\D_1'$} \noLine
    \UIC{$\Pi, B \Rightarrow C$}
  
    \AXC{$\D_2''$} \noLine
    \UIC{$\Phi, \nabla^n C \Rightarrow \Lambda$}
    \RightLabel{$\IH_C$}
    \BIC{$\Phi, \nabla^n \Pi, \nabla^n B \Rightarrow \Lambda$}
  \end{prooftree}
  Now, consider the following proof-tree:
  \begin{prooftree}
    \AXC{$\D_1$} \noLine
    \UIC{$\Pi \Rightarrow B \supset C$}
    \AXC{$\D_2'$} \noLine
    \UIC{$\Phi, \nabla^n (B \supset C) \Rightarrow \nabla^n B$}
    \RightLabel{$\IH_{B \supset C}$}
    \BIC{$\Phi, \nabla^n \Pi \Rightarrow \nabla^n B$}
  
    \AXC{$\D$}
  
    \RightLabel{$\IH_{\nabla^n B}$}
    \BIC{$ \Phi^2, (\nabla^n \Pi)^2 \Rightarrow \Lambda$}
  
    \RightLabel{$Lc^{(*)}$} \doubleLine
    \UIC{$ \Phi, \nabla^n \Pi \Rightarrow \Lambda$}
  \end{prooftree}
where $Lc^{(*)}$ denotes a repeated application of the rule $(Lc)$. This completes the proof for all cases. Thus, we have constructed a proof-tree for the desired sequent for every possible pair of end rules for $\mathcal{D}_1$ and $\mathcal{D}_2$. This concludes the proof of the theorem.
\end{proof}

In the rest of this paper, we utilize the cut-free calculi $\LDL$ and $\LDLS$, designed for the logics $\ldl$ and $\ldls$, respectively, in order to establish certain properties of these logics.

\section{Visser's Rules and Disjunction Property} \label{sec: AdmissibleRules}
Recall that intuitionistic propositional logic has the property that for any finite set $I$, if 
\[
\mathsf{IPC} \vdash [\bigwedge_{i \in I} (A_i \supset B_i)] \supset (C \vee D),
\]
then either $\mathsf{IPC} \vdash [\bigwedge_{i \in I} (A_i \supset B_i)] \supset C$ or $\mathsf{IPC} \vdash [\bigwedge_{i \in I} (A_i \supset B_i)] \supset D$ or $\mathsf{IPC} \vdash [\bigwedge_{i \in I} (A_i \supset B_i)] \supset A_i$, for some $i \in I$. These admissible rules are called \emph{Visser's rules} and play a crucial role, as they provide a basis for all admissible rules of $\mathsf{IPC}$ \cite{Rosalie}. Moreover, note that for the empty $I$, as $\bigwedge \varnothing =\top$, we obtain the well-known \emph{disjunction property} of $\mathsf{IPC}$: if $\mathsf{IPC} \vdash C \vee D$, then either $\mathsf{IPC} \vdash C$ or $\mathsf{IPC} \vdash D$.

In this section, we introduce generalizations of Visser's rules and prove their admissibility in the logics $\ldl$ and $\ldls$. Since the languages $\mathcal{L}$ and $\mathcal{L}_*$ are more expressive than the usual propositional language, we consider three types of Visser's rules: disjunctive, implicative, and Heyting implicative.

\begin{theorem}\label{thm:visser}(Disjunctive Visser's rules for $\ldl$)
  Let $I$ and $J$ be finite sets (possibly empty), $\{m_i\}_{i \in I}$ and $\{n_j\}_{j \in J}$ be sequences of natural numbers and 
  \[
  X=\bigwedge_{i \in I} \nabla^{m_i}(A_i \supset B_i) \wedge \bigwedge_{j \in J} \nabla^{n_j} (C_j \rightarrow D_j).
  \]
  If 
  $
  \ldl \vdash  X \rightarrow E \vee F,
  $
 then one of the following holds:
    \begin{itemize}
      \item[$\bullet$] $\ldl \vdash  X \rightarrow \nabla^{m_i} A_i$, for some $i \in I$,
      \item[$\bullet$] $\ldl \vdash  X \rightarrow \nabla^{n_j-1} C_j$, for some $j \in J$, where $n_j \geq 1$,
      \item[$\bullet$] $\ldl \vdash  X \rightarrow E$, 
      \item[$\bullet$] $\ldl \vdash  X \rightarrow F$.
      \end{itemize}
\end{theorem}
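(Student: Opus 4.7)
The plan is to translate the statement into the cut-free sequent calculus $\LDL$ and then induct on proof height. By Corollary~\ref{LogicAndCalculi} and Lemma~\ref{lem:arrow-bot} $(i)$, the hypothesis $\ldl \vdash X \to E \vee F$ is equivalent to $\LDL \vdash X \Rightarrow E \vee F$. Iterating the inversion Lemma~\ref{lem:inv} $(i)$ with $n=0$ on the outer $\wedge$'s of $X$, we replace this by a cut-free proof of $\Gamma \Rightarrow E \vee F$, where $\Gamma$ is the multiset whose elements are the implicational conjuncts $\nabla^{m_i}(A_i \supset B_i)$ and $\nabla^{n_j}(C_j \to D_j)$ of $X$, together with some occurrences of $\top$ if $I$ or $J$ is empty.

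I then prove, by induction on the height of a cut-free $\LDL$-proof, the following strengthening: for any multiset $\Gamma$ consisting only of the above implicational formulas and occurrences of $\top$, if $\LDL \vdash \Gamma \Rightarrow E \vee F$ then $\LDL$ also proves at least one of $\Gamma \Rightarrow E$, $\Gamma \Rightarrow F$, $\Gamma \Rightarrow \nabla^{m_i} A_i$ for some $i \in I$, or $\Gamma \Rightarrow \nabla^{n_j-1} C_j$ for some $j \in J$ with $n_j \geq 1$. Two observations strongly restrict the last rule: the succedent $E \vee F$ has $\vee$ as its main connective, ruling out $(R\wedge)$, $(R\!\to)$, $(R\!\supset)$, $(N)$, $(Id^p)$, and $(R\top)$; and the antecedent contains no conjunction, disjunction, or $\bot$, and $\top$ is never principal in any left rule, ruling out $(L\wedge^n)$, $(L\vee^n)$, and $(L\bot)$. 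Hence the last rule must be one of $(LW)$, $(Rw)$, $(R\vee_1)$, $(R\vee_2)$, $(L\!\supset^n)$, or $(L\!\to^n)$.

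The cases $(R\vee_1)$ and $(R\vee_2)$ deliver $\Gamma \Rightarrow E$ or $\Gamma \Rightarrow F$ directly from the premise, and the case $(Rw)$ yields $\Gamma \Rightarrow\,$, from which $(Rw)$ again gives $\Gamma \Rightarrow E$. The decisive cases are $(L\!\supset^n)$ and $(L\!\to^n)$: since the antecedent admits only the specified implicational shapes, the principal formula is forced to be some $\nabla^{m_{i_0}}(A_{i_0} \supset B_{i_0})$ with $i_0 \in I$ in the first case, and some $\nabla^{n_{j_0}}(C_{j_0} \to D_{j_0})$ with $j_0 \in J$ and $n_{j_0} \geq 1$ in the second, and in either case the left premise is already one of the desired sequents, namely $\Gamma \Rightarrow \nabla^{m_{i_0}} A_{i_0}$ or $\Gamma \Rightarrow \nabla^{n_{j_0}-1} C_{j_0}$. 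The $(LW)$ case follows by applying the induction hypothesis to the strictly shorter premise, whose antecedent is a sub-multiset of $\Gamma$ and hence still of the required restricted form, and then reapplying $(LW)$ to restore $\Gamma$.

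Finally, from $\LDL \vdash \Gamma \Rightarrow S$ for one of the targets $S$, a few applications of $(L\wedge^0)$ reassemble the conjunctions of $X$ on the left to yield $\LDL \vdash X \Rightarrow S$, and Lemma~\ref{lem:arrow-bot} $(i)$ converts this back into $\ldl \vdash X \to S$. I foresee no significant obstacle: the argument is essentially a direct exploitation of the cut-freeness and $\nabla$-analyticity of $\LDL$, which, together with the very restricted shapes of $\Gamma$ (only implications and $\top$) and of the succedent $E \vee F$ (a disjunction), forces the last rule to fall into the short list above, each entry of which either supplies a desired conclusion immediately from a premise or reduces to the induction hypothesis.
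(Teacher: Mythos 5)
Your proposal is correct and follows essentially the same route as the paper: reduce to a cut-free $\LDL$-proof of $\Gamma \Rightarrow E \vee F$ with $\Gamma$ the multiset of implicational conjuncts of $X$, then induct on proof height, observing that the shapes of antecedent and succedent force the last rule to be one of $(LW)$, $(Rw)$, $(R\vee_1)$, $(R\vee_2)$, $(L\!\supset^{m_i})$, or $(L\!\to^{n_j-1})$, each of which either yields a target sequent from a premise or reduces to the induction hypothesis. The only (immaterial) difference is that you pass from $X$ to $\Gamma$ via the height-preserving inversion of $(L\wedge^0)$, whereas the paper cuts against $\Gamma \Rightarrow \bigwedge\Gamma$.
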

\begin{proof}
First, note that for any multiset $\Gamma$, we have $\LDL \vdash \Gamma \Rightarrow \bigwedge \Gamma$. Therefore, using Corollary \ref{LogicAndCalculi}, Lemma \ref{lem:arrow-bot}, Part $(i)$, and the admissibility of $(cut)$ in $\LDL$, it is enough to prove that for any number $h \in \mathbb{N}$ and any sequent $\Gamma \Rightarrow E \vee F$, where $\Gamma$ has the form $\{\nabla^{m_i}(A_i \supset B_i) \}_{i \in I} \cup \{ \nabla^{n_i} (C_j \rightarrow D_j) \}_{j \in J}$, if $\LDL \vdash_h \Gamma \Rightarrow E \vee F$, then one of the following holds:
\begin{itemize}
    \item[$\bullet$]
 $\LDL \vdash \Gamma \Rightarrow \nabla^{m_i}A_i$, for some $i \in I$,
     \item[$\bullet$]
     $\LDL \vdash \Gamma \Rightarrow \nabla^{n_j-1} C_j$, for some $j \in J$, where $n_j \geq 1$,
    \item[$\bullet$]
    $\LDL \vdash \Gamma \Rightarrow E$, 
     \item[$\bullet$]
     $\LDL \vdash \Gamma \Rightarrow F$.
 \end{itemize}
We prove the claim by induction on $h$. First, note that $h \neq 0$, as $\Gamma \Rightarrow E \vee F$ cannot be an axiom, given its form. Hence, the base case of the induction trivially holds. For the inductive step, let $\D$ be a proof of $\Gamma \Rightarrow E \vee F$ in $\LDL$ with the height at most $h > 0$. Based on the form of formulas in the sequent $\Gamma \Rightarrow E \vee F$, the last rule in $\D$ must be one of the following: $(LW)$, $(Rw)$, $(R \vee_1)$, $(R \vee_2)$, $(L\!\supset^{m_i})$, for some $i \in I$, or $(L\!\rightarrow^{n_j-1})$, for some $j \in J$, where $n_j \geq 1$.

If the last rule in $\mathcal{D}$ is $(LW)$, then there is a proof $\mathcal{D}'$ of height $h' < h$ for the sequent $\Sigma \Rightarrow E \vee F$, where $\Sigma \subseteq \Gamma$. The claim then follows easily by the induction hypothesis for $h'$.  

For $(Rw)$, we have a proof-tree for $(\Gamma \Rightarrow \,)$. Applying $(Rw)$, we can easily prove $\Gamma \Rightarrow E$.

The cases $(R \vee_1)$, $(R \vee_2)$, $(L\!\supset^{m_i})$, and $(L\!\rightarrow^{n_j-1})$ prove the claim directly. We only explain the case $(L\!\rightarrow^{n_j-1})$, as the rest follow similarly.  
Suppose $\mathcal{D}$ ends with $(L\!\rightarrow^{n_j-1})$, for some $j \in J$, where $n_j \neq 0$. Then, $\mathcal{D}$ has the following form:
  \begin{prooftree}
    \AXC{$\Gamma \Rightarrow \nabla^{n_j-1} C_j$}
    \AXC{$\Gamma, \nabla^{n_j-1} D_j \Rightarrow E \vee F$}
    \BIC{$\Gamma \Rightarrow E \vee F$}
  \end{prooftree}
Hence, $\Gamma \Rightarrow \nabla^{n_j-1} C_j$ is provable in $\LDL$.
\end{proof}

\begin{theorem}\label{thm:visserII}(Disjunctive Visser's rules for $\ldls$)
  Let $I$ be a finite set (possibly empty), $\{m_i\}_{i \in I}$  be a sequence of natural numbers and
   \[
  X=\bigwedge_{i \in I} \nabla^{m_i}(A_i \to B_i).
  \]
  If 
  $
  \ldls \vdash  X \rightarrow C \vee D,
  $
 then one of the following holds:
    \begin{itemize}
      \item[$\bullet$] $\ldls \vdash  X \rightarrow \nabla^{m_i-1} A_i$, for some $i \in I$, where $m_i \geq 1$,
      \item[$\bullet$] $\ldls \vdash   X \rightarrow C$, 
      \item[$\bullet$]  $\ldls \vdash  X \rightarrow D$. 
      \end{itemize}
\end{theorem}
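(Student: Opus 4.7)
The plan is to mirror the proof of Theorem \ref{thm:visser}, exploiting the fact that $\LDLS$ is strictly simpler: it lacks the rules $(L\!\supset^n)$ and $(R\!\supset)$, so the only left rule that can act on a formula of the form $\nabla^{m}(A\to B)$ in the antecedent is $(L\!\to^{m-1})$ (which, crucially, requires $m \geq 1$). First I would reduce the statement to a claim about $\LDLS$-derivations: using Corollary \ref{LogicAndCalculi}, the implication-right direction of Lemma \ref{lem:arrow-bot}(i), cut admissibility in $\LDLS$, and the trivial fact that $\LDLS \vdash \Gamma \Rightarrow \bigwedge \Gamma$, it suffices to show that whenever
\[
\LDLS \vdash_h \Gamma \Rightarrow C \vee D
\quad\text{with}\quad
\Gamma = \{\nabla^{m_i}(A_i \to B_i)\}_{i \in I},
\]
one of $\Gamma \Rightarrow \nabla^{m_i - 1} A_i$ (for some $i \in I$ with $m_i \geq 1$), $\Gamma \Rightarrow C$, or $\Gamma \Rightarrow D$ is already derivable in $\LDLS$.

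The main argument is then an induction on $h$. The base case $h = 0$ is vacuous because $\Gamma \Rightarrow C\vee D$ does not match the form of any axiom $(Id^p)$, $(L\bot)$, or $(R\top)$ of $\LDLS$. For the inductive step I would enumerate the possible last rules of a derivation of $\Gamma \Rightarrow C \vee D$: given the shape of the antecedent and the succedent, these are exactly $(LW)$, $(Rw)$, $(R\vee_1)$, $(R\vee_2)$, and $(L\!\to^{m_i-1})$ for some $i \in I$ with $m_i \geq 1$. The cases $(R\vee_1)$ and $(R\vee_2)$ respectively yield $\Gamma \Rightarrow C$ and $\Gamma \Rightarrow D$ directly from their premise. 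The $(Rw)$ case produces a proof of $\Gamma \Rightarrow$, to which reapplying $(Rw)$ gives $\Gamma \Rightarrow C$. The $(LW)$ case gives a shorter derivation of $\Sigma \Rightarrow C \vee D$ with $\Sigma \subseteq \Gamma$ of the same antecedent form, to which the induction hypothesis applies, and then weakening recovers the desired conclusion over $\Gamma$. Finally, if the last rule is $(L\!\to^{m_i - 1})$ with principal formula $\nabla^{m_i}(A_i \to B_i)$ (so $m_i \geq 1$), then its left premise is exactly $\Gamma \Rightarrow \nabla^{m_i - 1} A_i$, delivering the first bullet of the theorem.

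There is no substantive obstacle; the proof is a strict simplification of the one given for Theorem \ref{thm:visser}, since no $(L\!\supset^n)$ case arises and the $N$-rule cannot be the last step on a sequent whose succedent is $C \vee D$ with a nonempty antecedent of the prescribed form that includes formulas $A_i \to B_i$ with $m_i = 0$ (such formulas cannot be of the form $\nabla X$ and hence block $(N)$). The only point to verify with any care is that the enumeration of possible last rules is exhaustive, which follows by direct inspection of the rules of $\LDLS$ listed in Figure \ref{LDL}.
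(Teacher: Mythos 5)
Your argument is correct, but it is not the route the paper takes: the paper proves Theorem \ref{thm:visserII} in two lines, by instantiating Theorem \ref{thm:visser} with $I=\varnothing$ (so that $X$ contains only $\to$-formulas, which live in $\mathcal{L}_*$) and then invoking the conservativity of $\ldl$ over $\ldls$ from Corollary \ref{LogicAndCalculi} to pull each of the three possible conclusions back down to $\ldls$. You instead rerun the height induction of Theorem \ref{thm:visser} directly inside $\LDLS$, and your case analysis is sound: the reduction to sequents via Corollary \ref{LogicAndCalculi}, Lemma \ref{lem:arrow-bot}(i) and cut admissibility mirrors the paper's setup for Theorem \ref{thm:visser}, and your enumeration of possible last rules ($LW$, $Rw$, $R\vee_1$, $R\vee_2$, $L\!\to^{m_i-1}$) is exhaustive. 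One small clarification: your stated reason for excluding $(N)$ leans on antecedent formulas with $m_i=0$, but the clean and fully general reason is already contained in the first clause of your sentence --- the succedent $C\vee D$ is a disjunction, never of the form $\nabla E$, so $(N)$ can never conclude such a sequent regardless of the antecedent. As for the trade-off: the paper's derivation is essentially free once Theorem \ref{thm:visser} and conservativity are in place, whereas your direct proof is self-contained within $\LDLS$, does not route through the larger system, and makes visible exactly which rules of $\LDLS$ drive the result; either is acceptable, but you could have saved yourself the work by noticing that the $\ldl$ statement already covers this case.
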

\begin{proof}
The proof follows directly from Theorem \ref{thm:visser} and the conservativity of $\ldl$ over $\ldls$, as established in Corollary \ref{LogicAndCalculi}.
\end{proof}

\begin{theorem}[Disjunction property]
	For any formulas $A$ and $B$, if $\ldl \vdash A \vee B$ then either $\ldl \vdash A$ or $\ldl \vdash B$. The same also holds for $\ldls$.
\end{theorem}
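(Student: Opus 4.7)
The plan is to obtain the disjunction property as the $I = J = \varnothing$ specialization of Theorem \ref{thm:visser} (for $\ldl$) and Theorem \ref{thm:visserII} (for $\ldls$), after one small reformulation. The bridging fact I will use is that $\ldl \vdash C$ is equivalent to $\ldl \vdash \top \to C$ for every formula $C$. The forward direction follows by applying $(LW)$ to the proof of $\Rightarrow C$ to produce $\top \Rightarrow C$, and then $(R\!\to)$ with empty context $\Gamma$. For the converse, Lemma \ref{lem:arrow-bot}(i) converts $\Rightarrow \top \to C$ into $\top \Rightarrow C$, and a cut with the axiom $\Rightarrow \top$, admissible by Theorem \ref{thm:cut-adm}, collapses this to $\Rightarrow C$. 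The same derivations are valid over $\mathcal{L}_*$, since no occurrence of $\supset$ is introduced.

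Given $\ldl \vdash A \vee B$, I first rewrite it as $\ldl \vdash \top \to (A \vee B)$ using the forward bridging step. Now I apply Theorem \ref{thm:visser} with $I = J = \varnothing$, so that $X = \bigwedge \varnothing = \top$ by the paper's convention. Since the first and second bullets of the conclusion quantify over empty index sets, they are vacuous, leaving only $\ldl \vdash \top \to A$ or $\ldl \vdash \top \to B$. Applying the reverse bridging step to whichever disjunct holds yields $\ldl \vdash A$ or $\ldl \vdash B$, which is the claim. For $\ldls$, the argument is identical with Theorem \ref{thm:visserII} in place of Theorem \ref{thm:visser}; one could equivalently invoke the conservativity of $\ldl$ over $\ldls$ from Corollary \ref{LogicAndCalculi} to transport the statement directly.

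I do not expect a genuine obstacle here: the substantive proof-theoretic content — the induction on proof-height that inspects the possible last rules of a proof ending with a disjunctive succedent — has already been discharged inside Theorems \ref{thm:visser} and \ref{thm:visserII}. The only minor points of care are confirming that the empty-conjunction convention $\bigwedge \varnothing = \top$ sits correctly in the hypothesis of the Visser theorems, and verifying that the bridging step in the $\ldls$ case does not smuggle in a use of Heyting implication, which it does not since $\top \to (\cdot)$ lies in $\mathcal{L}_*$.
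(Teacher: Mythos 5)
Your proof is correct. The only point of difference from the paper is organizational: the paper does not derive the disjunction property as a formal corollary of Theorem \ref{thm:visser}, but instead re-runs the (now much simpler) height induction directly on a cut-free proof of $\Rightarrow A \vee B$, where the only possible last rules are $(Rw)$, $(R\vee_1)$, $(R\vee_2)$, and a degenerate $(LW)$, each of which immediately yields $\Rightarrow A$ or $\Rightarrow B$. Your route instead specializes the already-stated Theorem \ref{thm:visser} to $I=J=\varnothing$ and bridges between $\vdash C$ and $\vdash \top\to C$ via $(LW)$, $(R\!\to)$, Lemma \ref{lem:arrow-bot}(i), and admissible cut; this is exactly the reduction the paper gestures at informally in the introduction to Section \ref{sec: AdmissibleRules} when it notes that the empty-index case of Visser's rules gives the disjunction property. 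What your version buys is that no induction needs to be repeated; what it costs is the small bridging argument, plus the pedantic observation that with both index sets empty the formula $X$ in Theorem \ref{thm:visser} is literally $\top\wedge\top$ rather than $\top$, which is harmless since the two are interderivable (or one simply reads the empty conjunction of both families as $\top$). Your handling of the $\ldls$ case, by either route (the $\mathcal{L}_*$-internal bridge through Theorem \ref{thm:visserII}, or conservativity via Corollary \ref{LogicAndCalculi}), is also sound.
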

\begin{proof}
The proof is similar to  that of Theorem \ref{thm:visser}.
\end{proof}

\begin{theorem}\label{thm:arrow-visser}(Implicative Visser's rules for $\ldl$)
Let $I$ and $J$ be finite sets (possibly empty) and $\{m_i\}_{i \in I}$ and $\{n_j\}_{j \in J}$ be sequences of natural numbers. If 
  \[
  \ldl \vdash [\bigwedge_{i \in I} \nabla^{m_i}(A_i \supset B_i) \wedge \bigwedge_{j \in J} \nabla^{n_j} (C_j \rightarrow D_j)] \rightarrow \nabla^k(E \to F),
  \]
  then one of the following holds:
    \begin{itemize}
      \item[$\bullet$] $\ldl \vdash  [\bigwedge_{i \in I} \nabla^{m_i}(A_i \supset B_i) \wedge \bigwedge_{j \in J} \nabla^{n_j} (C_j \rightarrow D_j)] \rightarrow \nabla^{m_i} A_i$, for some $i \in I$,
      \item[$\bullet$] $\ldl \vdash [\bigwedge_{i \in I} \nabla^{m_i}(A_i \supset B_i) \wedge \bigwedge_{j \in J} \nabla^{n_j} (C_j \rightarrow D_j)] \rightarrow \nabla^{n_j-1} C_j$, for some $j \in J$, where $n_j \geq 1$,
      \item[$\bullet$] $\ldl \vdash [\bigwedge_{i \in I'} \nabla^{m_i-k+1}(A_i \supset B_i) \wedge \bigwedge_{j \in J'} \nabla^{n_j-k+1} (C_j \rightarrow D_j) \wedge E] \rightarrow F$, for some $I' \subseteq I$ and $J' \subseteq J$ such that $m_i, n_j \geq k$, for any $i \in I'$ and $j \in J'$.
      \end{itemize}
\end{theorem}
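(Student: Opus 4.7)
The plan is to mimic the proof of Theorem \ref{thm:visser} as closely as possible. First, using Corollary \ref{LogicAndCalculi}, Lemma \ref{lem:arrow-bot}(i), the admissibility of $(cut)$ (Theorem \ref{thm:cut-adm}), and the provability of $\LDL \vdash \Gamma \Rightarrow \bigwedge \Gamma$ (together with iterated $(L\wedge^0)$ in the reverse direction), I would reduce the logical statement to a purely sequent-level claim: show by induction on $h$ that whenever $\LDL \vdash_h \Gamma \Rightarrow \nabla^k(E \to F)$ with $\Gamma = \{\nabla^{m_i}(A_i \supset B_i)\}_{i \in I} \cup \{\nabla^{n_j}(C_j \to D_j)\}_{j \in J}$, at least one of the three stated conclusions holds in its sequent formulation.

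The induction step proceeds by case analysis on the last rule. Inspection of the shape of $\Gamma \Rightarrow \nabla^k(E \to F)$ rules out all axioms, all right-introductions except $(R\!\to)$, and all left-introductions except $(L\!\supset^{m_i})$, $(L\!\to^{n_j-1})$, and $(LW)$; moreover $(R\!\to)$ is possible only when $k = 0$ and $(N)$ only when $k \geq 1$. The cases $(L\!\supset^{m_i})$ and $(L\!\to^{n_j-1})$ yield conclusions (1) and (2) immediately, since their first premises are $\Gamma \Rightarrow \nabla^{m_i} A_i$ and $\Gamma \Rightarrow \nabla^{n_j-1} C_j$ respectively. The $(LW)$ case follows by routine transfer of the induction hypothesis through another $(LW)$, and the $(Rw)$ case follows from the premise $\Gamma \Rightarrow \,$ by weakening to the appropriate right-hand side, with Lemma \ref{lem:arrow-bot}(ii) ruling out the degenerate sub-case in which $I = \varnothing$ and every $n_j = 0$.

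The real work is concentrated in the two remaining cases. For $(R\!\to)$ with $k = 0$, the premise $\nabla\Gamma, E \Rightarrow F$ is \emph{already} conclusion (3) verbatim with $I' = I$, $J' = J$, because $m_i - k + 1 = m_i + 1$ matches the extra $\nabla$ introduced by $(R\!\to)$ on each antecedent formula, and the constraint $m_i, n_j \geq 0$ is trivial. For $(N)$ with $k \geq 1$, the shape $\Gamma = \nabla\Gamma'$ forces $m_i, n_j \geq 1$ for every index, and the premise $\Gamma' \Rightarrow \nabla^{k-1}(E \to F)$ fits the induction hypothesis with shifted parameters $m_i' = m_i - 1$, $n_j' = n_j - 1$, $k' = k - 1$. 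I would then apply $(N)$ to an IH outcome of type (1) or (2) to recover the corresponding original-level conclusion, whereas an IH outcome of type (3) transfers without modification thanks to the identity $m_i' - k' + 1 = m_i - k + 1$.

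The main obstacle I anticipate is precisely the index bookkeeping in the $(N)$ case: I must verify that the simultaneous decrement $m_i \mapsto m_i - 1$, $n_j \mapsto n_j - 1$, $k \mapsto k - 1$ preserves the exact form of each of the three listed conclusions, and in particular that the exponent $m_i - k + 1$ appearing in conclusion (3) is invariant under this shift. The invariance reduces to the small algebraic identity $m_i - k + 1 = (m_i - 1) - (k - 1) + 1$ (and its analogue for $n_j$), after which the $(N)$ case unfolds cleanly. Everything else is a routine imitation of the pattern already set by Theorem \ref{thm:visser}.
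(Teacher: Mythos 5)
Your proposal is correct and follows essentially the same route as the paper's proof: the same reduction to a height-indexed sequent-level claim, the same case analysis on the last rule, and the same key observation that the exponent $m_i-k+1$ in the third conclusion is invariant under the simultaneous shift $m_i\mapsto m_i-1$, $k\mapsto k-1$ in the $(N)$ case. Your explicit use of Lemma \ref{lem:arrow-bot}\emph{(ii)} to dispose of the degenerate $(Rw)$ sub-case is a detail the paper leaves implicit, but it is the right justification.
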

\begin{proof}
Similar to the proof of Theorem \ref{thm:visser}, we use induction on the height $h$ of a proof-tree $\mathcal{D}$ for any sequent of the form 
\[
\{\nabla^{m_i}(A_i \supset B_i) \}_{i \in I}, \{\nabla^{n_j} (C_j \to D_j) \}_{j \in J} \Rightarrow \nabla^k(E \to F).
\]
First, note that $\mathcal{D}$ cannot be an axiom. Hence, $h \neq 0$, which implies that the base case of the induction trivially holds. 
For the induction step, based on the form of the formulas in the end sequent, the last rule in $\mathcal{D}$ must be either $(LW)$, $(Rw)$, $(N)$, $(L\!\supset^{m_i})$, for some $i \in I$, $(L\!\to^{n_j-1})$, for some $j \in J$ such that $n_j \geq 1$, or $(R\!\to)$ if $k = 0$. The cases $(LW)$, $(Rw)$, $(L\!\supset^{m_i})$, and $(L\!\to^{n_j-1})$ are similar to the corresponding ones in the proof of Theorem \ref{thm:visser}. 

For $(N)$, if the last rule in $\mathcal{D}$ is $(N)$, then $m_i, n_j, k \geq 1$, for any $i \in I$ and $j \in J$. There is a proof-tree $\mathcal{D}'$ of height $h' < h$ for
\[
\{\nabla^{m_i-1}(A_i \supset B_i)\}_{i\in I}, \{\nabla^{n_j-1} (C_j \rightarrow D_j)\}_{j \in J} \Rightarrow \nabla^{k-1}(E \to F)
\]
in $\LDL$. Then, by the induction hypothesis, one of the following holds:
    \begin{itemize}
      \item[$\bullet$] $\LDL \vdash  \{\nabla^{m_i-1}(A_i \supset B_i)\}_{i\in I}, \{\nabla^{n_j-1} (C_j \rightarrow D_j)\}_{j \in J} \Rightarrow \nabla^{m_i-1} A_i$, for some $i \in I$,
      \item[$\bullet$] $\LDL \vdash \{\nabla^{m_i-1}(A_i \supset B_i)\}_{i\in I}, \{\nabla^{n_j-1} (C_j \rightarrow D_j)\}_{j \in J} \Rightarrow \nabla^{n_j-2} C_j$, for some $j \in J$, where $n_j-1 \geq 1$,
      \item[$\bullet$] $\LDL \vdash\{\nabla^{m_i-k+1}(A_i \supset B_i)\}_{i\in I'}, \{\nabla^{n_j-k+1} (C_j \rightarrow D_j)\}_{j \in J'}, E \Rightarrow F$, for some $I' \subseteq I$ and $J' \subseteq J$ such that $m_i-1, n_j-1 \geq k-1$, for any $i \in I'$ and $j \in J'$.
      \end{itemize}
In the first and second cases, it is enough to apply $(N)$. In the last case, there is nothing to prove. Finally, if $k = 0$ and the last rule is $(R\!\to)$, the claim is already proved.
\end{proof}

\begin{theorem}\label{thm:arrow-visserII}(Implicative Visser's rules for $\ldls$)
 Let $I$ be a finite set (possibly empty), and $\{m_i\}_{i \in I}$ be a sequence of natural numbers. If 
  \[
  \ldls \vdash [\bigwedge_{i \in I} \nabla^{m_i} (A_i \rightarrow B_i)] \rightarrow \nabla^k(C \to D),
  \]
  then one of the following holds:
    \begin{itemize}
      \item[$\bullet$] $\ldls \vdash  [\bigwedge_{i \in I} \nabla^{m_i}(A_i \to B_i)] \rightarrow \nabla^{m_i-1} A_i$, for some $i \in I$, where $m_i \geq 1$,
      \item[$\bullet$] $\ldls \vdash [\bigwedge_{i \in I'} \nabla^{m_i-k+1}(A_i \to B_i) \wedge C] \rightarrow D$, for some $I' \subseteq I$ such that $m_i \geq k$, for any $i \in I'$.
      \end{itemize}
\end{theorem}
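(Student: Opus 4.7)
The plan is to derive this from the already-proved implicative Visser's rules for $\ldl$ (Theorem \ref{thm:arrow-visser}) via the conservativity of $\ldl$ over $\ldls$ recorded in Corollary \ref{LogicAndCalculi}. Because the language $\L_*$ has no Heyting implication, the $\ldls$-version is morally just the $\ldl$-version with the ``$\supset$-part'' of the antecedent taken empty.

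Concretely, assume the hypothesis $\ldls \vdash X \to \nabla^k(C \to D)$, where $X = \bigwedge_{i \in I} \nabla^{m_i}(A_i \to B_i)$. Since the formula $X \to \nabla^k(C \to D)$ lies in $\L_*$, Corollary \ref{LogicAndCalculi} gives $\ldl \vdash X \to \nabla^k(C \to D)$. Now I would invoke Theorem \ref{thm:arrow-visser}, instantiating its ``Heyting'' index set to $\varnothing$ and renaming its ``$\to$'' data $(J,\{n_j\}_{j\in J},\{(C_j,D_j)\}_{j\in J})$ to the present $(I,\{m_i\}_{i\in I},\{(A_i,B_i)\}_{i\in I})$. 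The first bullet of Theorem \ref{thm:arrow-visser} then becomes vacuous, while the second and third bullets specialise, respectively, to
\[
\ldl \vdash X \to \nabla^{m_i - 1} A_i \quad (\text{some } i \in I,\ m_i \geq 1)
\]
and
\[
\ldl \vdash \bigl[\bigwedge_{i \in I'} \nabla^{m_i - k + 1}(A_i \to B_i) \wedge C\bigr] \to D \quad (\text{some } I' \subseteq I,\ m_i \geq k \text{ for } i \in I').
\]

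To conclude, I would observe that each of these two conclusion formulas is itself built from $\nabla$, $\wedge$, $\to$ and the formulas $A_i, B_i, C, D \in \L_*$, hence lies in $\L_*$. A second application of Corollary \ref{LogicAndCalculi} therefore transfers provability from $\ldl$ back to $\ldls$, yielding the claimed dichotomy. There is no substantive obstacle here; the only care needed is bookkeeping, namely matching index sets, checking that the shifts $m_i - k + 1$ and the side conditions $m_i \geq k$ correspond exactly to those appearing in Theorem \ref{thm:arrow-visser}, and verifying that no $\supset$-occurrence is silently introduced along the way so that conservativity is genuinely applicable in both directions.
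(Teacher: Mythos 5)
Your proposal is correct and follows exactly the paper's own route: the paper likewise derives the $\ldls$ statement by specializing Theorem \ref{thm:arrow-visser} to an empty Heyting index set and transferring the hypothesis and the resulting $\L_*$-conclusions back and forth via the conservativity of $\ldl$ over $\ldls$ from Corollary \ref{LogicAndCalculi}. Your bookkeeping of the index renaming, the shift $m_i-k+1$, and the side conditions $m_i \geq k$ matches the paper's statement precisely.
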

\begin{proof}
The proof follows directly from Theorem \ref{thm:arrow-visser} and the conservativity of $\ldl$ over $\ldls$, as established in Corollary \ref{LogicAndCalculi}.
\end{proof}

\begin{theorem}\label{thm:arrow-visserIII}(Heyting Implicative Visser's rules for $\ldl$)
Let $I$ and $J$ be finite sets (possibly empty) and $\{m_i\}_{i \in I}$ and $\{n_j\}_{j \in J}$ be sequences of natural numbers. If 
  \[
  \ldl \vdash [\bigwedge_{i \in I} \nabla^{m_i}(A_i \supset B_i) \wedge \bigwedge_{j \in J} \nabla^{n_j} (C_j \rightarrow D_j)] \rightarrow \nabla^k(E \supset F),
  \]
  then one of the following holds:
    \begin{itemize}
      \item[$\bullet$] $\ldl \vdash  [\bigwedge_{i \in I} \nabla^{m_i}(A_i \supset B_i) \wedge \bigwedge_{j \in J} \nabla^{n_j} (C_j \rightarrow D_j)] \rightarrow \nabla^{m_i} A_i$, for some $i \in I$,
      \item[$\bullet$] $\ldl \vdash [\bigwedge_{i \in I} \nabla^{m_i}(A_i \supset B_i) \wedge \bigwedge_{j \in J} \nabla^{n_j} (C_j \rightarrow D_j)] \rightarrow \nabla^{n_j-1} C_j$, for some $j \in J$, where $n_j \geq 1$,
      \item[$\bullet$] $\ldl \vdash [\bigwedge_{i \in I'} \nabla^{m_i-k}(A_i \supset B_i) \wedge \bigwedge_{j \in J'} \nabla^{n_j-k} (C_j \rightarrow D_j) \wedge E] \rightarrow F$, for some $I' \subseteq I$ and $J' \subseteq J$ such that $m_i, n_j \geq k$, for any $i \in I'$ and $j \in J'$.
      \end{itemize}
\end{theorem}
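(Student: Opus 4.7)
The plan is to follow the template established by Theorem \ref{thm:arrow-visser} almost verbatim, with the crucial difference that the outer connective on the right is the Heyting implication $\supset$ rather than the abstract implication $\to$. The rule $(R\!\supset)$ does not insert an additional $\nabla$ in front of the context of its premise, which is precisely why the shift in the third alternative is $m_i - k$ and $n_j - k$, instead of $m_i - k + 1$ and $n_j - k + 1$ as in Theorem \ref{thm:arrow-visser}.

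By Corollary \ref{LogicAndCalculi}, Lemma \ref{lem:arrow-bot}$(i)$, and cut admissibility, it suffices to establish: for every $h \in \mathbb{N}$, whenever
\[
\LDL \vdash_h \Gamma \Rightarrow \nabla^k(E \supset F)
\]
with $\Gamma = \{\nabla^{m_i}(A_i \supset B_i)\}_{i \in I} \cup \{\nabla^{n_j}(C_j \to D_j)\}_{j \in J}$, one of the three alternatives (stated as the provability of the corresponding sequents in $\LDL$) holds. I would proceed by induction on $h$. The base $h = 0$ is vacuous, since the sequent does not match any axiom. For the inductive step, inspecting the rules of $\LDL$ shows that the last rule must be one of $(LW)$, $(Rw)$, $(N)$, $(L\!\supset^{m_i})$ for some $i \in I$, $(L\!\to^{n_j-1})$ for some $j \in J$ with $n_j \geq 1$, or $(R\!\supset)$ (which forces $k = 0$).

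The cases $(LW)$, $(L\!\supset^{m_i})$, and $(L\!\to^{n_j-1})$ are handled exactly as in Theorems \ref{thm:visser} and \ref{thm:arrow-visser}, by invoking the induction hypothesis on the premise(s) of the rule. For $(Rw)$, a derivation of $\Gamma \Rightarrow$ is available; from it one obtains $\Gamma \Rightarrow \nabla^{m_i} A_i$ for any $i \in I$ by a further $(Rw)$, yielding the first alternative, and analogously the second alternative when $I$ is empty but some $n_j \geq 1$. The residual subcase where $I = \emptyset$ and every $n_j = 0$ requires $\{C_j \to D_j\}_{j \in J} \Rightarrow$ to be derivable, which is ruled out by Lemma \ref{lem:arrow-bot}$(ii)$.

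The genuinely new case is $(R\!\supset)$: since this rule produces no $\nabla$ on either side, it can occur as the last rule only when $k = 0$, and its premise is $\Gamma, E \Rightarrow F$. This matches the third alternative verbatim with $I' = I$ and $J' = J$, because the condition $m_i, n_j \geq 0$ is vacuous and $\nabla^{m_i - 0} = \nabla^{m_i}$. Finally, in the $(N)$ case, $m_i$, $n_j$, and $k$ are all at least $1$; applying the induction hypothesis to the shorter premise
\[
\{\nabla^{m_i - 1}(A_i \supset B_i)\}_{i \in I},\; \{\nabla^{n_j - 1}(C_j \to D_j)\}_{j \in J} \Rightarrow \nabla^{k-1}(E \supset F)
\]
yields one of three sub-conclusions: the first two are lifted to the desired form by a single application of $(N)$, whereas the third already has the correct shape on the nose, via the arithmetic identity $(m_i - 1) - (k - 1) = m_i - k$ (and similarly for $n_j$). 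The main obstacle I anticipate is precisely this bookkeeping of the $\nabla$-exponents in the $(N)$ case together with confirming that the elaborate third alternative meshes correctly with $(R\!\supset)$, which, unlike $(R\!\to)$, inserts no $\nabla$; once these points are pinned down, the rest is routine propagation of the induction hypothesis.
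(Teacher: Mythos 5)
Your proposal is correct and follows exactly the route the paper intends: its own proof of this theorem is just the remark that it is ``similar to that of Theorem~\ref{thm:arrow-visser}'', and you have elaborated that same height induction with the right case analysis, correctly locating the only genuine difference in the $(R\!\supset)$ case (no extra $\nabla$ on the context, hence the shift $m_i-k$ rather than $m_i-k+1$) and getting the exponent bookkeeping in the $(N)$ case right. Your treatment of the $(Rw)$ case via the first/second alternatives and Lemma~\ref{lem:arrow-bot}$(ii)$ is in fact slightly more careful than the paper's gloss, since for $k>0$ the third alternative no longer follows directly from $\Gamma \Rightarrow$.
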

\begin{proof}
The proof is similar to that of Theorem \ref{thm:arrow-visser}.
\end{proof}

\section{Interpolation Theorems}\label{sec: Interpolation}

In this section, we establish the \emph{Craig interpolation theorem} for $\ldl$ using a method similar to that of Maehara \cite{maehara1960interpolation}. We then apply the deduction theorem for $\LDL$ (Theorem \ref{thm:deduction}) to show that the calculus $\LDLP$, which extends $\LDL$ with the $(cut)$ rule, satisfies deductive interpolation. Our approach relies heavily on the Heyting implication, even in the language $\mathcal{L}_*$. Consequently, this method does not extend to proving Craig interpolation for $\ldls$ or deductive interpolation for $\LDLSP$.

\begin{definition}
A logic $L$ has the \emph{Craig Interpolation Property} if for any formulas $A$ and $B$ if $ (A \to B) \in L$, then there exists a formula $C$ such that $(A \to C) \in L$, $(C \to B) \in L$ and $V(C) \subseteq V(A) \cap V(B)$.
\end{definition}

To demonstrate that $\ldl$ satisfies the Craig interpolation property, we establish the following theorem.

\begin{theorem}\label{thm:craig} For any multisets $\Gamma_1$, $\Gamma_2$ and $\Delta$ of formulas, if $\LDL \vdash \Gamma_1 , \Gamma_2 \Rightarrow \Delta$, then there is a formula $C \in \L$ such that:
    \begin{description}
        \item[$(i)$] 
$\LDL \vdash \Gamma_1 \Rightarrow C$, 
        \item[$(ii)$] 
$\LDL \vdash \Gamma_2 , C \Rightarrow \Delta$, and
 \item[$(iii)$] 
$V(C) \subseteq V(\Gamma_1) \cap V(\Gamma_2 \cup \Delta)$.
   \end{description}
Consequently, the logic $\ldl$ enjoys Craig interpolation property.
\end{theorem}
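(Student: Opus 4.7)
The plan is to adapt Maehara's method to the $\nabla$-enriched setting by strengthening the conclusion to range over all antecedent splits simultaneously. Specifically, I will prove by induction on the height of a cut-free proof-tree $\mathcal{D}$ of $\Gamma_1, \Gamma_2 \Rightarrow \Delta$ in $\LDL$ that, for every such $\mathcal{D}$ and every partition of its antecedent into $\Gamma_1 \cup \Gamma_2$, one can construct an interpolant $C$ satisfying $(i)$--$(iii)$. Throughout, I freely invoke the admissibility of $(Id)$, $(Lc)$, and $(cut)$ granted by Lemma \ref{lem:id-adm} and Theorems \ref{thm:lc-adm}, \ref{thm:cut-adm}. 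The outer case split is on the last rule of $\mathcal{D}$, with inner sub-cases for which block contains each active formula.

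Given this strengthened statement, the Craig interpolation property of $\ldl$ is immediate. If $\ldl \vdash A \to B$, then Corollary \ref{LogicAndCalculi} and Lemma \ref{lem:arrow-bot}$(i)$ yield $\LDL \vdash A \Rightarrow B$; applying the theorem to the partition $\Gamma_1 = \{A\}$, $\Gamma_2 = \varnothing$, $\Delta = \{B\}$ produces an interpolant $C$ with $V(C) \subseteq V(A) \cap V(B)$, and a second appeal to Lemma \ref{lem:arrow-bot}$(i)$ supplies $\ldl \vdash A \to C$ and $\ldl \vdash C \to B$.

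For the axioms and the purely propositional logical rules, the construction reproduces the standard Maehara procedure for $\mathbf{G3i}$: for $(Id^p)$ with $p \in \Gamma_1$ take $C=p$, otherwise $C = \top$; for $(L \bot)$ with $\bot \in \Gamma_2$ take $C = \top$; $(R \wedge)$ uses $\wedge$ of the premises' interpolants; $(L \vee^{0})$ uses $\vee$; $(R\!\supset)$ and $(L\!\supset^0)$ use $\supset$ combinations; the structural rules $(LW)$ and $(Rw)$ are transparent. The genuinely new cases are those that rearrange $\nabla$'s. For $(N)$, the interpolant $C'$ obtained for the premise under the lifted partition is transformed into $\nabla C'$, with both halves of the correctness following from one application of $(N)$ each. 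For $(R\!\to)$ with premise $\nabla \Gamma_1, \nabla \Gamma_2, A \Rightarrow B$, one sub-cases on which side the displaced formula $A$ is assigned to: if $A$ is attached to the $\Gamma_1$-side, the interpolant is $A \to C'$ (verified via $(R\!\to)$ and $(L\!\to^0)$ with an identity axiom); if $A$ is attached to the $\Gamma_2$-side, it becomes $\Box C'=\top \to C'$, whose correctness on both sides is a direct consequence of Lemma \ref{lem:id-adm} parts $(iii)$--$(v)$, which internalize the adjunction $\nabla \dashv \Box$.

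The main obstacle lies in the left rules $(L\!\to^n)$ and $(L\!\supset^n)$ when the principal formula sits in the $\Gamma_1$-block. The two premises supply partial interpolants $C_1$ (from the premise producing $\nabla^n A$) and $C_2$ (from the premise producing $\Delta$), but neither alone serves as the interpolant for the conclusion: using $C_2$ fails because $\nabla^n A$ is not derivable from the $\Gamma_1$-part on its own, while any naive combination involving $C_1$ may carry atoms of $A$ that do not appear in $V(\Gamma_2 \cup \Delta)$. The resolution, following the standard Maehara treatment of $L\!\supset$ for intuitionistic logic, is to stitch $C_1$ and $C_2$ together by Heyting implication (possibly further wrapped with $\Box$'s or $\nabla$'s to accommodate the $\nabla^n$ displacement), of roughly the form $C_1 \supset C_2$; the $\Gamma_1$-side is verified by $(R\!\supset)$ together with a re-application of the left rule in question, and the $\Gamma_2$-side by $(L\!\supset^0)$ combined with weakening and, where necessary, a small cut. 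This forced appearance of $\supset$ in the interpolant is exactly what blocks extending the argument to $\LDLS$, explaining why the theorem is stated only for $\ldl$.
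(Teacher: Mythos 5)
Your strategy coincides with the paper's: Maehara's method by induction on the height of a cut-free $\LDL$-proof, with $\Box D = \top \to D$ as the interpolant for $(R\!\to)$ and a Heyting implication for $(L\!\to^n)$ when the principal formula lies in $\Gamma_1$ (which is indeed exactly what confines the theorem to $\ldl$). However, one branch of your $(R\!\to)$ case would fail. There is no sub-casing on ``which side $A$ is assigned to'': the premise is $\nabla\Gamma_1, \nabla\Gamma_2, A \Rightarrow B$, and you choose the partition yourself before invoking the induction hypothesis, so you must choose one that preserves the variable condition. If you attach $A$ to the $\Gamma_1$-block and take $A \to C'$ as interpolant, condition $(iii)$ breaks: the theorem demands $V(C) \subseteq V(\Gamma_1)$, but $A$ is a subformula of the succedent $A \to B$, so its atoms are only guaranteed to lie in $V(\Gamma_2 \cup \Delta)$ and may be disjoint from $V(\Gamma_1)$. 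Only the assignment of $A$ to the $\Gamma_2$-block, giving $\Box C'$, is sound; that single branch is the paper's construction and is all you need.

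A second, smaller point: in $(L\!\to^n)$ with principal formula $\nabla^{n+1}(A \to B) \in \Gamma_1$, the construction you sketch only works if the induction hypothesis is applied to the left premise with the two blocks \emph{swapped}, i.e.\ you take $D$ satisfying $\Gamma_2 \Rightarrow D$ and $\Gamma_1', \nabla^{n+1}(A\to B), D \Rightarrow \nabla^n A$; then $C = D \supset E$ works verbatim, verified by $(R\!\supset)$ plus a re-application of $(L\!\to^n)$ on the $\Gamma_1$ side and by $(L\!\supset^0)$ plus weakening on the $\Gamma_2$ side --- no extra $\nabla$ or $\Box$ wrapping and no cut are needed. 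With the unswapped partition, the left premise would yield $C_1$ with $\Gamma_1', \nabla^{n+1}(A\to B) \Rightarrow C_1$, and $C_1 \supset C_2$ is then not derivable from the $\Gamma_1$-block. Your text gestures at ``the standard Maehara treatment'' but leaves this swap implicit; it is the crux of the case and should be stated.
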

\begin{proof}

Let $\D$ be a proof-tree for $\Gamma_1 , \Gamma_2 \Rightarrow \Delta$ in $\LDL$. By induction on the height of $\D$, we construct a formula $C$ satisfying the properties $(i)$, $(ii)$, and $(iii)$. For the base case, suppose $\D$ is an axiom. If the axiom is $(Id^p)$, there are two cases to consider: either $ \Gamma_1 = \{p\}$ and $\Gamma_2 = \varnothing$, or $\Gamma_1 = \varnothing$ and $\Gamma_2 = \{p\}$. In the first case, set $C = p$, and there is nothing to prove. In the second case, set $C = \top$. Then, $\Gamma_1 \Rightarrow \top$ by $(R \top)$, and $p , \top \Rightarrow p$ by $(Id^p)$ and $(LW)$. Note that $V(\top) = \varnothing$, and hence the condition $(iii)$ is automatic.
If the axiom is $(L \bot)$, again there are two cases: either $\Gamma_1 = \varnothing$ and $\Gamma_2 = \{\bot\}$, or $\Gamma_1 = \{\bot\}$ and $\Gamma_2 = \varnothing$. If $\Gamma_1 = \varnothing$, then take $C = \top$. The conditions $(i)$ and $(iii)$ are trivial, and as $\LDL \vdash \bot, \top \Rightarrow$, we also have $(ii)$. If $\Gamma_1 = {\bot}$, then take $C = \bot$. This time, $(ii)$ and $(iii)$ are trivial, and as $\LDL \vdash \bot \Rightarrow \bot$, we also have $(i)$.
If the axiom is $(R \top)$, then take $C = \top$. Then, $(i)$ and $(iii)$ are trivial. For $(ii)$, we have $\LDL \vdash \top \Rightarrow \top$.

For the inductive step where $\D$ is not an axiom, there are many possibilities for the last rule in $\mathcal{D}$. We explain the cases for rules $(LW)$, $(R \wedge)$,  $(L \vee^n)$, $(L\!\to^n)$, and $(R\!\to)$. The rest are similar:\\

      \item $\bullet$ ($LW$): If the last rule in $\D$ is $(LW)$, then $\Gamma_1 = \Gamma_1', \Sigma_1$ and $\Gamma_2 = \Gamma_2', \Sigma_2$, for some multisets $\Gamma_1'$, $\Gamma_2'$, $\Sigma_1$ and $\Sigma_2$, and the last rule is in the form:
      \begin{prooftree}
      \AXC{$\Gamma_1', \Gamma_2' \Rightarrow \Delta$}
      \RightLabel{$LW$}
      \UIC{$\Gamma_1', \Gamma_2', \Sigma_1, \Sigma_2 \Rightarrow \Delta$}
    \end{prooftree}
   By the induction hypothesis, we have a formula $D$ such that $\Gamma_1' \Rightarrow D$ and $\Gamma_2', D \Rightarrow \Delta$ are provable in $\LDL$, and $V(D) \subseteq V(\Gamma_1') \cap V(\Gamma_2' \cup \Delta)$. Now, set $C = D$. Using $(LW)$ again, it is easy to see that $C$ satisfies all three conditions.\\

    \item[] $\bullet$ ($R\wedge$): Suppose that the last rule of $\D$ is $(R\wedge)$. Then, $\Delta = \{A \wedge B\}$, for some formulas $A$ and $B$, and the last rule in $\D$ is of the following form:
      \begin{prooftree}
      \AXC{$\Gamma_1, \Gamma_2 \Rightarrow A$}
      \AXC{$\Gamma_1, \Gamma_2 \Rightarrow B$}
      \RightLabel{$R\wedge$}
      \BIC{$\Gamma_1, \Gamma_2 \Rightarrow A \wedge B$}
    \end{prooftree}
    By the induction hypothesis, there is a formula $D$ such that $\Gamma_1 \Rightarrow D$ and $\Gamma_2, D \Rightarrow A$ are provable in $\LDL$ and $V(D) \subseteq V(\Gamma_1) \cap V(\Gamma_2 \cup \{A\})$. Similarly, there is a formula $E$ such that $\Gamma_1 \Rightarrow E$ and $\Gamma_2, E \Rightarrow B$ are provable in $\LDL$ and $V(E) \subseteq V(\Gamma_1) \cap V(\Gamma_2 \cup \{B\})$. 
    Define $C = D \wedge E$. For $(i)$, we clearly have $\Gamma_1 \Rightarrow D \wedge E$. For $(ii)$, consider the following proof-tree:
       \begin{prooftree}
      \AXC{$\Gamma_2, D \Rightarrow A$}
      \RightLabel{$LW$}
      \UIC{$\Gamma_2, D, E \Rightarrow A$}
      \RightLabel{$L\wedge^0$}
      \UIC{$\Gamma_2, D \wedge E \Rightarrow A$}
       \AXC{$\Gamma_2, E \Rightarrow B$}
      \RightLabel{$LW$}
      \UIC{$\Gamma_2, D, E \Rightarrow B$}
      \RightLabel{$L\wedge^0$}
      \UIC{$\Gamma_2, D \wedge E \Rightarrow B$}
      \RightLabel{$R\wedge$}
      \BIC{$\Gamma_2, D \wedge E \Rightarrow A \wedge B$}
    \end{prooftree}
Checking the condition $(iii)$ is straightforward.\\
    
      \item[$\bullet$] ($L \vee^n$):
      Suppose the last rule of $\D$ is $(L \vee^n)$. Then,  the principal formula of the last rule is $\nabla^n(A \vee B)$, for some formulas $A$ and $B$. Now,
      there are two cases to consider: either $\nabla^n (A \vee B) \in \Gamma_1$ or $\nabla^n (A \vee B) \in \Gamma_2$. In the first case, $\Gamma_1 = \Gamma_1', \nabla^n(A \vee B)$, for some multiset $\Gamma_1'$ and the last rule is in the form:
      \begin{prooftree}
        \AXC{$\Gamma_1', \Gamma_2, \nabla^n A \Rightarrow \Delta$}
         \AXC{$\Gamma_1', \Gamma_2, \nabla^n B \Rightarrow \Delta$}
        \RightLabel{$L \vee^n$}
        \BIC{$\Gamma_1', \Gamma_2 , \nabla^n (A \vee B) \Rightarrow \Delta$}
      \end{prooftree}
      By the induction hypothesis, there is a formula $D$ such that $\Gamma_1', \nabla^n A \Rightarrow D$ and $\Gamma_2, D  \Rightarrow \Delta$ are provable in $\LDL$ and $V(D) \subseteq V(\Gamma_1' \cup \{\nabla^n A\}) \cap V(\Gamma_2 \cup \Delta)$.
      Similarly, there is a formula $E$ such that $\Gamma_1', \nabla^n B \Rightarrow E$ and $\Gamma_2, E  \Rightarrow \Delta$ are provable in $\LDL$ and $V(E) \subseteq V(\Gamma_1' \cup \{\nabla^n B\}) \cap V(\Gamma_2 \cup \Delta)$.
      Take $C = D \vee E$. For $(i)$, consider the following proof-tree:
       \begin{prooftree}
        \AXC{$\Gamma_1', \nabla^n A \Rightarrow D$}
         \RightLabel{$R\vee_1$}
        \UIC{$\Gamma_1', \nabla^n A \Rightarrow D \vee E$}
         \AXC{$\Gamma_1', \nabla^n B \Rightarrow E$}
          \RightLabel{$R\vee_2$}
        \UIC{$\Gamma_1', \nabla^n B \Rightarrow D \vee E$}
        \RightLabel{$L \vee^n$}
        \BIC{$\Gamma_1', \nabla^n (A \vee B) \Rightarrow D \vee E$}
      \end{prooftree}
      For $(ii)$, it is enough to apply the rule $(L \vee^0)$ on the sequent $\Gamma_2, D \Rightarrow \Delta$ and $\Gamma_2, E \Rightarrow \Delta$ to get $\Gamma_2, D \vee E \Rightarrow \Delta$. Checking $(iii)$ is straightforward.  
  
      In the second case, $\Gamma_2 = \Gamma_2', \nabla^n(A \vee B)$, for some multiset $\Gamma_2'$ and the last rule is in the form:
       \begin{prooftree}
        \AXC{$\Gamma_1, \Gamma'_2, \nabla^n A \Rightarrow \Delta$}
         \AXC{$\Gamma_1, \Gamma'_2, \nabla^n B \Rightarrow \Delta$}
        \RightLabel{$L \vee^n$}
        \BIC{$\Gamma_1, \Gamma'_2 , \nabla^n (A \vee B) \Rightarrow \Delta$}
      \end{prooftree}
      By the induction hypothesis, there is a formula $D$ such that $\Gamma_1 \Rightarrow D$ and $\Gamma'_2, \nabla^n A, D \Rightarrow \Delta$ are provable in $\LDL$ and $V(D) \subseteq V(\Gamma_1) \cap V(\Gamma'_2 \cup \{\nabla^n A\} \cup \Delta)$.
      Similarly,  there is a formula $E$ such that $\Gamma_1 \Rightarrow E$ and $\Gamma'_2, \nabla^n B, E \Rightarrow \Delta$ are provable in $\LDL$ and $V(E) \subseteq V(\Gamma_1) \cap V(\Gamma'_2 \cup \{\nabla^n B\} \cup \Delta)$.
      Take $C = D \wedge E$. Then, $(i)$ and $(iii)$ are clear. For $(ii)$, consider the following proof-tree:
        \begin{prooftree}
        \AXC{$\Gamma_2', \nabla^n A, D \Rightarrow \Delta$}
         \RightLabel{$LW$}
        \UIC{$\Gamma_2', \nabla^n A, D, E \Rightarrow \Delta$}
          \RightLabel{$L\wedge^0$}
        \UIC{$\Gamma_2', \nabla^n A, D \wedge E \Rightarrow \Delta$}
         \AXC{$\Gamma_2', \nabla^n B, E \Rightarrow \Delta$}
         \RightLabel{$LW$}
        \UIC{$\Gamma_2', \nabla^n B, D, E \Rightarrow \Delta$}
          \RightLabel{$L\wedge^0$}
        \UIC{$\Gamma_2', \nabla^n B, D \wedge E \Rightarrow \Delta$}
        \RightLabel{$L \vee^n$}
        \BIC{$\Gamma_2', \nabla^n (A \vee B), D \wedge E \Rightarrow \Delta$}
      \end{prooftree}

      \item[] $\bullet$ ($L\!\rightarrow^n$): Suppose that the last rule of $\D$ is $(L\! \rightarrow^n)$. Then, the principal formula of the last rule is $\nabla^{n+1}(A \to B)$, for some formulas $A$ and $B$. Now,
      there are two cases to consider: either $\nabla^{n+1}(A \to B) \in \Gamma_1$ or $\nabla^{n+1}(A \to B) \in \Gamma_2$.
      For the first case, $\Gamma_1 = \Gamma_1', \nabla^{n+1} (A \rightarrow B)$, for some multiset $\Gamma_1'$ and the last rule in $\D$ must be of the form:
      \begin{prooftree}
        \AXC{$\Gamma_1', \Gamma_2, \nabla^{n+1} (A \rightarrow B) \Rightarrow \nabla^n A$}
        \AXC{$\Gamma_1', \Gamma_2, \nabla^{n+1} (A \rightarrow B), \nabla^n B \Rightarrow \Delta$}
        \RightLabel{\footnotesize $L\!\rightarrow^n$}
        \BIC{$\Gamma_1', \Gamma_2 , \nabla^{n+1} (A \rightarrow B) \Rightarrow \Delta$}
      \end{prooftree}
      By the induction hypothesis, there is a formula $D$ such that $\Gamma_2 \Rightarrow D$ and $\Gamma'_1, \nabla^{n+1} (A \rightarrow B), D \Rightarrow \nabla^n A$ are provable in $\LDL$ and 
      \[
      V(D) \subseteq V(\Gamma'_1 \cup \{\nabla^{n+1}(A \to B), \nabla^n A\}) \cap V(\Gamma_2).
      \]
      Again, by the induction hypothesis, there is a formula $E$ such that the sequents  $\Gamma'_1, \nabla^{n+1}(A \to B), 
      \nabla^n B \Rightarrow E$ and $\Gamma_2, E \Rightarrow \Delta$ are provable in $\LDL$ and 
      \[
      V(E) \subseteq V(\Gamma'_1 \cup \{\nabla^{n+1}(A \to B), \nabla^n B\}) \cap V(\Gamma_2 \cup \Delta).
      \]
      Define $C = D \supset E$. For $(i)$, consider the following proof-tree:
      \begin{prooftree}
        \AXC{$\Gamma_1', \nabla^{n+1} (A \rightarrow B), D \Rightarrow \nabla^n A$}
        \AXC{$\Gamma_1', \nabla^{n+1} (A \rightarrow B), \nabla^n B \Rightarrow E$}
        \RightLabel{\footnotesize $LW$}
        \UIC{$\Gamma_1', \nabla^{n+1} (A \rightarrow B), D, \nabla^n B \Rightarrow E$}
        \RightLabel{\footnotesize $L\!\rightarrow^n$}
        \BIC{$\Gamma_1', \nabla^{n+1} (A \rightarrow B), D \Rightarrow E$}
         \RightLabel{\footnotesize $R\!\supset$}
        \UIC{$\Gamma_1', \nabla^{n+1} (A \rightarrow B) \Rightarrow D \supset E$}
      \end{prooftree}
     For $(ii)$, consider the following proof-tree:
      \begin{prooftree}
        \AXC{$\Gamma_2 \Rightarrow D$}
        \RightLabel{\footnotesize $LW$}
        \UIC{$\Gamma_2, D \supset E \Rightarrow D$}
        \AXC{$\Gamma_2, E  \Rightarrow \Delta$}
        \RightLabel{\footnotesize $L\!\supset^0$}
        \BIC{$\Gamma_2, D \supset E  \Rightarrow \Delta$}
      \end{prooftree}
     The condition $(iii)$ is easy to check.

     For the second case, $\Gamma_2 = \Gamma_2', \nabla^{n+1} (A \rightarrow B)$, for some multiset $\Gamma_2'$ and the last rule in $\D$ must be of the form:
      \begin{prooftree}
        \AXC{$\Gamma_1, \Gamma_2', \nabla^{n+1} (A \rightarrow B) \Rightarrow \nabla^n A$}
        \AXC{$\Gamma_1, \Gamma_2', \nabla^{n+1} (A \rightarrow B), \nabla^n B \Rightarrow \Delta$}
        \RightLabel{\footnotesize $L\!\rightarrow^n$}
        \BIC{$\Gamma_1, \Gamma_2' , \nabla^{n+1} (A \rightarrow B) \Rightarrow \Delta$}
      \end{prooftree}
      By the induction hypothesis, there is a formula $D$ such that $\Gamma_1 \Rightarrow D$ and $\Gamma'_2, \nabla^{n+1} (A \rightarrow B), D \Rightarrow \nabla^n A$ are provable in $\LDL$ and
       \[
      V(D) \subseteq V(\Gamma_1) \cap V(\Gamma'_2 \cup \{\nabla^{n+1}(A \to B), \nabla^n A\}).
      \]
      Again, by the induction hypothesis, there is a formula $E$ such that $\Gamma_1 \Rightarrow E$ and $\Gamma'_2, \nabla^{n+1} (A \rightarrow B), \nabla^n B, E \Rightarrow \Delta$ are provable in $\LDL$ and
     \[
      V(E) \subseteq V(\Gamma_1) \cap V(\Gamma'_2 \cup \{\nabla^{n+1}(A \to B), \nabla^n B\} \cup \Delta).
      \]
    Define $C = D \wedge E$. For $(i)$, by applying the rule $(R \wedge)$, we have $\Gamma_1 \Rightarrow D \wedge E$. For $(ii)$, consider the following proof-tree:
      {\footnotesize
      \begin{prooftree}
        \AXC{$\Gamma'_2, \nabla^{n+1}(A \to B), D \Rightarrow \nabla^n A$}
        \RightLabel{\footnotesize $LW$}
        \UIC{$\Gamma'_2, \nabla^{n+1}(A \to B), D, E \Rightarrow \nabla^n A$}
        \RightLabel{\footnotesize $L\wedge^0$}
        \UIC{$\Gamma'_2, \nabla^{n+1}(A \to B), D \wedge E \Rightarrow \nabla^n A$}
        \AXC{$\Gamma'_2, \nabla^{n+1}(A \to B), E, \nabla^n B \Rightarrow \Delta$}
        \RightLabel{\footnotesize $LW$}
        \UIC{$\Gamma'_2, \nabla^{n+1}(A \to B), D, E, \nabla^n B \Rightarrow \Delta$}
        \RightLabel{\footnotesize $L\wedge^0$}
        \UIC{$\Gamma'_2, \nabla^{n+1}(A \to B), D \wedge E, \nabla^n B \Rightarrow \Delta$}
        \RightLabel{\footnotesize $L\!\to^n$}
        \BIC{$\Gamma'_2, \nabla^{n+1}(A \to B), D \wedge E  \Rightarrow \Delta$}
      \end{prooftree}
      }
     The condition $(iii)$ is easy to check.\\
  
      \item[] $\bullet$ ($R\!\rightarrow$): Suppose the last rule of $\D$ is $(R\!\rightarrow)$. Then, $\Delta = \{A \rightarrow B\}$, for some formulas $A$ and $B$, and the last rule in $\D$ is of the following form:
      \begin{prooftree}
      \AXC{$\nabla \Gamma_1, \nabla \Gamma_2, A \Rightarrow B$}
      \RightLabel{$R\!\rightarrow$}
      \UIC{$\Gamma_1, \Gamma_2 \Rightarrow A \rightarrow B$}
    \end{prooftree}
    By the induction hypothesis, there is a formula $D$ such that $\nabla \Gamma_1 \Rightarrow D$ and $\nabla \Gamma_2, A, D \Rightarrow B$ are provable in $\LDL$ and $V(D) \subseteq V(\nabla \Gamma_1) \cap V(\nabla \Gamma_2 \cup \{A, B\})$. Set $C = \top \rightarrow D$. For $(i)$ and $(ii)$, consider the following proof-trees:
\begin{center}
 \begin{tabular}{c c}
\AxiomC{$\nabla \Gamma_1 \Rightarrow D$}
 \RightLabel{$LW$}
 \UnaryInfC{$\nabla \Gamma_1, \top \Rightarrow D$}
 \RightLabel{$R\!\rightarrow$} 
 \UnaryInfC{$\Gamma_1 \Rightarrow \top \rightarrow D$}
 \DisplayProof
 & \;\;\;\;\;
 \AxiomC{$\nabla \Gamma_2, D, A \Rightarrow B$}
 \RightLabel{$(*)$} 
 \UnaryInfC{$\nabla \Gamma_2, \nabla(\top \rightarrow D), A \Rightarrow B$}
 \RightLabel{$R\!\to$} 
 \UnaryInfC{ $\Gamma_2, \top \rightarrow D \Rightarrow A \rightarrow B$,}
 \DisplayProof
\end{tabular}
\end{center} 
where $(*)$ means an application of Lemma \ref{lem:id-adm}, Part $(iii)$. The condition $(iii)$ is easy to check.

This completes the inductive proof of the main claim. Finally, to prove Craig interpolation for $\ldl$, if $\ldl \vdash A \to B$, then $\LDL \vdash \, \Rightarrow A \to B$, by Corollary \ref{LogicAndCalculi}. By Lemma \ref{lem:arrow-bot}, Part $(i)$, we have $\LDL \vdash A \Rightarrow B$. Therefore, by the main claim, there is a formula $C$ such that $\LDL \vdash A \Rightarrow C$, $\LDL \vdash C \Rightarrow B$ and $V(C) \subset V(A) \cap V(B)$. Using a similar argument as above, it is easy to see that $\ldl \vdash A \to C$ and $\ldl \vdash C \to B$.
  \end{proof}

\begin{remark}
Notice that the proof for Theorem \ref{thm:craig} does not work for $\LDLS$, since we are using $\supset$ in the interpolant formula to handle the case $(L\!\to^n)$.
\end{remark}

We now use the deduction theorem, Theorem \ref{thm:deduction}, and Theorem \ref{thm:craig} to prove that $\LDLP$ enjoys the deductive interpolation.

\begin{theorem}[Deductive Interpolation for $\LDL$]\label{thm:deductive}
  For any formulas $A$ and $B$, if $\Rightarrow A \vdash_{\LDLP}\ \Rightarrow B$ then there exists a formula $C$ such that:
	\begin{itemize}
          \item[$\bullet$] 
$\Rightarrow A \vdash_{\LDLP} \, \Rightarrow C$
        \item[$\bullet$] 
$\Rightarrow C \vdash_{\LDLP} \, \Rightarrow B$ and
 \item[$\bullet$] 
$V(C) \subseteq V(A) \cap V(B)$.
	\end{itemize}
\end{theorem}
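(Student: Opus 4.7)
The plan is to combine the Deduction Theorem (Theorem \ref{thm:deduction}) with the sequent-level Craig interpolation already established in Theorem \ref{thm:craig}. Suppose $\Rightarrow A \vdash_{\LDLP} \, \Rightarrow B$. Applying the direction $(i) \Rightarrow (ii)$ of Theorem \ref{thm:deduction} (with $\Gamma = \varnothing$ and $\Delta = \{B\}$), there exists a finite set $\Sigma_A$ of variants of $A$ such that $\LDL \vdash \Sigma_A \Rightarrow B$.

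Now I would feed this sequent into Theorem \ref{thm:craig}, splitting it as $\Gamma_1 = \Sigma_A$, $\Gamma_2 = \varnothing$, $\Delta = \{B\}$. This yields a formula $C \in \L$ such that $\LDL \vdash \Sigma_A \Rightarrow C$, $\LDL \vdash C \Rightarrow B$, and $V(C) \subseteq V(\Sigma_A) \cap V(B)$. A short observation settles the variable condition: every variant of $A$ is built from $A$ using only $\nabla$ and $\Box = \top \to (-)$, both of which preserve the set of atoms (since $V(\nabla D) = V(D)$ and $V(\Box D) = V(\top) \cup V(D) = V(D)$). Hence $V(\Sigma_A) \subseteq V(A)$, giving $V(C) \subseteq V(A) \cap V(B)$ as required.

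It remains to upgrade the two $\LDL$-derivations to the required $\LDLP$-entailments. For the first, $\LDL \vdash \Sigma_A \Rightarrow C$ is exactly the right-hand side of the Deduction Theorem with $\Gamma = \varnothing$ and $\Delta = \{C\}$; applying the direction $(ii) \Rightarrow (i)$ of Theorem \ref{thm:deduction} gives $\Rightarrow A \vdash_{\LDLP} \, \Rightarrow C$. For the second, since $\LDL \vdash C \Rightarrow B$ is in particular a theorem of $\LDLP$, a single application of $(cut)$ against the assumption $\Rightarrow C$ yields $\Rightarrow C \vdash_{\LDLP} \, \Rightarrow B$.

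There is no real obstacle here; the entire argument is essentially a routing of the problem through the two main theorems already proved. The only subtlety worth flagging is the variable-preservation fact for variants, which relies on the definitional choice $\Box A = \top \to A$ and on $V(\top) = \varnothing$; without this, one could not guarantee $V(\Sigma_A) \subseteq V(A)$, and the interpolant produced by Theorem \ref{thm:craig} might introduce atoms outside $V(A) \cap V(B)$.
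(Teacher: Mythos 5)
Your proposal is correct and follows essentially the same route as the paper's own proof: deduction theorem to obtain $\Sigma_A$, Craig interpolation (Theorem \ref{thm:craig}) with the split $\Gamma_1 = \Sigma_A$, $\Gamma_2 = \varnothing$, $\Delta = \{B\}$, the observation that $V(\Sigma_A) \subseteq V(A)$, and a return trip through the deduction theorem (the paper invokes Theorem \ref{thm:deduction} for both entailments where you use a direct cut for the second, but these are interchangeable). Your explicit justification of why variants preserve atoms is a welcome bit of extra care that the paper leaves implicit.
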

\begin{proof}
Let $\Rightarrow A \vdash_{\LDLP} \, \Rightarrow B$. By deduction theorem, Theorem \ref{thm:deduction}, there is a set $\Sigma_A$ of the variants of $A$ such that $\LDL \vdash \Sigma_A \Rightarrow B$. By Theorem \ref{thm:craig} for $\Gamma_1 = \Sigma_A$, $\Gamma_2 = \varnothing$ and $\Delta =\{B\}$, we get a formula $C$ such that $\LDL \vdash \Sigma_A \Rightarrow C$, $\LDL \vdash C \Rightarrow B$ and $V(C) \subseteq V(\Sigma_A) \cap V(B)$. Using Theorem \ref{thm:deduction} again,
it is easy to see that $\Rightarrow A \vdash_{\LDLP} \, \Rightarrow C$ and
$\Rightarrow C \vdash_{\LDLP} \, \Rightarrow B$. Moreover, as 
$V(C) \subseteq V(\Sigma_A) \cap V(B)$ and $V(\Sigma_A) \subseteq V(A)$, we reach $V(C) \subseteq V(A) \cap V(B)$.
\end{proof}

\bibliographystyle{plain}
\bibliography{Proof}

\begin{thebibliography}{10}

\bibitem{ImSpace}
Amirhossein Akbar~Tabatabai.
\newblock Implication via spacetime.
\newblock {\em Mathematics, Logic, and their Philosophies: Essays in Honour of Mohammad Ardeshir}, pages 161--216, 2021.

\bibitem{OnGeometric}
Amirhossein Akbar~Tabatabai.
\newblock On geometric implications.
\newblock {\em Studia Logica}, pages 1--30, 2024.

\bibitem{OGHI}
Amirhossein Akbar~Tabatabai, Majid Alizadeh, and Masoud Memarzadeh.
\newblock On a generalization of {H}eyting algebras {I}.
\newblock {\em Studia Logica}, pages 1--45, 2024.

\bibitem{OGHII}
Amirhossein Akbar~Tabatabai, Majid Alizadeh, and Masoud Memarzadeh.
\newblock On a generalization of {H}eyting algebras {II}, 2024.

\bibitem{Akin}
Ethan Akin.
\newblock The general topology of dynamical systems.
\newblock {\em Graduate Studies in Mathematics}, 2010.

\bibitem{Lattar}
Mohammad Ardeshir and Wim Ruitenburg.
\newblock Latarres, lattices with an arrow.
\newblock {\em Studia Logica}, 106(4):757--788, 2018.

\bibitem{artemov1997modal}
SN~Artemov, JM~Davoren, and A~Nerode.
\newblock Modal logics and topological semantics for hybrid systems.
\newblock 1997.

\bibitem{Celani-Jansana}
Sergio Celani and Ramon Jansana.
\newblock Bounded distributive lattices with strict implication.
\newblock {\em MLQ Math. Log. Q.}, 51(3):219--246, 2005.

\bibitem{Fer}
David Fern\'{a}ndez-Duque.
\newblock The intuitionistic temporal logic of dynamical systems.
\newblock {\em Log. Methods Comput. Sci.}, 14(3):Paper No. 3, 35, 2018.

\bibitem{Rosalie}
Rosalie Iemhoff.
\newblock On the admissible rules of intuitionistic propositional logic.
\newblock {\em J. Symbolic Logic}, 66(1):281--294, 2001.

\bibitem{Iem1}
Rosalie Iemhoff.
\newblock Preservativity logic: an analogue of interpretability logic for constructive theories.
\newblock {\em MLQ Math. Log. Q.}, 49(3):230--249, 2003.

\bibitem{Iem2}
Rosalie Iemhoff, Dick de~Jongh, and Chunlai Zhou.
\newblock Properties of intuitionistic provability and preservativity logics.
\newblock {\em Log. J. IGPL}, 13(6):615--636, 2005.

\bibitem{kremer2004small}
Philip Kremer.
\newblock A small counterexample in intuitionistic dynamic topological logic, 2004.

\bibitem{LitViss}
Tadeusz Litak and Albert Visser.
\newblock Lewis meets {B}rouwer: constructive strict implication.
\newblock {\em Indag. Math. (N.S.)}, 29(1):36--90, 2018.

\bibitem{maehara1960interpolation}
Shoji Maehara.
\newblock On the interpolation theorem of craig.
\newblock {\em Sugaku}, 12(4):235--237, 1960.

\bibitem{NuteCross}
Donald Nute.
\newblock Conditional logic.
\newblock {\em Handbook of philosophical logic, {V}ol.\ {II}, Synthese Lib.}, 165:387--439, 1984.

\bibitem{Ru}
Wim Ruitenburg.
\newblock Basic logic and fregean set theory.
\newblock {\em Dirk van Dalen Festschrift, Questiones Infinitae}, 5:121--142, 1992.

\bibitem{troelstra2000basic}
Anne~Sjerp Troelstra and Helmut Schwichtenberg.
\newblock Basic proof theory.
\newblock {\em Cambridge University Press}, (43), 2000.

\bibitem{Vi2}
Albert Visser.
\newblock {\em Aspects of Diagonalization and Provability}.
\newblock PhD thesis, University of Utrecht, 1981.

\bibitem{visser1981propositional}
Albert Visser.
\newblock A propositional logic with explicit fixed points.
\newblock {\em Studia Logica}, pages 155--175, 1981.

\end{thebibliography}

\end{document}